\title[Spinoriality]{Spinoriality of Orthogonal Representations of Reductive Groups}
\author{Rohit Joshi}
\author{Steven Spallone}
\newtheorem{corollary}{Corollary}
\newtheorem{lemma}{Lemma}
\newtheorem{prop}{Proposition}
\newtheorem{defn}{Definition}
\theoremstyle{definition}
\newtheorem{remark}{Remark}
\newtheorem{example}{Example}
\newcommand{\nc}{\newcommand}
\nc{\thm}{\theorem}
\nc{\cor}{\corollary}
\nc{\mc}{\mathcal}
\nc{\mb}{\mathbb}
\nc{\mf}{\mathfrak}
\nc{\ul}{\underline}
\nc{\ol}{\overline}
\nc{\N}{\mb N}
\nc{\R}{\mb R}
\nc{\Z}{\mb Z}
\nc{\Q}{\mb Q}
\nc{\gm}{\gamma}
\nc{\vt}{\vartheta}
\nc{\dnu}{\frac{\partial}{\partial \nu}}
\nc{\Dnu}[1]{\frac{\partial^{#1}}{\partial \nu^{#1}}}
\nc{\dmo}{\DeclareMathOperator}
\dmo{\Ker}{Ker} \dmo{\val}{val} \dmo{\ord}{ord}
 \dmo{\pr}{pr}
\dmo{\I}{I}
\dmo{\II}{II}
\dmo{\odd}{odd}
\dmo{\sgn}{sgn}
\dmo{\id}{id}
\nc{\beq}{\begin{equation*}}
\nc{\eeq}{\end{equation*}}
\nc{\half}{\frac{1}{2}}
\dmo{\Mod}{mod}
\dmo{\dyn}{dyn}
\dmo{\simp}{sc}
\dmo{\im}{im}
\dmo{\Ad}{Ad}
\dmo{\Gal}{Gal}
\dmo{\der}{der}
\dmo{\core}{core}
\dmo{\res}{res}
\dmo{\lin}{lin}
 \dmo{\Spin}{Spin}
\dmo{\Sp}{Sp}
\dmo{\SL}{SL}
\dmo{\GL}{GL}
\dmo{\SO}{SO}
\dmo{\PGL}{PGL}
\dmo{\PSO}{PSO}
\dmo{\Pin}{Pin}
\dmo{\GSp}{GSp}
\dmo{\sd}{sd}
\dmo{\orth}{orth}
\dmo{\Span}{Span}
\dmo{\scc}{sc}
\dmo{\ad}{ad}
\nc{\la}{\lambda}
 \nc{\lip}{\langle}
 \nc{\rip}{\rangle}
\dmo{\Perm}{Perm}
\dmo{\Res}{Res}
\dmo{\Ind}{Ind}
\dmo{\tr}{tr}
\dmo{\Sym}{Sym}
\dmo{\reg}{reg}
\dmo{\ch}{ch}
\dmo{\Hom}{Hom}
\dmo{\diag}{diag}
\nc{\eps}{\varepsilon}
 \newcommand{\mat}[4]{
    \begin{pmatrix}
      #1 & #2 \\
      #3 & #4
    \end{pmatrix}
}
\address{Bhaskaracharya Pratishthana 
	56/14, Erandavane, Damle Path,
Off Law College Road, Pune - 411 004,Maharashtra,India}
\email{rohitsj@students.iiserpune.ac.in}
\address{Indian Institute of Science Education and Research, Pune-411021,Maharashtra,India}
\email{sspallone@gmail.com}
\date{\today}
\keywords{reductive groups, orthogonal representations, Dynkin index, lifting criterion, Weyl dimension formula}
\subjclass[2010]{Primary 20G15, Secondary 22E46}
\begin{document}
\maketitle
 
\begin{abstract}  
 Let $G$ be a connected reductive group over a field $F$ of characteristic $0$, and $\varphi: G \to \SO(V)$ an orthogonal representation over $F$.  We give criteria to determine when  $\varphi$ lifts to the double cover $\Spin(V)$.  
 \end{abstract}

\tableofcontents

\section{Introduction}
 
 Let $G$ be a connected reductive group over a field $F$ of characteristic $0$.  Let $(\varphi,V)$ be a representation of $G$, which in this paper always means a finite-dimensional $F$-representation of $G$.  Suppose that $V$ is orthogonal, i.e., carries a symmetric nondegenerate bilinear form preserved by $\varphi$.  Thus $\varphi$ is a morphism from $G$ to $\SO(V)$.  Write $\rho: \Spin(V) \to \SO(V)$ for the usual isogeny (\cite{Veldkamp}).  Following \cite{Bou.Lie.7-9}, we say that $\varphi$ is {\it spinorial} when it lifts to $\Spin(V)$, i.e., provided there exists a morphism $\hat \varphi: G \to \Spin(V)$  so that $\varphi=\rho \circ \hat\varphi$.  We call $\varphi$ {\it aspinorial} otherwise.   
 
By an argument in Section \ref{reduction.closed}, we may assume that $F$ is algebraically closed, which we do for the rest of this introduction. Let $T$ be a maximal torus of $G$.  Write $\pi_1(G)$ for the fundamental group of $G$ (the  cocharacter group of $T$ modulo the subgroup $Q(T)$ generated by coroots), and $T_V$ for a maximal torus of $\SO(V)$ containing   $\varphi(T)$.  Then $\varphi$ induces a homomorphism $\varphi_*: \pi_1(G) \to \pi_1(\SO(V)) \cong \Z/ 2 \Z$, and $\varphi$ is spinorial iff $\varphi_*$ is trivial.  If we take a set  of cocharacters $\ul \nu=\{ \nu_1, \ldots, \nu_r\}$ whose images generate $\pi_1(G)$, then $\varphi$ is spinorial iff each cocharacter $\varphi_*\nu_i$ of $T_V$ lifts to $\Spin(V)$.   (See Section  \ref{s3}.)

Write $\mf g$ for the Lie algebra of $G$, and $X^*(T)$ for the character group of $T$.  Suppose $(\varphi,V)$ is an orthogonal representation of $G$.  Write $C$ for the  Casimir element  associated to the Killing form. Given a cocharacter $\nu$ of $T$, put
\beq
|\nu|^2=\sum_{\alpha \in R} \lip \alpha,\nu \rip^2 \in 2 \Z.
\eeq
 We introduce the integer
\beq
 p(\ul \nu)=\half \gcd \left( |\nu_1|^2, \ldots, |\nu_r|^2 \right).
\eeq

\begin{thm} \label{mthm} Suppose that $\mf g$ is simple and let $\varphi$ be an orthogonal representation of $G$. Then $\varphi$ is spinorial iff the integer
 \begin{equation} \label{theorem}
 p(\ul \nu) \cdot \frac{\tr(C,V)}{ \dim \mf g}
 \end{equation}
  is even.
 \end{thm}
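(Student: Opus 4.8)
The plan is to reduce, as recalled above, to checking that each cocharacter $\varphi_*\nu_i$ of $T_V$ lifts to $\Spin(V)$, to express the obstruction to such a lift weight-theoretically, and then to use that $\mf g$ is simple in order to identify that obstruction with $p(\ul\nu)\cdot\tr(C,V)/\dim\mf g$. Throughout we may take $F$ algebraically closed.

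First, I would establish a criterion for a single cocharacter $\mu\colon\mb{G}_m\to\SO(V)$ to lift to $\Spin(V)$. Decompose $V=\bigoplus_{j\in\Z}V_j$ into $\mu$-weight spaces; since $\mu$ preserves the form, $\dim V_j=\dim V_{-j}$, each $V_j\oplus V_{-j}$ with $j>0$ is a nondegenerate split subspace, $V_0$ is nondegenerate, and these pieces are mutually orthogonal. Pulling the isogeny $\rho$ back along $\SO(V_0)\times\prod_{j>0}\SO(V_j\oplus V_{-j})\hookrightarrow\SO(V)$ reduces the lifting problem for $\mu$ to the analogous problems on the blocks, on which $\mu$ is trivial on $V_0$ and is the cocharacter $(j,\dots,j)$ of $\SO_{2m_j}$ on $V_j\oplus V_{-j}$, where $m_j:=\dim V_j$. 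In the split even orthogonal group this cocharacter classically lifts to $\Spin_{2m_j}$ iff $j\,m_j$ is even, so, using $j\equiv j^2\pmod 2$, the cocharacter $\mu$ lifts to $\Spin(V)$ iff $\sum_{j>0}j\,m_j\equiv\half\sum_{j\in\Z}j^2\dim V_j$ is even. Applying this with $\mu=\varphi_*\nu$ and sorting the $\mb{G}_m$-weight spaces according to the value of $\lip\lambda,\nu\rip$ on the $T$-weights $\lambda$ of $V$ yields: $\varphi_*\nu$ lifts to $\Spin(V)$ iff the integer
\beq
b(\nu):=\half\sum_{\lambda\in X^*(T)}\lip\lambda,\nu\rip^{2}\dim V_\lambda
\eeq
is even.

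Next, I would rewrite $b(\nu)$ via the Casimir. Because $\mf g$ is simple, the space of invariant symmetric bilinear forms on $\mf g$ is one-dimensional, so $(X,Y)\mapsto\tr(\varphi_*(X)\varphi_*(Y),V)$ equals $c_V\kappa$ for a scalar $c_V$ and the Killing form $\kappa$ (this $c_V$ is, up to a fixed normalisation, the Dynkin index of $V$). Summing over a pair of $\kappa$-dual bases of $\mf g$ turns the left-hand side into $\tr(C,V)$ and the right-hand side into $c_V\dim\mf g$, whence $c_V=\tr(C,V)/\dim\mf g$. Evaluating the two forms at $H_\nu:=d\nu(1)$: since $\varphi_*(H_\nu)$ acts on $V_\lambda$ by $\lip\lambda,\nu\rip$, the left side is $\sum_\lambda\lip\lambda,\nu\rip^2\dim V_\lambda=2\,b(\nu)$, while $\ad(H_\nu)$ acts on $\mf g_\alpha$ by $\lip\alpha,\nu\rip$ and by $0$ on $\mathrm{Lie}(T)$, so $\kappa(H_\nu,H_\nu)=\sum_{\alpha\in R}\lip\alpha,\nu\rip^2=|\nu|^2$. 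Therefore
\beq
b(\nu)=\frac{|\nu|^{2}}{2}\cdot\frac{\tr(C,V)}{\dim\mf g}
\eeq
for every cocharacter $\nu$ of $T$; in particular the right-hand side is always an integer.

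Finally, set $r=\tr(C,V)/\dim\mf g$ and $n_i=|\nu_i|^2/2\in\Z$. By the previous two steps, $\varphi$ is spinorial iff $n_i r$ is even for all $i$. Since $\gcd(2n_1,\dots,2n_r)=2\gcd(n_1,\dots,n_r)$ we have $p(\ul\nu)=\gcd(n_1,\dots,n_r)$; a Bézout relation $p(\ul\nu)=\sum_i k_i n_i$ with $k_i\in\Z$ shows $p(\ul\nu)\cdot r=\sum_i k_i(n_i r)\in\Z$, which is even if every $n_i r$ is even, and conversely each $n_i r$ is an integer multiple of $p(\ul\nu)\cdot r$ (as $p(\ul\nu)\mid n_i$) and hence even once $p(\ul\nu)\cdot r$ is. Thus $\varphi$ is spinorial iff the integer $p(\ul\nu)\cdot r$ is even, as claimed. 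The step I expect to be the main obstacle is the first one: one must verify that lifting to $\Spin(V)$ is genuinely detected on the split blocks — that is, understand the preimage of $\SO(V_0)\times\prod_{j>0}\SO(V_j\oplus V_{-j})$ inside $\Spin(V)$ — and nail down the parity in the base case $\SO_{2m}$. Once that is in place, the Casimir identity and the gcd bookkeeping are routine.
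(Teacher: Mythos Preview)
Your proof is correct, and your route to the key identity $b(\nu)=\tfrac{|\nu|^2}{2}\cdot\tfrac{\tr(C,V)}{\dim\mf g}$ is genuinely different from, and considerably shorter than, the paper's. The paper first reduces to irreducible constituents via the decomposition $\varphi\cong S(\sigma)\oplus\bigoplus_j\varphi_{\la_j}$, then computes $q_{\la}(\nu)$ for each irreducible $\varphi_\la$ by applying $\eps\circ\partial_\nu^{N+2}$ to Weyl's character formula and invoking the structure of anti-$W$-invariant polynomials on $\mf t$ (Propositions~\ref{bugs} and~\ref{F_k}, Proposition~\ref{this.cor}); only after summing these does it arrive at $\tfrac{|\nu|^2}{2}\cdot\tfrac{\tr(C,V)}{\dim\mf g}$. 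Your argument bypasses all of this: the one-dimensionality of $\mf g$-invariant symmetric forms on a simple $\mf g$ immediately gives $\tr_V(\varphi_*X\,\varphi_*Y)=c_V\,\kappa(X,Y)$, and evaluating once on $\kappa$-dual bases and once at $H_\nu$ pins down both $c_V$ and $b(\nu)$. This is much more direct and never needs $\varphi$ to be decomposed. The trade-off is that the paper's longer computation produces the per-factor formula of Proposition~\ref{ss.Qnu}, which is what it needs for the non-simple case (Corollary~\ref{irred.not.simple}, Theorem~\ref{general.lifting}) and for the explicit examples in later sections; your shortcut proves Theorem~\ref{mthm} but does not generalize beyond simple $\mf g$.

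Two smaller remarks. First, your derivation of the mod-$2$ criterion via the block decomposition of $V$ is also different from the paper's: the paper obtains $L_\varphi(\nu)=\lip\omega_\Sigma,\varphi_*\nu\rip$ directly from the explicit description of $\tilde T_V$ (Lemma~\ref{blue.shirt}, Proposition~\ref{L.phi.crit}) and then uses the palindromy trick $L_\varphi(\nu)\equiv q_\varphi(\nu)\bmod 2$ (Proposition~\ref{jj.s2.not.good}) to reach your $b(\nu)$. Your route is fine, but the concern you flag about gluing the spin covers of the blocks inside $\Spin(V)$ is exactly where care is needed: the preimage of $\prod_k\SO(W_k)$ in $\Spin(V)$ is the quotient of $\prod_k\Spin(W_k)$ identifying all the central $z$'s, so the individual parities add rather than having to vanish separately --- which is precisely what gives $\sum_{j>0}j\,m_j\bmod 2$. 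Second, your B\'ezout step making explicit that $p(\ul\nu)\cdot r\in\Z$ and that evenness passes back and forth is more careful than the paper's ``equivalently'' at the end of its proof.
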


\bigskip

Alternatively, this can be  reformulated in terms of the Dynkin index `$\dyn(\varphi)$' of $\varphi$ and the dual Coxeter number $\check h$ of $\mf g$. (We recall these integers in Section \ref{s7}.)
   
\begin{cor} \label{berg} Suppose $\mf g$ is simple and let $\varphi$ be an orthogonal representation of $G$.   Then $\varphi$ is spinorial iff the integer
\beq
 p(\ul \nu) \cdot  \frac{\dyn(\varphi)}{2\check h}
\eeq
is even.
\end{cor}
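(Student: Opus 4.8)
The plan is to deduce Corollary \ref{berg} from Theorem \ref{mthm} by identifying the two relevant rational numbers: I will show
\[
\frac{\tr(C,V)}{\dim \mf g} \;=\; \frac{\dyn(\varphi)}{2 \check h}.
\]
Granting this, the quantities $p(\ul\nu)\cdot \tfrac{\tr(C,V)}{\dim\mf g}$ and $p(\ul\nu)\cdot\tfrac{\dyn(\varphi)}{2\check h}$ are literally the same number; in particular the second is an integer and is even exactly when the first is, so Theorem \ref{mthm} yields the corollary verbatim.

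To prove the displayed identity I would unwind the definitions recalled in Section \ref{s7}. Let $(\cdot,\cdot)$ denote the basic invariant symmetric bilinear form on $\mf g$ (normalized so that $(\theta,\theta)=2$ for a highest root $\theta$). Since $\mf g$ is simple, every invariant symmetric bilinear form is a scalar multiple of $(\cdot,\cdot)$; the Dynkin index is the scalar with $\tr(\varphi(X)\varphi(Y)) = \dyn(\varphi)\,(X,Y)$, and the dual Coxeter number is characterized by $\kappa(X,Y) = 2\check h\,(X,Y)$, where $\kappa$ is the Killing form (equivalently $\dyn(\ad)=2\check h$). Now choose a basis $\{e_i\}$ of $\mf g$ with $\kappa$-dual basis $\{e^i\}$, so that $C = \sum_i e_i e^i$ in $U(\mf g)$. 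Then
\[
\tr(C,V) \;=\; \sum_i \tr\bigl(\varphi(e_i)\varphi(e^i)\bigr)
\;=\; \dyn(\varphi)\sum_i (e_i,e^i)
\;=\; \frac{\dyn(\varphi)}{2\check h}\sum_i \kappa(e_i,e^i)
\;=\; \frac{\dyn(\varphi)}{2\check h}\,\dim\mf g,
\]
using $\sum_i \kappa(e_i,e^i) = \dim\mf g$ in the last step (the trace of the identity written in dual bases). Dividing by $\dim\mf g$ gives the claim. One should also remark that $\dyn$ is additive over direct sums, so this applies to $\varphi$ even when $V$ is reducible.

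I do not expect a serious obstacle here: the entire mathematical content is in Theorem \ref{mthm}, and the corollary is a translation of normalizations. The only point demanding care is making the conventions for $(\cdot,\cdot)$, $\dyn(\varphi)$, and $\check h$ in Section \ref{s7} consistent with each other, so that the scalar comparisons $\tr(\varphi(X)\varphi(Y)) = \dyn(\varphi)(X,Y)$ and $\kappa = 2\check h\,(\cdot,\cdot)$ hold with the \emph{same} form $(\cdot,\cdot)$; once that is pinned down, the computation above is a one-line manipulation with dual bases.
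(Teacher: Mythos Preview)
Your proposal is correct and is essentially the paper's own argument: the corollary follows from Theorem \ref{mthm} together with the identity $\dyn(\varphi)=2\check h\cdot \tr(C,V)/\dim\mf g$, which is exactly Theorem \ref{dynkin.thm} in Section \ref{s7}. The only difference is that the paper quotes this identity from Dynkin's work, whereas you supply the short dual-basis derivation; your caveat about matching the normalizations of $(\cdot,\cdot)$, $\dyn$, and $\check h$ is well placed, since the paper's conventions in Section \ref{s7} differ superficially from yours (the paper scales the Killing form by $2\check h$ rather than dividing), but the Dynkin index, being a ratio of invariant forms, is unaffected.
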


\bigskip
 
If $\la \in X^*(T)$ is dominant, write   $\varphi_\la$ for the irreducible representation with  highest weight $\la$.    As $\la$ varies, we may regard \eqref{theorem} as a integer-valued polynomial in $\la$. We show that the ``spinorial weights" form a periodic subset of the highest weight lattice.  To be more precise, let $X_{\orth}^+ \subset X^*(T)$ be the set of highest weights of irreducible orthogonal representations.  
 
 \begin{thm} \label{period.intro}  There is a $k \in \N$ so that for all $\la_0,\la \in X_{\orth}^+$, the representation $\varphi_{\la_0}$ is spinorial iff $\varphi_{\la_0+2^k \la}$ is spinorial.
\end{thm}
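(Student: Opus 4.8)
The plan is to derive the result from the criterion of Theorem \ref{mthm} together with the elementary fact that integer-valued polynomials are ``$2$-adically periodic''. Assume first that $\mf g$ is simple. For $\la\in X_{\orth}^+$, Theorem \ref{mthm} asserts that $\varphi_\la$ is spinorial precisely when the integer $P(\la):=p(\ul\nu)\cdot\tr(C,\varphi_\la)/\dim\mf g$ is even. The Casimir $C$ acts on the irreducible $\varphi_\la$ by a scalar $\kappa(\la)$ which is a quadratic polynomial in $\la$ with rational coefficients, and $\dim\varphi_\la$ is a polynomial in $\la$ of degree $|R^+|$ by the Weyl dimension formula; hence $P$ is the restriction to $X_{\orth}^+$ of a $\Q$-coefficient polynomial on $X^*(T)$ of degree at most $d:=|R^+|+2$ which takes integer values on $X_{\orth}^+$. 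It therefore suffices to produce $k\in\N$ with $P(\la_0+2^k\la)\equiv P(\la_0)\pmod 2$ for all $\la_0,\la\in X_{\orth}^+$.

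Recall that $X_{\orth}^+$ is a finitely generated submonoid of $X^*(T)$: it is the intersection of the dominant cone with the subgroup of self-dual weights on which the (multiplicative) Frobenius--Schur indicator is trivial, and such an intersection of a lattice with a rational polyhedral cone is finitely generated by Gordan's lemma. In particular $\la_0+2^k\la\in X_{\orth}^+$ whenever $\la_0,\la\in X_{\orth}^+$. Pick monoid generators $\mu_1,\dots,\mu_s$ of $X_{\orth}^+$ and let $\Phi\colon\Z^s\to X^*(T)$ be the homomorphism sending $e_i$ to $\mu_i$, so that $\Phi(\Z_{\ge0}^s)=X_{\orth}^+$. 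Then $Q:=P\circ\Phi$ is a $\Q$-coefficient polynomial on $\Z^s$ that is integer-valued on $\Z_{\ge0}^s$; since the $\Z$-module of such polynomials is freely spanned by the products of binomial-coefficient polynomials $\prod_i\binom{x_i}{k_i}$, which are in fact integer-valued on all of $\Z^s$, so is $Q$, and hence so are all of its iterated forward differences.

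Now choose $N$ with $2^N>d$ and set $k:=N$. For each coordinate $j$, Newton's forward-difference formula gives
\[
 Q(c+2^N e_j)-Q(c)=\sum_{m=1}^{d}\binom{2^N}{m}\,(\Delta_j^m Q)(c),
\]
where $\Delta_j$ denotes the difference operator in the $j$-th variable. As $(1+x)^{2^N}\equiv 1+x^{2^N}$ in $\mathbf F_2[x]$, every $\binom{2^N}{m}$ with $1\le m\le 2^N-1$ is even (and here $m\le d<2^N$), so the right-hand side vanishes modulo $2$; iterating over all $s$ coordinates and in both directions gives $Q(c+2^N c')\equiv Q(c)\pmod 2$ for all $c,c'\in\Z^s$. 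Writing $\la_0=\Phi(a)$ and $\la=\Phi(b)$ with $a,b\in\Z_{\ge0}^s$, we obtain $\la_0+2^k\la=\Phi(a+2^k b)\in X_{\orth}^+$ and $P(\la_0+2^k\la)=Q(a+2^k b)\equiv Q(a)=P(\la_0)\pmod 2$, so by Theorem \ref{mthm} $\varphi_{\la_0}$ is spinorial if and only if $\varphi_{\la_0+2^k\la}$ is. For a general connected reductive $G$ one argues in the same way with $P$ replaced by the integer-valued polynomial controlling spinoriality that is assembled from the simple factors of $\mf g$ and the central torus, using that the obstruction $\varphi_*\colon\pi_1(G)\to\Z/2\Z$ is additive over direct summands.

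The genuinely elementary input is the $2$-adic vanishing of $\binom{2^N}{m}$; the point that requires care is rather the preliminary structure: that $X_{\orth}^+$ is a finitely generated monoid (so that $\Phi$ exists and $\la_0+2^k\la$ stays an orthogonal weight), that the spinoriality criterion of Theorem \ref{mthm} really is polynomial in $\la$, and that a polynomial integer-valued on $\Z_{\ge0}^s$ is integer-valued on all of $\Z^s$.
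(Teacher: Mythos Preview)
Your argument is correct and follows the same overall strategy as the paper: reduce spinoriality to the parity of an integer-valued polynomial in $\la$, then exploit $2$-adic periodicity of such polynomials. The implementations differ in two places. First, to pass from ``integer-valued on the dominant orthogonal weights'' to ``integer-valued on a full lattice'', the paper works directly on $X_{\orth}$ via an ad hoc geometric lemma (Proposition~\ref{rohitsproof}) showing that any point of the lattice lies on an arithmetic progression eventually entering the cone; you instead invoke Gordan's lemma to write $X_{\orth}^+=\Phi(\Z_{\ge 0}^s)$ and pull back to $\Z^s$, where the standard binomial-basis fact immediately gives integrality on all of $\Z^s$. Your route is cleaner and more robust (it does not require the cone to be simplicial). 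Second, for the periodicity itself the paper expands in the basis $\prod_i\binom{f_i}{n_i}$ and applies the Lucas congruence, whereas you use Newton's forward-difference expansion together with the divisibility of $\binom{2^N}{m}$; these are interchangeable.

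For general reductive $G$ your last paragraph is a bit too telegraphic. The paper handles this by packaging the several conditions ``$q_\la(\nu)$ even for each $\nu\in\ul\nu$'' into a \emph{single} integer-valued polynomial
\[
\eta_{\ul\nu}(\la)=1+\prod_{\nu\in\ul\nu}\bigl(q_\la(\nu)-1\bigr),
\]
which is even iff every $q_\la(\nu)$ is (Corollary~\ref{Qnugen.even} and Proposition~\ref{ss.Qnu} supply integrality and the explicit polynomial form). Your phrase ``additive over direct summands'' does not quite capture this; a cleaner fix, entirely in your spirit, is simply to run your argument for each of the finitely many polynomials $q_\la(\nu)$, $\nu\in\ul\nu$, and take $k$ to be the maximum of the resulting exponents.
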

 
\bigskip
 
 For any representation $\varphi$, one can form an orthogonal representation $S(\varphi)=\varphi \oplus \varphi^\vee$.  When $G$ is semisimple, $S(\varphi)$ is always spinorial.  For the reductive case  we have:

\begin{thm}  \label{s.phi.thm} $S(\varphi_\la)$ is spinorial iff the integers
\beq
\lip \la, \nu^z \rip \cdot \dim V_{\la}
\eeq
are even for all   $\nu \in \ul \nu$.
\end{thm}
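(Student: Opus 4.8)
The plan is to apply the cocharacter reduction recalled in the introduction together with the single‑cocharacter lifting criterion of Section \ref{s3}. By the former, $S(\varphi_\la)$ is spinorial iff, for each $\nu \in \ul\nu$, the composite $\mathbb G_m \xrightarrow{\nu} T \to \SO(W)$ lifts to $\Spin(W)$, where $W := S(\varphi_\la) = V_\la \oplus V_\la^\vee$; by the latter, this holds iff $\sum_{k>0} k\cdot\dim W[k]$ is even, where $W=\bigoplus_k W[k]$ is the decomposition into $\nu$‑weight spaces. So the whole statement reduces to computing this parity for each $\nu$.

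First I would unwind the weight decomposition. Let $m(k)=\dim (V_\la)[k]$ be the multiplicity of the $\nu$‑weight $k$ in $V_\la$; then $V_\la^\vee$ has $\nu$‑weight multiplicities $m(-k)$, so $\dim W[k]=m(k)+m(-k)$ and
\[
\sum_{k>0} k\cdot\dim W[k] \;=\; \sum_{k>0} k\,\big(m(k)+m(-k)\big) \;=\; \sum_{k\neq 0} |k|\,m(k).
\]
Because $\sum_{k\neq 0}|k|\,m(k)-\sum_{k}k\,m(k)=2\sum_{k<0}|k|\,m(k)$ is even, this has the same parity as $\sum_{k}k\,m(k)=\langle\Sigma_\la,\nu\rangle$, where $\Sigma_\la:=\sum_{\mu}\dim (V_\la)[\mu]\cdot\mu\in X^*(T)$ is the sum‑of‑weights character of $V_\la$ (equivalently, the character of $G$ on $\det V_\la$).

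Next I would rewrite $\langle\Sigma_\la,\nu\rangle$ as $\langle\la,\nu^z\rangle\cdot\dim V_\la$, where $\nu^z$ is the central component of $\nu$, i.e.\ the image of $\nu$ under the projection $X_*(T)\otimes\Q\to (X_*(T)\otimes\Q)^W$. Since the weight multiset of $V_\la$ is $W$‑stable, $\Sigma_\la$ is $W$‑invariant and hence annihilates every coroot; as $X_*(T)\otimes\Q$ is the $W$‑stable direct sum of $(X_*(T)\otimes\Q)^W$ and the span of the coroots, this gives $\langle\Sigma_\la,\nu\rangle=\langle\Sigma_\la,\nu^z\rangle$. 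On the other hand every weight of $V_\la$ is congruent to $\la$ modulo the root lattice, so $\Sigma_\la-(\dim V_\la)\,\la$ lies in the root lattice, which $\nu^z$ (being $W$‑invariant) annihilates; hence $\langle\Sigma_\la,\nu^z\rangle=(\dim V_\la)\langle\la,\nu^z\rangle$. Chaining the three displays, $S(\varphi_\la)$ is spinorial iff $(\dim V_\la)\langle\la,\nu^z\rangle$ is even for all $\nu\in\ul\nu$ — this product being the integer $\langle\Sigma_\la,\nu\rangle$, even though $\langle\la,\nu^z\rangle$ alone need not be integral.

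None of the steps is hard; the only point needing care is that the semisimple part of $\nu$ contributes nothing, which is exactly the content of the two orthogonality observations above ($W$‑invariant characters kill coroots; $\nu^z$ kills roots) together with the mod‑$2$ coincidence of $\sum|k|\,m(k)$ and $\sum k\,m(k)$. For a more conceptual phrasing one may note that $\nu$ factors through the Lagrangian embedding $\GL(V_\la)\hookrightarrow\SO(V_\la\oplus V_\la^\vee)$, along which $\Spin$ pulls back to the connected double cover of $\GL(V_\la)$ attached to $2\Z\subset\Z=\pi_1(\GL(V_\la))$; so $\nu$ lifts iff the character $\det\circ\varphi_\la\circ\nu$ of $\mathbb G_m$ is a square, i.e.\ iff $\langle\Sigma_\la,\nu\rangle$ is even.
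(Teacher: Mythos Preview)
Your argument is correct. The first half---reducing spinoriality of $S(\varphi_\la)$ to the parity of $\langle\Sigma_\la,\nu\rangle=\sum_\mu m_\la(\mu)\langle\mu,\nu\rangle$---matches the paper's Proposition \ref{l.vs.s(nu)} (there the quantity is called $s_\varphi(\nu)$, and the mod-$2$ manipulation is the same). The second half, identifying $\langle\Sigma_\la,\nu\rangle$ with $\langle\la,\nu^z\rangle\dim V_\la$, is where you diverge. The paper interprets $s_\varphi(\nu)$ as $\tr d\varphi(d\nu(1))$, then factors the trace through the projection $\mf t\to\mf z$ and invokes Schur's Lemma to see that $\tr d\varphi(z)=d\la(z)\cdot\dim V_\la$ for $z\in\mf z$. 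Your route is purely combinatorial: $\Sigma_\la$ is $W$-invariant hence kills coroots, so only $\nu^z$ survives; and $\Sigma_\la-(\dim V_\la)\la$ lies in the root lattice, which $\nu^z$ kills. Your approach avoids Lie algebras and Schur entirely and makes transparent why the derived part of $\nu$ is irrelevant; the paper's approach is shorter once one is willing to differentiate, and connects the answer to the determinant character $\det\circ\varphi_\la$ more directly (which you recover in your final remark on the Lagrangian embedding).
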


\bigskip

In this formula, $\nu^z$ is the $\mf z$-component of $\nu$ corresponding to the decomposition $\mf g=\mf g' \oplus \mf z$, where $\mf z$ is the center of $\mf g$ and $\mf g'$ is the derived algebra of $\mf g$.   
  
This paper is organized as follows.   Section \ref{s2} establishes general notation and  Section \ref{tori.spin.section} sets up preliminaries for the spin groups.     In Section \ref{s3} we give a criterion for spinoriality in terms of the weights of $\varphi$.   This approach is along the lines of \cite{Prasad.Ramakrishnan} and   \cite{Bou.Lie.7-9}.   

We advance the theory in  Section \ref{s4} by employing an algebraic trick involving palindromic Laurent polynomials; this gives a lifting condition in terms of the integers
\beq
q_\varphi(\nu)= \half \cdot \frac{d^2}{dt^2} \Theta_\varphi(\nu(t))|_{t=1}
\eeq
for  $\nu \in \ul \nu$.  Here $\Theta_\varphi$ denotes the character of $\varphi$.

 In Section \ref{s5}   we compute $q_\varphi(\nu)$ for $\varphi$ irreducible, essentially by taking two derivatives of Weyl's Character Formula.  As a corollary we show that every nonabelian reductive group has a nontrivial spinorial irreducible representation.   Section \ref{reducible.rep.section}  works out the case of reducible orthogonal representations, in particular we prove Theorems \ref{mthm} and  \ref{s.phi.thm}.  
     In Section \ref{s7} we explain the connection with the Dynkin index.    Spinoriality for tensor products is understood in Section \ref{tensor.section}.  
 
 The next four sections apply our theory to groups $G$  with $\mf g$ simple. Our goal is to answer the question: For which such $G$ is every orthogonal representation spinorial?
 Section  \ref{a.section} covers quotients of $\SL_n$, Section \ref{c.section} covers type $C_n$, Section \ref{d.section} covers type $D_n$, and Section \ref{summary.section} presents the final answer to the question.
 
  In Section \ref{s10} we prove Theorem \ref{period.intro}, the periodicity of the spinorial weights.  Finally, in Section \ref{reduction.closed} we reduce to the case of $F$ algebraically closed.

 \bigskip

{\bf Acknowledgements:} We would like to thank Dipendra Prasad for his interest and useful conversations, and Raghuram and Jeff Adler for helpful suggestions.  This paper comes out of the first author's Ph.D. thesis \cite{Rohit.thesis} at IISER Pune, during which he was supported by an Institute Fellowship. Afterwards he was supported by a fellowship from Bhaskaracharya Pratishthan. We would like to thank the  referee for useful  comments and suggetions on an earlier version of this manuscript.

 \section{Preliminaries} \label{s2} 
 
 \subsection{Notation}
Throughout this paper $G$ is a connected reductive algebraic group over $F$ with Lie algebra $\mf g$.  Until the final section, $F$ is algebraically closed.
Write $\mf g'$ for the derived algebra of $\mf g$.  We write $T$ for a maximal torus of $G$, with  Lie algebra $\mf t$ and Weyl group $W$.  Put $\mf t'=\mf t \cap \mf g'$.  Let $\sgn: W \to \{ \pm 1 \}$ be the usual sign character of $W$. As in \cite{Springer}, let $(X^*,R,X_*,R^{\vee})$ be the root datum associated to $G$, 
 
The groups $X^*=X^*(T)=\Hom(T,\mb G_m)$ and $X_*=X_*(T)=\Hom(\mb G_m,T)$ are the character and cocharacter lattices of $T$.    
One has injections $X^* \hookrightarrow \mf t^*$ and $X_* \hookrightarrow \mf t$ given by differentiation for the former, and $\nu \mapsto d\nu(1)$ for the latter.  We will often   identify $X^*, R, X_*$, and $R^\vee$ with their images under these injections.    
Let $Q(T) \subseteq X_*(T)$ be the group generated by the coroots of $T$ in $G$.  
Write $R^+$ for a set of positive roots of $T$ in $G$, and $\delta \in \mf t^*$ for the half-sum of these positive roots.     Let $w_0 \in W$ denote the longest Weyl group element.

For $\la,\la' \in X^*(T)$, we write   $\la' \prec \la$ when $\la-\la'$ is a nonnegative combination of positive roots.

  In this paper all representations $V$ of $G$ are finite-dimensional $F$-representations, equivalently morphisms $\varphi: G \to \GL(V)$ of algebraic groups.  
For $\mu \in X^*(T)$, write $V^\mu$ for the $\mu$-eigenspace of $V$, and put $m_\varphi(\mu)=\dim V^{\mu}$, the multiplicity of $\mu$ as a weight of $V$.  

If $H$ is an algebraic group, write $H^\circ$ for the connected component of the identity.
We frequently write $\diag(t_1, t_2, \ldots, t_n)$ for the $n \times n$ matrix with the given elements as entries.

\subsection{Pairings} \label{pairings.section}

 Write $\lip \: , \:\rip_T: X^*(T) \times X_*(T) \to \Z$ for the pairing 
 \beq
 \lip \mu,\nu \rip_T=n \Leftrightarrow \mu(\nu(t))=t^n  
 \eeq 
 for $t \in F^\times$, and $\lip \:,\:\rip_{\mf t}: \mf t^* \times \mf t$ for the natural pairing.  Note that for $\mu \in X^*(T)$ and $\nu \in X_*(T)$, we have
\beq
\lip d\mu,d \nu(1) \rip_{\mf t}=\lip \mu,\nu \rip_T.
\eeq
 So we may  drop the subscripts and simply write `$\lip \mu,\nu \rip$'.

Write $( \:,\:)$ for the Killing form of $\mf g$ restricted to $\mf t$; it may be computed by
 \beq
 (x,y)=\sum_{\alpha \in R} \alpha(x)\alpha(y),
 \eeq 
for $x,y \in \mf t$.  Also set $|x|^2=(x,x)$.  In particular, for $\nu \in X_*(T)$ we have $|\nu|^2=\sum_{\alpha \in R} \lip \alpha,\nu \rip^2$.  The Killing form restricted further to $\mf t'$ induces an isomorphism $\sigma: (\mf t')^* \cong  \mf t'$.  We use the same notation `$( \:, \:)$' to denote the inverse form on $(\mf t')^*$ defined for $\mu_1,\mu_2 \in \mf t'$ by
 \beq
 (\mu_1,\mu_2)=(\sigma(\mu_1),\sigma(\mu_2)).
 \eeq
 In  \cite{Bou.Lie.4-6} this form on $(\mf t')^*$ is called the ``canonical bilinear form" $\Phi_R$.
  Write $|y|^2=(y,y)$ for  $y \in (\mf t')^*$.  
  
  Let $\pi_1(G)=X_*(T)/Q(T)$. As in the introduction, fix a set $\ul \nu=\{\nu_1, \ldots, \nu_r\}$ of cocharacters whose images generate $\pi_1(G)$, 
and put
\beq
 p(\ul \nu)=\half \gcd \left( |\nu_1|^2, \ldots, |\nu_r|^2 \right).
\eeq

 Often $\ul \nu$ will be a singleton $\{ \nu_0 \}$, in which case we may simply write 
 \beq
 p(\nu_0)=p(\{\nu_0\})=\half |\nu_0|^2.
 \eeq

\subsection{Orthogonal Representations}

Let $X^*(T)^+$ be the set of dominant characters, i.e., the $\lambda \in X^*(T)$ so that $\lip \lambda, \alpha^\vee \rip \geq 0$ for all $\alpha \in R^+$.

 Put 
\beq
X_{\sd}=\{ \la \in X^*(T) \mid w_0 \la=-\la \}
\eeq
 and 
\beq
X_{\orth}=\{ \la \in X_{\sd} \mid \lip \la,2\delta^\vee \rip \text{ is even} \},
\eeq
and use the superscript `$+$' to denote the dominant members of these sets.
  According to \cite{Bou.Lie.7-9}, $X_{\sd}^+$ is the set of highest weights of irreducible self-dual representations, and $X_{\orth}^+$ is the set of highest weights of irreducible orthogonal representations.

For $\la \in X^*(T)^+$, the quantity 
\beq
|\lambda+\delta|^2-|\delta|^2=(\lambda,\lambda+2\delta)
\eeq
 is equal to  $\chi_\la(C)$, the value of the central character of the irreducible representation $\varphi_\la$ at the Casimir element $C$. (See \cite{KRV}.)

\subsection{Tori of Spin Groups} \label{tori.spin.section}

In this section we recall material about the tori of spin groups. Our reference is Section 6.3 of  \cite{Goodman.Wallach}.

For the even-dimensional case, let $V$ be a vector space with basis $(e_1, \ldots, e_n, e_{-n},\ldots, e_{-1})$.
For the odd-dimensional case, use the basis $(e_1, \ldots, e_n, e_0, e_{-n},\ldots, e_{-1})$. In either case, give $V$ the symmetric bilinear form $( \:,\:)$ so that $(e_i,e_{-i})=1$ and $(e_i,e_j)=0$ for $j \neq -i$.

Let $C(V)$ be the corresponding Clifford algebra, i.e., the quotient of the tensor algebra of $V$ by the relation 
\beq
v \otimes w+w \otimes v=(v,w).
\eeq

Let $\Pin(V)$ denote the subgroup of the invertible elements of $C(V)$, generated by the unit vectors in $V$. The morphism $\rho: \Pin(V) \to O(V)$ taking each unit vector to the corresponding reflection of $V$ is a double cover. Then $\Spin(V)=\Pin(V)^\circ$ is the inverse image of $\SO(V)$ under $\rho$.

 For $1 \leq j \leq n$, let $c_j(t)=te_j e_{-j}+t^{-1}e_{-j}e_j \in \Spin(V)$. This gives a morphism $c_j: \mb G_m \to \Spin(V)$. Define $c: \mb G_m^n \to \Spin(V)$ by 
  \beq
  c(t_1, \ldots, t_n)=c_1(t_1) \cdots c_n(t_n).
\eeq
The kernel of $c$ is 
\beq
\{(t_1, \ldots, t_n) \mid t_i= \pm 1, t_1 \cdots t_n=1\},
\eeq
and the image of $c$ is a maximal torus $\tilde T_V$ of $\Spin(V)$. 
The image of $\tilde T_V$ under $\rho$ is the subgroup of diagonal matrices in $\SO(V)$, relative to the basis of $V$ mentioned above.
More precisely, the restriction of $\rho$ to $\tilde T_V$ may be described by
\beq
\rho(c(t_1, \ldots, t_n))=\begin{cases} \diag(t_1^2, \ldots, t_n^2, t_n^{-2}, \ldots, t_1^{-2}) \\
 \diag(t_1^2, \ldots, t_n^2, 1, t_n^{-2}, \ldots, t_1^{-2}), \\
 \end{cases}
\eeq
depending on whether $\dim V=2n$ or $2n+1$.

The kernel of $\rho$ is generated by $z=c(-1,1,\ldots, 1)=-1 \in C(V)$. Pick $\sqrt{-1} \in F$, and put $c^+=c(\sqrt{-1},\sqrt{-1},\ldots, \sqrt{-1})$. Then $(c^+)^2=z^n$.

We now describe the center $Z$ of $\Spin(V)$.
\begin{enumerate}
\item When $\dim V=2n+1$, $Z$ is generated by $z$.  
\item When $\dim V=2n$, with $n$ odd, $Z$ is cyclic of order $4$, generated by $c^+$.
\item When $\dim V=2n$ with $n$ even, $Z$  is a Klein $4$-group generated by $z$ and $c^+$.
\end{enumerate}

 Define $\vt_i \in X^*(T_V)$  by
 \beq
\vt_i:\diag(t_1, \ldots, t_n, \ldots) \mapsto t_i.
\eeq
We identify $X^*(T_V)$  with $\Z^n$  through the bijection $\sum_i a_i \vt_i   \leftrightarrow (a_1, \ldots, a_n)$, and $X_*(T_V)$ with $\Z^n$ by $\nu \leftrightarrow (b_1, \ldots, b_n)$ when $\nu(t)=\diag(t^{b_1}, \ldots, t^{b_n}, \ldots)$.

Let $\Sigma$ be a set of weights formed by taking one representative from each  pair $\{ \vt_i,-\vt_i\}$.
 Then $\Sigma$ is a $\Z$-basis of $X^*(T_V)$. Of course, one choice is  $\Sigma_*=\{ \vt_1, \ldots, \vt_n \}$. Put $\omega_{\Sigma} = \sum_{\omega \in \Sigma} \omega$.

\begin{lemma} \label{blue.shirt} Let $d$ be a positive even integer, and $\zeta_d \in F^\times$ a primitive $d^{th}$ root of unity. Let $\nu \in X_*(T_V)$.
\begin{enumerate}
\item $\nu$ lifts to a cocharacter $\tilde \nu \in X_*(\tilde T_V) \Leftrightarrow \lip \omega_\Sigma,\nu \rip$ is even.
\item $\nu(\zeta_d)=1 \Leftrightarrow d \mid \lip \vt_i,\nu \rip$ for all $i$.
\item Assume  the conditions in (1) and (2) above. Then $\tilde \nu(\zeta_d)=1 \Leftrightarrow 2d \mid \lip \omega_\Sigma,\nu \rip$.
\end{enumerate}
\end{lemma}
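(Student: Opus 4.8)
The plan is to make the lattices of $\tilde T_V$ explicit from the description of $\ker c$ and then read off all three claims. Since $\tilde T_V = \mb G_m^n/\ker c$ with $\ker c = \{(\eps_1,\dots,\eps_n) : \eps_i = \pm 1,\ \eps_1\cdots\eps_n = 1\}$, a character $(a_1,\dots,a_n)$ of $\mb G_m^n$ descends to $\tilde T_V$ precisely when $\prod_i\eps_i^{a_i}=1$ for all such $(\eps_i)$, i.e.\ iff all the $a_i$ share a common parity; dualizing then gives $X_*(\tilde T_V) = \{v\in\tfrac12\Z^n : \sum_i v_i\in\Z\}$. Because $\rho(c(t_1,\dots,t_n)) = \diag(t_1^2,\dots)$, the map $\rho^*$ sends $\vt_i$ to $2\vt_i$, so its adjoint $\rho_*\colon X_*(\tilde T_V)\to X_*(T_V)$ is multiplication by $2$; hence $\im\rho_* = \{w\in\Z^n : \sum_i w_i\text{ even}\}$, a cocharacter $\nu$ with $\lip\vt_i,\nu\rip = a_i$ lifts iff $\sum_i a_i$ is even, and the lift $\tilde\nu = \tfrac12\nu$ is then unique. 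Writing $\Sigma = \{\eps_1\vt_1,\dots,\eps_n\vt_n\}$ with each $\eps_i = \pm 1$, we have $\lip\omega_\Sigma,\nu\rip = \sum_i\eps_i a_i\equiv\sum_i a_i\pmod 2$, which gives (1). Claim (2) is immediate: $\nu(\zeta_d) = \diag(\zeta_d^{a_1},\dots)$ is the identity iff $\zeta_d^{a_i}=1$ for every $i$, i.e.\ iff $d\mid a_i = \lip\vt_i,\nu\rip$, since $\zeta_d$ has exact order $d$.

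For (3), assume (1) and (2), and keep $a_i = \lip\vt_i,\nu\rip$. By (2), $d\mid a_i$, and since $d$ is even this forces $a_i/2\in\Z$, so the (unique) lift may be written explicitly as $\tilde\nu(t) = c(t^{a_1/2},\dots,t^{a_n/2})$, which indeed satisfies $\rho\circ\tilde\nu = \nu$. Evaluating at $\zeta_d$ and using $\zeta_d^{a_i/2} = (\zeta_d^{d/2})^{a_i/d} = (-1)^{a_i/d}$ (as $\zeta_d^{d/2} = -1$), we get $\tilde\nu(\zeta_d) = c((-1)^{a_1/d},\dots,(-1)^{a_n/d}) = \prod_i c_i((-1)^{a_i/d})$. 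The Clifford relation gives $c_i(1) = e_ie_{-i}+e_{-i}e_i = (e_i,e_{-i}) = 1$ and $c_i(-1) = -(e_i,e_{-i}) = -1 = z$, so $\tilde\nu(\zeta_d) = z^N$ with $N = \#\{i : a_i/d\text{ is odd}\}$. Since $z^2 = 1$, we get $\tilde\nu(\zeta_d) = 1$ iff $N$, equivalently $\sum_i a_i/d$, is even. Finally $\lip\omega_\Sigma,\nu\rip = \sum_i\eps_i a_i = d\sum_i\eps_i(a_i/d)$ with each $\eps_i = \pm 1$, so $2d\mid\lip\omega_\Sigma,\nu\rip$ iff $\sum_i\eps_i(a_i/d)$, equivalently $\sum_i a_i/d$, is even, which is exactly the condition just found.

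No step here is a serious obstacle; once $\ker c$ is in hand the lemma is essentially a bookkeeping exercise. The two facts doing the real work are the description of $\ker c$ (which pins down $X_*(\tilde T_V)$ and makes $\rho_*$ multiplication by $2$) and the Clifford identity $c_i(-1) = z$ (which turns $\tilde\nu(\zeta_d)$ into a power of $z$). The one point needing care is that $\tilde T_V$ is a \emph{proper} quotient of $\mb G_m^n$, so a general cocharacter of $\tilde T_V$ need not lift to $\mb G_m^n$ and hence need not have the form $t\mapsto c(t^{b_1},\dots,t^{b_n})$ used in (3); the hypothesis that $d$ is even is exactly what makes all the $a_i/2$ integral and licenses that formula. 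One could instead argue (3) abstractly, noting $\tilde\nu(\zeta_d)\in\ker\rho = \{1,z\}$ by (2) and then deciding which of the two it is by pairing $\tilde\nu$ with a character of $\tilde T_V$ all of whose coordinates are odd.
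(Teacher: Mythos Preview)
Your proof is correct and follows essentially the same route as the paper's: both write the lift explicitly as $\tilde\nu(t)=c(t^{a_1/2},\ldots,t^{a_n/2})$, evaluate at $\zeta_d$, and use the description of $\ker c$ (your computation $c_i(-1)=z$ is just a pointwise restatement of this) to reduce (3) to the parity of $\sum_i a_i/d$. The only cosmetic differences are that you handle general $\Sigma$ directly while the paper first treats $\Sigma_*$ and then reduces mod $2d$, and that you spell out $X_*(\tilde T_V)$ explicitly where the paper simply invokes that $\im\rho_*=Q(T_V)$.
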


\begin{proof} 

For the first statement, note that the image of  $X_*(\tilde T_V)$ in $X_*(T_V)$ is exactly $Q(T_V)$. One checks that $\lip \omega_\Sigma,\nu \rip$ is even iff $\nu \in Q(T_V)$.

For the second statement, just use that
\begin{equation} \label{nu.and.as}
\nu(t)=\diag(t^{b_1}, \ldots, t^{b_n}, \ldots),
\end{equation}
with $b_i=\lip \vt_i,\nu \rip$.

 Now consider the third statement for $\Sigma=\Sigma_*$. By hypothesis each $b_i$ in \eqref{nu.and.as} is even, and $\lip \omega_{\Sigma_*},\nu \rip=b_1+ \cdots + b_n$ is even.
Then
\beq
\tilde \nu(t)=c(t^{b_1/2}, \ldots, t^{b_n/2}), 
\eeq
so 
\beq
\tilde \nu(\zeta_d)=c \left(\zeta_d^{\frac{b_1}{2}}, \ldots, \zeta_d^{\frac{b_n}{2}} \right).
\eeq
Since each $b_i$ is even, each $\zeta_d^{\frac{b_i}{2}}=\pm 1$.
Therefore $\tilde \nu(\zeta_d)=1$, i.e., 
\beq
\zeta_d^{\frac{b_1+\cdots + b_n}{2}}=1,
\eeq
equivalently $2d$ divides $\lip \omega_{\Sigma_*},\nu \rip$, as claimed. Finally, by hypothesis $d$ divides each $\lip \vt_i,\nu \rip$, so that $\lip \omega_{\Sigma_*},\nu \rip \equiv \lip \omega_{\Sigma},\nu \rip \mod 2d$.
\end{proof}

 \section{Lifting Cocharacters} \label{s3}

We reformulate the lifting problem for an orthogonal representation in terms of its weights.  Throughout this section $G$ is a connected reductive group over an algebraically closed field $F$, and $T$ is a maximal torus of $G$.

Recall \cite{Spanier} that for nice topological spaces such as manifolds, if $\rho: \tilde Y \to Y$ is a covering map, then a continuous function  $\varphi: X \to Y$ lifts to $\hat \varphi: X \to \tilde Y$ iff $\varphi_*(\pi_1(X)) \leq \rho_*(\pi_1(\tilde Y))$ (with compatibly chosen basepoints on $X,Y,\tilde Y$). The purpose of the next proposition is to extend this to the setting of algebraic groups.

 \begin{lemma} \label{ref.asked} 
 Let $G,H$ be connected reductive groups, with maximal tori $T \leq G$ and $T_H \leq H$. Let $\varphi: G \to H$ be a morphism with $\varphi(T) \leq T_H$. 
 The induced map $\varphi_*: X_*(T) \to X_*(T_H)$ takes $Q(T)$ to $Q(T_H)$.
 \end{lemma}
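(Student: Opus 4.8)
The plan is to reduce to the coroot generators of $Q(T)$ and push each one through the simply connected cover of $H$, where the cocharacter lattice of a maximal torus coincides with the coroot lattice.

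First I would use that $Q(T)$ is generated by the coroots $\alpha^\vee$, $\alpha \in R$, so it suffices to show $\varphi_*(\alpha^\vee) \in Q(T_H)$ for each $\alpha$. Attached to $\alpha$ there is the standard homomorphism $\phi_\alpha \colon \SL_2 \to G$ with $\phi_\alpha\bigl(\diag(t,t^{-1})\bigr) = \alpha^\vee(t)$; in particular $\phi_\alpha$ carries the diagonal torus $S \leq \SL_2$ into $T$, since $\alpha^\vee(\mb G_m) \subseteq T$. Composing, $\varphi \circ \phi_\alpha \colon \SL_2 \to H$ has image in the derived group $H_{\der}$, because $\SL_2$ is perfect.

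Next I would lift through the isogeny $\pi_H \colon H_{\scc} \to H_{\der} \hookrightarrow H$. Since $\SL_2$ is simply connected and $\pi_H$ is a central isogeny, $\varphi \circ \phi_\alpha$ lifts to a homomorphism $\psi \colon \SL_2 \to H_{\scc}$ with $\pi_H \circ \psi = \varphi \circ \phi_\alpha$: form the pullback $\SL_2 \times_{H_{\der}} H_{\scc}$, whose identity component is a connected group mapping to $\SL_2$ by an isogeny, hence isomorphically, and the inverse composed with the second projection is the desired $\psi$. Because $\pi_H\bigl(\psi(S)\bigr) = \varphi\bigl(\alpha^\vee(\mb G_m)\bigr) \subseteq \varphi(T) \subseteq T_H$, the torus $\psi(S)$ lies in $\pi_H^{-1}\bigl((T_H \cap H_{\der})^\circ\bigr)$, which is a maximal torus $\tilde T_H$ of $H_{\scc}$ — here one uses that $\ker \pi_H$ is central, hence contained in every maximal torus, so this preimage is connected. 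Thus, writing $\beta^\vee \in X_*(S)$ for the coroot of $\SL_2$, we get $\psi_*(\beta^\vee) \in X_*(\tilde T_H)$.

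Finally I would invoke the two structural facts for simply connected semisimple groups: $X_*(\tilde T_H)$ equals the coroot lattice $Q(\tilde T_H)$, and the central isogeny $\pi_H$ identifies the coroots of $H_{\scc}$ with those of $H$, so $(\pi_H)_*$ carries $Q(\tilde T_H)$ onto $Q(T_H)$. Chasing the square, $\varphi_*(\alpha^\vee) = (\varphi \circ \phi_\alpha)_*(\beta^\vee) = (\pi_H)_*\bigl(\psi_*(\beta^\vee)\bigr) \in (\pi_H)_*\bigl(X_*(\tilde T_H)\bigr) = Q(T_H)$, which is what we want. The main obstacle is the lifting step: one must be careful that any simply connected semisimple group (here $\SL_2$) admits no nontrivial connected isogeny covering it, so that the map genuinely lifts to $H_{\scc}$, and that the relevant preimages of tori under the central isogeny are again maximal tori.
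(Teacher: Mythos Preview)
Your argument is correct and follows essentially the same strategy as the paper: lift through the simply connected cover of (the derived group of) $H$ via a fibre-product argument, then use that $X_*$ coincides with the coroot lattice for a simply connected group. The only difference is cosmetic---the paper lifts all of $G_{\scc}$ to $H_{\scc}$ in one stroke, whereas you lift each coroot's $\SL_2$ separately; the core idea and the key structural facts invoked are the same.
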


\begin{proof}
Suppose first that $G,H$ are semisimple.
Write $\rho_G: G_{\simp} \to G$ for the universal cover, with maximal torus $T_{\simp}$ above $T$. Similarly we have $\rho_H: H_{\simp} \to H$, with maximal torus 
 $T_{H,\simp}$. 
Put $\Phi=\varphi \circ \rho_G: G \to H$.

Let  $\tilde G=(G_{\simp} \times_H H_{\simp})^\circ$, with projection maps $\pr_1: \tilde G \to G_{\simp}$ and $\pr_2:  \tilde G \to  H_{\simp}$.
It is easy to see that $\pr_1$ is a central isogeny; since $G_{\simp}$ is simply connected, it is an isomorphism by 2.15 of \cite{Springer.Corvallis}. 
If we put $\widetilde \varphi=\pr_2 \circ (\pr_1)^{-1}$, then the following diagram commutes:

\begin{equation} \label{lifting.diagram.1}
\xymatrix{
 G_{\simp} \ar[r]^{\widetilde \varphi} \ar[d]_{\rho_G} &H_{\simp} \ar[d]_{ \rho_H } \\
 G \ar[r]_{\varphi}    &H }
\end{equation}

Note that $\varphi$   restricts to a map from $T_{\simp}$ to $T_{H,\simp}$. Applying the covariant functor $T \mapsto X_*(T)$ gives the commutative square:

 \begin{equation*}
\xymatrix{
  X_*(T_{\simp}) \ar[r]^{\widetilde \varphi_*} \ar[d] &X_*(T_{H,\simp}) \ar[d] \\
 X_*(T) \ar[r]^{\varphi_*} &X_*(T_H) }
\end{equation*}

We have $X_*(T_{\simp})=Q(T)$ and similarly for $T_{H,\simp}$ by simple connectedness. The downward maps, being isogenies, take coroots to coroots and we deduce that 
$\varphi_*$ takes $Q(T)$ to $Q(T_H)$.     This conclusion also holds for  $G$ and $H$ connected reductive, for one applies the previous argument to the derived groups $G_{\der}$ and $H_{\der}$, recalling that the  coroots of $G$ lie in $X_*(T_{\der})$. \end{proof}

\begin{prop} \label{lift.proposition} Let $\rho: \tilde H \to H$ be a central isogeny of connected reductive  groups over $F$, and $\varphi: G \to H$ a morphism.  Pick a maximal torus $T_H \leq H$ containing $\varphi(T)$, and write $\varphi_*: X_*(T) \to X_*(T_H)$ for the induced map. 
Let $\tilde{ T}_H=\rho^{-1}(T_H) \leq \tilde H$, and write $\rho_*: X_*(\tilde T_H) \to X_*(T_H)$ for the induced map.  Then there exists a morphism $\hat \varphi: G \to \tilde H$ such that $\rho \circ \hat \varphi = \varphi$, iff $\im \varphi_* \subseteq \im \rho_*$. Moreover when this morphism exists, it is unique.
\end{prop}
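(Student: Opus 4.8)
The plan is to establish uniqueness directly, and then to construct $\hat\varphi$ by the fiber-product trick already used in the proof of Lemma~\ref{ref.asked}. Throughout, the condition $\im\varphi_*\subseteq\im\rho_*$ plays the role of the $\pi_1$-condition from covering space theory recalled above.

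For uniqueness, suppose $\hat\varphi_1,\hat\varphi_2\colon G\to\tilde H$ both lift $\varphi$. Then $g\mapsto\hat\varphi_1(g)\hat\varphi_2(g)^{-1}$ takes values in the finite central subgroup $\ker\rho$, and centrality of $\ker\rho$ makes this map a homomorphism $G\to\ker\rho$; since $G$ is connected it is trivial, so $\hat\varphi_1=\hat\varphi_2$. For the ``only if'' direction, if $\hat\varphi$ exists then $\rho(\hat\varphi(T))=\varphi(T)\subseteq T_H$ forces $\hat\varphi(T)\subseteq\rho^{-1}(T_H)=\tilde T_H$; applying $X_*$ and using $\rho\circ\hat\varphi=\varphi$ gives $\rho_*\circ\hat\varphi_*=\varphi_*$, hence $\im\varphi_*\subseteq\im\rho_*$. (Here one notes in passing that $\tilde T_H$ is a genuine maximal torus of $\tilde H$: the preimage of a maximal torus under a central isogeny is connected, because $\ker\rho$, being central, lies in every maximal torus.)

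For the ``if'' direction --- the substantive one --- assume $\im\varphi_*\subseteq\im\rho_*$ and form $\tilde G=(G\times_H\tilde H)^\circ$ with projections $\pr_1\colon\tilde G\to G$ and $\pr_2\colon\tilde G\to\tilde H$. Exactly as in Lemma~\ref{ref.asked}, $\pr_1$ is a central isogeny; the point is to upgrade this to an \emph{isomorphism}, for then $\hat\varphi:=\pr_2\circ\pr_1^{-1}$ does the job, since $\rho\circ\hat\varphi=\rho\circ\pr_2\circ\pr_1^{-1}=\varphi\circ\pr_1\circ\pr_1^{-1}=\varphi$. Because $\ker\pr_1$ is central in $\tilde G$, it lies in every maximal torus, so it is enough to check that the restriction of $\pr_1$ to one maximal torus $\tilde T$ --- the one with $\pr_1(\tilde T)=T$ --- is an isomorphism. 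Identifying $\tilde T$ with the identity component of $T\times_{T_H}\tilde T_H$ (the preimage $\pr_1^{-1}(T)$ inside $G\times_H\tilde H$ lands in $T\times\tilde T_H$ because $\varphi(T)\subseteq T_H$), one gets
\[
X_*(\tilde T)\;=\;\{(\nu,\tilde\nu)\in X_*(T)\oplus X_*(\tilde T_H)\;:\;\varphi_*\nu=\rho_*\tilde\nu\},
\]
with $(\pr_1)_*$ the projection to the first summand. Its image is $\{\nu\in X_*(T):\varphi_*\nu\in\im\rho_*\}$, which is all of $X_*(T)$ by hypothesis, and it is injective because $\rho_*$ is injective; so $(\pr_1)_*$ is an isomorphism of cocharacter lattices, whence $\pr_1\colon\tilde T\to T$, and therefore $\pr_1\colon\tilde G\to G$, is an isomorphism.

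I expect the main obstacle to be the connectedness bookkeeping --- verifying that $\rho^{-1}(T_H)$ and $\pr_1^{-1}(T)$ are already connected (hence maximal tori), that $\tilde G\to G$ really is a central isogeny, and that the ``center lies in every maximal torus'' reductions are legitimate --- together with pinning down $X_*$ of the fiber-product torus correctly. Once those are in place the equivalence falls out formally.
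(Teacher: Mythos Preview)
Your proof is correct and uses the same fiber-product construction $\tilde G=(G\times_H\tilde H)^\circ$ as the paper. The difference lies in what you do with it. The paper establishes a chain of eight equivalent statements, passing from ``$\varphi$ lifts'' through ``$\widetilde\varphi=\pr_2$ factors through $\rho_G=\pr_1$'' (i.e.\ $\ker\pr_1\leq\ker\pr_2$), then reducing to tori, then to cocharacter lattices. You instead prove the stronger fact that $\pr_1$ is an \emph{isomorphism} under the hypothesis, by identifying $X_*(\tilde T)$ directly as the fiber product $X_*(T)\times_{X_*(T_H)}X_*(\tilde T_H)$ and reading off that $(\pr_1)_*$ is bijective. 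This is slightly more structural and bypasses the intermediate equivalences; it also makes the construction of $\hat\varphi=\pr_2\circ\pr_1^{-1}$ immediate. The paper's chain, on the other hand, makes each logical step explicit and does not need the observation that $\ker\pr_1$ is actually trivial (only that it lies in $\ker\pr_2$). Both routes rely on the same key reduction---central kernels lie in maximal tori---and your bookkeeping concerns (connectedness of $\rho^{-1}(T_H)$, surjectivity of $\pr_1$ onto the connected $G$, and the fiber-product description of $X_*(\tilde T)$) are all handled correctly.
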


\begin{proof}

Let   $\tilde G=(G \times_H \tilde H)^\circ$, with projection maps $\rho_G: \tilde G \to G$ and $\widetilde \varphi: \tilde G \to \tilde H$.  We have the   diagram:

\begin{equation} \label{lifting.diagram.1}
\xymatrix{
 \tilde G \ar[r]^{\widetilde \varphi} \ar[d]_{\rho_G} &\tilde H \ar[d]_{ \rho } \\
 G \ar[r]_{\varphi}  \ar@{-->}[ru]^{\hat \varphi} &H }
\end{equation}
 
 Put $\tilde T_H=\rho^{-1}(T_H)$  and $\tilde T=\rho_G^{-1}(T)$. 
Let us see the equivalence of the following statements:
 
 \begin{enumerate}
 
\item $\varphi$  lifts to $\hat \varphi: G \to \tilde H$.
 
\item $\widetilde \varphi$  factors through $\rho_G$.  
 
\item $\ker \rho_G \leq \ker \widetilde \varphi$. 

\item $\ker \rho_G \leq \ker \widetilde \varphi|_{\tilde T}$. 

\item $\tilde  \varphi|_{\tilde T}$ factors through $T$. 

\item $\varphi|_T$  lifts  to $\tilde T_H$. 

\item $\varphi_*$ lifts in the diagram:
 
 \begin{equation*}
\xymatrix{
 &X_*(\tilde T_H) \ar[d]^{ \rho_* } \\
 X_*(T) \ar[r]^{\varphi_*}  \ar@{-->}[ru]&X_*(T_H) }
\end{equation*}

\item $\im \varphi_* \leq \im \rho_*$.

\end{enumerate}
  
  For (1) $\Rightarrow$ (2), suppose $\varphi$ lifts to $\hat \varphi$. Then
\begin{align*} 
\rho (\widetilde \varphi (x)^{-1} \cdot  \hat \varphi( \rho_G (x))) =  1 \in H,
\end{align*} 
so the algebraic map $m: \tilde G \to \tilde H$ defined by 
\beq
m(x)= {\widetilde \varphi (x)}^{-1} \cdot \hat \varphi(\rho_G (x)),
\eeq
takes values in $\ker \rho$. Since $\ker \rho$ is discrete, and $\tilde G$ is connected, it must be that $m$ is constant. Thus $m(x)=1 \in \tilde H$   for all $x$, i.e., $\widetilde \varphi = \hat \varphi \circ \rho_G$.

For (2) $\Rightarrow$ (1), suppose $\widetilde \varphi=\hat \varphi \circ \rho_G$ for some morphism $\hat \varphi$.  From the identity $\varphi \circ \rho_G=\rho \circ \widetilde \varphi$ and the fact that $\rho_G$ is surjective we deduce that $\varphi=\rho \circ \hat \varphi$.

(2) $\Rightarrow$ (3) is immediate.

The implication (3) $\Rightarrow$ (2) follows from the universal property of $\tilde G/\ker \rho_G$. (See Section 5.5, page 92 of \cite{Springer}.)

Since $\ker \rho_G \leq Z(\tilde G) \leq \tilde T$, we have (3) $\Leftrightarrow$ (4).

The argument for (4) $\Leftrightarrow$ (5) is similar to the argument for (2) $\Leftrightarrow$ (3), and   (5) $\Leftrightarrow$ (6) is similar to   (1) $\Leftrightarrow$ (2).

To see (6) $\Leftrightarrow$ (7), note that the functors $T \mapsto X_*(T)$ and $L \mapsto L \otimes_{\Z} F^\times$ give an equivalence of categories between $F$-tori and free abelian groups of finite rank. 

  The equivalence (7) $\Leftrightarrow$ (8) is elementary.
  Thus (1)-(8) are equivalent.
  
  Finally, suppose that $\hat \varphi_1$ and $\hat \varphi_2$ are lifts of $\varphi$. Then $g \mapsto \hat \varphi_1(g) \hat \varphi_2(g)^{-1}$ is an algebraic map $G \to \ker \rho$ taking $1$ to $1$. Since $G$ is connected it must be that $\hat \varphi_1=\hat \varphi_2$.  
 \end{proof}

\begin{remark}
This proof did not use the property that $F$ has characteristic zero.
In the case of positive characteristic, it is sufficient for $F$ to be separably closed, and then $\hat \varphi$ is defined over $F$.  Suppose $F$ is an arbitrary field, and the maps $\rho$ and $\varphi$ are defined over $F$. By uniqueness,   $\hat \varphi$, when it exists, is fixed by the absolute Galois group of $F$ and hence defined over $F$.
\end{remark}

 By Lemma \ref{ref.asked},  $\varphi_*$ descends to  
\beq
\varphi_*: \pi_1(G) \to \pi_1(H).
\eeq
Again, since $\rho$ is an isogeny, we have $\rho_*(Q(\tilde T_H))=Q(T_H)$.  Therefore a lift $\hat \varphi$ in the diagram (\ref{lifting.diagram.1}) exists iff $\varphi_*$ lifts in the diagram:
 \begin{equation*}
\xymatrix{
 &\pi_1(\tilde H) \ar[d]^{ \rho_* } \\
 \pi_1(G) \ar[r]^{\varphi_*}  \ar@{-->}[ru]&\pi_1(H)}
\end{equation*}

Recall we have fixed a set $\ul \nu$ of cocharacters which generates $\pi_1(G)$.

 \begin{corollary}\label{lift.corollary}   A lift $\hat \varphi$ as in the above proposition exists iff $\varphi_*(\nu) \in \im \rho_*$ for each $\nu \in \ul \nu$.
  \end{corollary}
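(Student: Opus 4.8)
The plan is to deduce this directly from Proposition \ref{lift.proposition} together with the two facts recorded just before the corollary: that $\varphi_*$ descends to a homomorphism $\pi_1(G) \to \pi_1(H)$ by Lemma \ref{ref.asked}, and that $\rho_*(Q(\tilde T_H)) = Q(T_H)$ since $\rho$ is an isogeny. By the proposition, a lift $\hat\varphi$ exists if and only if $\im(\varphi_*: X_*(T) \to X_*(T_H)) \subseteq \im(\rho_*: X_*(\tilde T_H) \to X_*(T_H))$, so the whole task is to reformulate this containment of subgroups of $X_*(T_H)$.

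First I would note that $\im \rho_*$ contains $Q(T_H)$, because $Q(\tilde T_H) \subseteq X_*(\tilde T_H)$ maps onto $Q(T_H)$. Hence for $\nu \in X_*(T_H)$, membership $\nu \in \im \rho_*$ depends only on the class of $\nu$ in $\pi_1(H) = X_*(T_H)/Q(T_H)$, and the image of $\im \rho_*$ in $\pi_1(H)$ is exactly $\im(\rho_*: \pi_1(\tilde H) \to \pi_1(H))$. Similarly, since $\varphi_*(Q(T)) \subseteq Q(T_H)$ by Lemma \ref{ref.asked}, the image of $\im \varphi_*$ in $\pi_1(H)$ is the image of the descended map $\bar\varphi_*: \pi_1(G) \to \pi_1(H)$. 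Therefore $\im \varphi_* \subseteq \im \rho_*$ inside $X_*(T_H)$ if and only if $\im \bar\varphi_* \subseteq \im(\rho_*: \pi_1(\tilde H) \to \pi_1(H))$ inside $\pi_1(H)$, which is precisely the lifting of $\varphi_*$ in the $\pi_1$-diagram displayed just above.

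Next I would use that the images of $\nu_1, \ldots, \nu_r$ generate $\pi_1(G)$, so $\im \bar\varphi_*$ is the subgroup of $\pi_1(H)$ generated by the elements $\bar\varphi_*(\bar\nu_i)$. Since $\im(\rho_*: \pi_1(\tilde H) \to \pi_1(H))$ is a subgroup, the containment $\im \bar\varphi_* \subseteq \im \rho_*$ holds iff each generator $\bar\varphi_*(\bar\nu_i)$ lies in $\im \rho_*$; translating back through the identification of the previous paragraph, this says $\varphi_*(\nu_i) \in \im \rho_*$ for each $\nu_i \in \ul\nu$, which is the asserted criterion. There is no genuine obstacle here — the statement is a formal consequence of the proposition and Lemma \ref{ref.asked}. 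The only point needing a moment's care is the middle step, namely that passing to $\pi_1$ loses no information in the containment, which rests entirely on $Q(T_H) \subseteq \im \rho_*$, together with the routine fact that containment of a finitely generated subgroup in a subgroup can be checked on generators.
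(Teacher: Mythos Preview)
Your proof is correct and follows essentially the same approach as the paper. The paper presents the argument informally in the paragraph preceding the corollary (reducing the containment in $X_*(T_H)$ to one in $\pi_1(H)$ via $Q(T_H) \subseteq \im \rho_*$ and Lemma \ref{ref.asked}), and then leaves the final step---checking on generators of $\pi_1(G)$---as evident; you have simply spelled out these details explicitly.
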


 \begin{defn} Let $(\varphi,V)$ be a representation of $G$.   For $\nu \in X_*(T)$, put
 \beq
 L_\varphi(\nu) =\sum_{\{\mu \in X^*(T) \mid \lip \mu,\nu \rip >0 \}} m_\varphi(\mu) \lip \mu,\nu \rip \in \Z.
 \eeq
 \end{defn}
 
 \begin{prop}  \label{L.phi.crit} Let $\varphi: G \to \SO(V)$ be an orthogonal representation.    
 For $\nu \in X_*(T)$, the cocharacter $\varphi_*(\nu) \in \im \rho_*$ iff $L_\varphi (\nu)$ is even.
Thus $\varphi$ is spinorial iff the integers $L_\varphi(\nu)$ are even for all $\nu \in \ul \nu$.
 \end{prop}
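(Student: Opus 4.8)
The plan is to reduce the lifting of a single cocharacter $\varphi_*(\nu)$ to the parity criterion of Lemma~\ref{blue.shirt}, after choosing a maximal torus of $\SO(V)$ adapted to the weights of $\varphi$. Since all maximal tori of $\SO(V)$ are conjugate and conjugation does not affect whether a cocharacter lifts to $\Spin(V)$, I am free to pick $T_V$ conveniently. Diagonalizing the action of $T$ on $V$ gives $V=\bigoplus_\mu V^\mu$, and invariance of the form forces $(V^\mu,V^{\mu'})=0$ unless $\mu'=-\mu$; hence the form restricts to a perfect pairing $V^\mu\times V^{-\mu}\to F$ for $\mu\neq 0$ and to a nondegenerate symmetric form on $V^0$. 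Choosing, for each pair $\{\mu,-\mu\}$ of nonzero weights, a basis of $V^\mu$ together with its dual basis in $V^{-\mu}$, and a hyperbolic (plus, if $\dim V^0$ is odd, one anisotropic) basis of $V^0$, I obtain a basis $(e_1,\dots,e_n,\dots,e_{-n},\dots,e_{-1})$ of $V$ of the shape used in Section~\ref{tori.spin.section}, with $(e_i,e_{-i})=1$ and each $e_i$ a $T$-weight vector. Let $\mu_i\in X^*(T)$ be the weight of $e_i$; then $e_{-i}$ has weight $-\mu_i$ and any $e_0$ has weight $0$. Take $T_V$ to be the resulting diagonal torus; since the $e_i$ are $T$-weight vectors, $T_V\supseteq\varphi(T)$, and $T_V$ is a maximal torus of $\SO(V)$.

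Next I would express $\varphi_*(\nu)$ in these coordinates. Since $T$ acts on $e_i$ through the character $\mu_i$, the cocharacter $\varphi_*(\nu)\in X_*(T_V)$ satisfies $\lip\vt_i,\varphi_*(\nu)\rip=\lip\mu_i,\nu\rip$ for each $i$; so under the identification $X_*(T_V)=\Z^n$ of Section~\ref{tori.spin.section} it has $i$-th coordinate $b_i=\lip\mu_i,\nu\rip$. By Lemma~\ref{blue.shirt}(1), applied with $\Sigma=\Sigma_*$, the cocharacter $\varphi_*(\nu)$ lifts to $X_*(\tilde T_V)$ --- equivalently lies in $\im\rho_*$, since $\im\rho_*=Q(T_V)$ --- if and only if $\lip\omega_{\Sigma_*},\varphi_*(\nu)\rip=b_1+\cdots+b_n=\sum_i\lip\mu_i,\nu\rip$ is even.

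It then remains to verify the congruence $\sum_i\lip\mu_i,\nu\rip\equiv L_\varphi(\nu)\pmod 2$. Here I would group the indices $i$ by the weight pair $\{\mu_i,-\mu_i\}$. The indices with $\mu_i=0$ contribute $0$ to both sides. For a fixed pair $\{\mu,-\mu\}$ of nonzero weights, exactly $m_\varphi(\mu)$ indices $i$ have $\mu_i\in\{\mu,-\mu\}$, because $V^\mu\oplus V^{-\mu}$ is spanned by $m_\varphi(\mu)$ of the $e_i$ and the matching $e_{-i}$. If $a$ of these indices have $\mu_i=\mu$ and the other $m_\varphi(\mu)-a$ have $\mu_i=-\mu$, their contribution to $\sum_i\lip\mu_i,\nu\rip$ is $(2a-m_\varphi(\mu))\lip\mu,\nu\rip$, whereas by definition of $L_\varphi$ their contribution to $L_\varphi(\nu)$ is $m_\varphi(\mu)\,|\lip\mu,\nu\rip|$; these differ by an even multiple of $\lip\mu,\nu\rip$. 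Summing over all pairs gives the congruence, and therefore $\varphi_*(\nu)\in\im\rho_*$ if and only if $L_\varphi(\nu)$ is even, which is the first assertion.

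Finally, the last sentence of the proposition follows by applying the first assertion to each $\nu\in\ul\nu$ and invoking Corollary~\ref{lift.corollary} for the isogeny $\rho:\Spin(V)\to\SO(V)$: the representation $\varphi$ is spinorial, i.e.\ lifts to $\Spin(V)$, precisely when $\varphi_*(\nu)\in\im\rho_*$ for every $\nu\in\ul\nu$, i.e.\ precisely when every integer $L_\varphi(\nu)$ is even. The only mildly delicate point in the argument is the simultaneous adaptation of the basis to the weight decomposition and to the bilinear form, together with the harmless sign ambiguity in which of $\pm\mu$ occurs as a given $\mu_i$; neither of these presents a genuine obstacle, since the real content is already packaged in Lemma~\ref{blue.shirt} and the elementary parity computation above.
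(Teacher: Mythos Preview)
Your proof is correct and follows essentially the same route as the paper: both reduce to Lemma~\ref{blue.shirt}(1) and Corollary~\ref{lift.corollary}, identifying $\lip \omega_\Sigma, \varphi_*\nu \rip$ with (a parity representative of) $L_\varphi(\nu)$ via the weight decomposition of $V$. The only cosmetic difference is that the paper chooses $\Sigma$ adapted to $\nu$ (picking from each pair $\{\vt_i,-\vt_i\}$ the sign with $\lip \varphi^*\omega,\nu\rip\geq 0$) and thereby gets $\lip \omega_\Sigma,\varphi_*\nu\rip = L_\varphi(\nu)$ exactly, whereas you fix $\Sigma=\Sigma_*$ and then establish the congruence $\sum_i \lip \mu_i,\nu\rip \equiv L_\varphi(\nu)\pmod 2$ by your sign-counting argument.
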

 (Compare Exercise 7 in Section 8, Chapter IX of \cite{Bou.Lie.7-9} and Lemma 3 in \cite{Prasad.Ramakrishnan}.)
 
 \begin{proof}
 We may assume that $\varphi(T) \leq T_V$.  
  By  Corollary \ref{lift.corollary}, $\varphi$ is spinorial iff $\varphi_*(\nu) \in \im \rho_*$ for all $\nu \in \ul \nu$.
   By Lemma \ref{blue.shirt}(1), we much check whether the integer $\lip \omega_\Sigma, \varphi_*(\nu) \rip$ is even for a suitable $\Sigma$.

 Write $P_V=\{ \pm \vt_i \mid 1 \leq i \leq n \}$, the weights of $V$ as a $T_V$-module. 
 Let
 \beq
 P_V^1=\{ \omega \in P_V \mid \lip \varphi^* \omega,\nu \rip \geq 0 \}.
 \eeq
 We may choose $\Sigma \subseteq P_V^1$ so that $\Sigma$ contains one representative from each pair $\{\vt_i,-\vt_i \}$ as in  Section \ref{tori.spin.section}.

 Then
 \beq
 \begin{split}
 \lip \omega_\Sigma, \varphi_*\nu \rip &=\sum_{\omega \in \Sigma} \lip \varphi^*\omega , \nu \rip \\
  &= \sum_{\{\mu \mid \lip \mu,\nu \rip >0 \}} \lip \mu,\nu \rip \cdot \dim V^{\mu} \\
  &= L_{\varphi}(\nu).\\
  \end{split}
 \eeq
   

 Thus $\varphi$ lifts iff  $L_{\varphi}(\nu)$ is even for all $\nu \in \ul \nu$. 
 \end{proof}

  Since $\varphi(Q(T)) \subseteq Q(T_V)$ we note:
  \begin{cor} \label{l.q} If $\nu \in Q(T)$, then $L_\varphi(\nu)$ is even.
  \end{cor}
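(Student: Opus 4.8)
The plan is to read this off directly from Proposition \ref{L.phi.crit}, using the functoriality statement of Lemma \ref{ref.asked}. First I would arrange, exactly as in the proof of Proposition \ref{L.phi.crit}, that $\varphi(T) \leq T_V$; this is harmless, since replacing $\varphi$ by a conjugate under $\SO(V)$ alters neither spinoriality nor any of the integers $L_\varphi(\nu)$. With this normalization $\varphi$ induces $\varphi_*: X_*(T) \to X_*(T_V)$, and Lemma \ref{ref.asked} (applied to $\varphi: G \to \SO(V)$) says precisely that $\varphi_*$ carries $Q(T)$ into $Q(T_V)$.

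Next I would recall that for the isogeny $\rho: \Spin(V) \to \SO(V)$, with $\tilde T_V = \rho^{-1}(T_V)$, the image of $\rho_*: X_*(\tilde T_V) \to X_*(T_V)$ is exactly $Q(T_V)$; this is the content of Lemma \ref{blue.shirt}(1) and was already invoked inside the proof of Proposition \ref{L.phi.crit}. Hence for any $\nu \in Q(T)$ we obtain $\varphi_*(\nu) \in Q(T_V) = \im \rho_*$. Proposition \ref{L.phi.crit} then states that $\varphi_*(\nu) \in \im \rho_*$ if and only if $L_\varphi(\nu)$ is even, and the conclusion follows.

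I do not anticipate any genuine obstacle here: the statement is a formal consequence of the two preceding results, Lemma \ref{ref.asked} and Proposition \ref{L.phi.crit}. The only point deserving a word of care is the passage to the case $\varphi(T) \leq T_V$, which is the same routine conjugation already used above, together with the identification $\im \rho_* = Q(T_V)$ supplied by Lemma \ref{blue.shirt}(1).
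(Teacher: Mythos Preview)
Your proposal is correct and is exactly the paper's argument. The paper records the corollary with the one-line justification ``Since $\varphi(Q(T)) \subseteq Q(T_V)$ we note:'', which is precisely the combination of Lemma~\ref{ref.asked} and Proposition~\ref{L.phi.crit} (via $\im \rho_* = Q(T_V)$) that you spell out.
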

 
 \bigskip
  
For two representations $\varphi_1,\varphi_2$, we have
\begin{equation} \label{add.in.phi}
L_{\varphi_1 \oplus \varphi_2}(\nu)=L_{\varphi_1}(\nu)+L_{\varphi_2}(\nu),
\end{equation}
since $m_{\varphi_1 \oplus \varphi_2}(\mu)=m_{\varphi_1}(\mu)+m_{\varphi_2}(\mu)$.

 \begin{cor}
 The adjoint representation of $G$ on $\mf g$ is spinorial iff $\delta \in X^*(T)$.
  \end{cor}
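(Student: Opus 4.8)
The plan is to feed $\varphi=\Ad$ into the criterion of Proposition \ref{L.phi.crit}: the adjoint representation is spinorial iff $L_{\Ad}(\nu)$ is even for every $\nu\in\ul\nu$. So the first task is to compute $L_{\Ad}(\nu)$ modulo $2$. The weights of the adjoint representation are the roots $\alpha\in R$, each with multiplicity one, together with the zero weight (with multiplicity $\dim T$); the zero weight contributes nothing to $L_{\Ad}(\nu)$ since only weights $\mu$ with $\lip\mu,\nu\rip>0$ enter the defining sum. Fixing a set $R^+$ of positive roots and pairing each $\alpha$ with $-\alpha$, the pair $\{\alpha,-\alpha\}$ contributes exactly $|\lip\alpha,\nu\rip|$, so that
\beq
L_{\Ad}(\nu)=\sum_{\alpha\in R^+}|\lip\alpha,\nu\rip|.
\eeq
Since $|m|\equiv m\pmod 2$ for every integer $m$, this reduces at once to $L_{\Ad}(\nu)\equiv\sum_{\alpha\in R^+}\lip\alpha,\nu\rip=\lip 2\delta,\nu\rip\pmod 2$.

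Next I would observe that $\nu\mapsto\lip 2\delta,\nu\rip\bmod 2$ is a homomorphism $X_*(T)\to\ztwo$ which vanishes on $Q(T)$: the group $Q(T)$ is generated by the (simple) coroots, and $\lip\delta,\alpha_i^\vee\rip=1$ for each simple root $\alpha_i$, so $\lip 2\delta,\nu\rip$ is even for $\nu\in Q(T)$ (this also follows from Corollary \ref{l.q}). Hence this homomorphism factors through $\pi_1(G)=X_*(T)/Q(T)$. Since $\ul\nu$ generates $\pi_1(G)$, the integers $\lip 2\delta,\nu\rip$ are even for all $\nu\in\ul\nu$ iff they are even for all $\nu\in X_*(T)$, i.e.\ iff $\lip\delta,\nu\rip\in\Z$ for all $\nu\in X_*(T)$. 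Now $2\delta=\sum_{\alpha\in R^+}\alpha\in X^*(T)$, so $\delta=\half(2\delta)\in X^*(T)\otimes\Q$; since $X^*(T)$ is the full $\Z$-dual of $X_*(T)$ under the (perfect) pairing, the condition $\lip\delta,\nu\rip\in\Z$ for all $\nu$ is precisely $\delta\in X^*(T)$. Chaining these equivalences yields the claim.

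There is essentially no genuine obstacle here; the argument is a short chain of elementary reductions once Proposition \ref{L.phi.crit} is available. The only points to handle with mild care are (i) organizing the weight sum around a fixed $R^+$ rather than the $\nu$-dependent set $\{\mu\mid\lip\mu,\nu\rip>0\}$, which makes both the pairing-off of $\pm\alpha$ and the $|m|\equiv m$ reduction transparent, and (ii) recording that the self-duality of $X^*(T)$ and $X_*(T)$ is exactly what converts the integrality statement ``$\lip\delta,\nu\rip\in\Z$ for all $\nu$'' into the membership $\delta\in X^*(T)$.
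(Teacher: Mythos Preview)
Your proof is correct and follows essentially the same route as the paper: compute $L_{\Ad}(\nu)$ modulo $2$ by pairing each root with its negative to obtain $\langle 2\delta,\nu\rangle$, then invoke the perfect pairing between $X^*(T)$ and $X_*(T)$. Your version is in fact slightly more careful than the paper's, which jumps directly from ``$L_\varphi(\nu)\equiv 2\langle\delta,\nu\rangle$'' to the conclusion via perfectness of the pairing; you explicitly justify why evenness on the generating set $\ul\nu$ is equivalent to evenness on all of $X_*(T)$ (namely, the map $\nu\mapsto\langle 2\delta,\nu\rangle\bmod 2$ kills $Q(T)$ and so factors through $\pi_1(G)$).
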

  
\begin{proof} If $\varphi$ is the adjoint representation, then
\beq
\begin{split}
L_\varphi (\nu) &= \sum_{\{\alpha \in R \mid \lip \alpha,\nu \rip >0\} }  \lip \alpha,\nu \rip \\
& \equiv \sum_{\alpha \in R^+} \lip \alpha,\nu \rip \mod 2 \\
&=2 \lip \delta,\nu \rip. 
\end{split}
\eeq
The corollary follows since the pairing $X^*(T) \times X_*(T) \to \Z$ is perfect.
\end{proof}

\begin{remark}
This is well-known; for $G$ a compact connected Lie group, see Section 5.56 of \cite{Adams}.  
\end{remark}

\begin{example} \label{pgl2.example}
 Let $G=\PGL_2$, with diagonal maximal torus $T$. Then $\pi_1(G)$ is generated by $\nu_0(t)=\diag(t,1)$ mod center. Let $\alpha$ be   the positive root  defined by 
$\alpha(\diag(a,b))=ab^{-1}$, and let $\varphi_j$ be the representation of $\PGL_2$ with highest weight $j \alpha$. Then
\beq
\begin{split}
L_{\varphi_j}(\nu_0) &= \lip \alpha, \nu_0 \rip+ \cdots + \lip j \alpha, \nu_0 \rip  \\
		&=1+ \cdots + j. \\
		\end{split}
		\eeq
		Therefore $\varphi_j$ is spinorial iff $j \equiv 0,3 \mod 4$.
 \end{example}

\section{Palindromy} \label{s4}

This section is the cornerstone of our paper.  The difficulty with determining the parity of $L_\varphi(\nu)$ is in somehow getting ahold of ``half" of the weights
of $V$, one for each positive/negative pair.  This amounts to knowledge of the polynomial part of a certain palindromic Laurent polynomial, and this we accomplish with a  derivative trick.

\begin{defn} For $(\varphi,V)$ a representation of $G$ and $\nu \in  X_*(T)$, consider the function $Q_{(\varphi,\nu)}: F^\times \to F$ defined by

\beq
\begin{split}
Q_{(\varphi,\nu)}(t) &=\Theta_\varphi(\nu(t)) \\
&=\tr(\varphi(\nu(t))). \\
\end{split}
\eeq
\end{defn}

If $\varphi$ is understood we may simply write `$Q_\nu(t)$'.  For $\gm \in T$, we have
\beq
\Theta_\varphi(\gm)=\sum_{\mu \in X^*} m_\varphi(\mu) \mu(\gm),
\eeq
so in particular
\begin{equation} \label{Qnu} 
Q_\nu(t)= \sum_{\mu \in X^*} m_\varphi(\mu) t^{\lip \mu,\nu \rip} \in \mb Z[t,t^{-1}].
\end{equation}

We note
\begin{itemize}
\item $Q_\nu(1)=\dim V$,
\item $Q_{\nu}'(1)= \sum_{\mu} m_\varphi(\mu) \lip \mu,\nu \rip$,
\item $Q''_{\nu}(1)= \sum_{\mu} \left(m_{\varphi}(\mu)\langle \mu,\nu \rangle^2 - m_{\varphi}(\mu)\langle \mu, \nu \rangle\right)$.
\end{itemize}

\begin{defn}  For $(\varphi,V)$ a representation of $G$ and $\nu \in  X_*(T)$, we set
\beq
q_{\varphi}(\nu)= \half Q''_{\nu}(1).
\eeq
\end{defn}

When $\varphi$ is self-dual, $m_\varphi(-\mu)=m_\varphi(\mu)$ for all $\mu \in X^*$, so in this case:

\begin{itemize}
\item $Q_\nu(t)=Q_\nu(t^{-1})$, i.e., $Q_\nu$ is ``palindromic",
\item $Q_\nu'(1)=0$,
\item  $Q_\nu''(1)=\sum_{\mu} m_\varphi(\mu) \lip \mu,\nu \rip^2 \in 2\Z$.
\end{itemize}

In particular, $q_\varphi(\nu)$ is an integer for all $\nu \in X_*$.  

\begin{lemma} \label{careful} For $\varphi$ self-dual and $\nu_1,\nu_2 \in X_*$, we have  
\beq
q_\varphi(\nu_1+\nu_2) \equiv q_\varphi(\nu_1)+q_\varphi(\nu_2) \mod 2.
\eeq
\end{lemma}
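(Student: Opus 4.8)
The plan is to compute $q_\varphi(\nu_1 + \nu_2)$ directly from the weight-sum formula and isolate the terms that obstruct additivity, showing they vanish modulo $2$. Recall that for self-dual $\varphi$ we have $q_\varphi(\nu) = \tfrac12 Q_\nu''(1) = \tfrac12 \sum_{\mu} m_\varphi(\mu) \lip \mu, \nu \rip^2$, where the sum runs over $\mu \in X^*$. So I would start by writing
\beq
2 q_\varphi(\nu_1 + \nu_2) = \sum_\mu m_\varphi(\mu) \lip \mu, \nu_1 + \nu_2 \rip^2 = \sum_\mu m_\varphi(\mu) \left( \lip \mu, \nu_1 \rip^2 + 2 \lip \mu, \nu_1 \rip \lip \mu, \nu_2 \rip + \lip \mu, \nu_2 \rip^2 \right),
\eeq
using bilinearity of the pairing. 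This gives $2 q_\varphi(\nu_1 + \nu_2) = 2 q_\varphi(\nu_1) + 2 q_\varphi(\nu_2) + 2 \sum_\mu m_\varphi(\mu) \lip \mu, \nu_1 \rip \lip \mu, \nu_2 \rip$, hence
\beq
q_\varphi(\nu_1 + \nu_2) = q_\varphi(\nu_1) + q_\varphi(\nu_2) + \sum_\mu m_\varphi(\mu) \lip \mu, \nu_1 \rip \lip \mu, \nu_2 \rip.
\eeq
So it suffices to show the cross-term $C := \sum_\mu m_\varphi(\mu) \lip \mu, \nu_1 \rip \lip \mu, \nu_2 \rip$ is even.

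For this I would exploit self-duality, i.e. $m_\varphi(-\mu) = m_\varphi(\mu)$. Pair up the terms indexed by $\mu$ and $-\mu$ for $\mu \neq 0$: the contribution of $-\mu$ is $m_\varphi(-\mu) \lip -\mu, \nu_1 \rip \lip -\mu, \nu_2 \rip = m_\varphi(\mu) \lip \mu, \nu_1 \rip \lip \mu, \nu_2 \rip$, which is identical to the contribution of $\mu$. The $\mu = 0$ term contributes $0$. Hence $C = 2 \sum_{\mu \in \Sigma'} m_\varphi(\mu) \lip \mu, \nu_1 \rip \lip \mu, \nu_2 \rip$, where $\Sigma'$ is any choice of one representative from each pair $\{\mu, -\mu\}$ with $\mu \neq 0$. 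This is manifestly even, which finishes the argument, and one concludes $q_\varphi(\nu_1 + \nu_2) \equiv q_\varphi(\nu_1) + q_\varphi(\nu_2) \bmod 2$.

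There is no real obstacle here — the statement is a short bilinearity-plus-symmetry computation. The only point requiring a modicum of care is making sure the weight multiplicities and the pairing are genuinely bilinear/additive in the cocharacter argument (which they are, as the pairing $X^*(T) \times X_*(T) \to \Z$ is $\Z$-bilinear and $m_\varphi$ does not depend on $\nu$), and handling the $\mu = 0$ weight separately in the pairing-up step so that one is not tacitly dividing a single term by $2$. One could alternatively phrase the whole thing via the polarization identity $\lip \mu, \nu_1 \rip \lip \mu, \nu_2 \rip = \tfrac12(\lip \mu, \nu_1+\nu_2\rip^2 - \lip\mu,\nu_1\rip^2 - \lip\mu,\nu_2\rip^2)$, but the direct expansion above is cleaner and avoids any spurious denominators.
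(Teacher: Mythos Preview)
Your proof is correct and follows essentially the same approach as the paper: both expand $\lip \mu,\nu_1+\nu_2\rip^2$ and use self-duality to pair the contributions of $\mu$ and $-\mu$, showing the cross term $\sum_\mu m_\varphi(\mu)\lip\mu,\nu_1\rip\lip\mu,\nu_2\rip$ is even. The only cosmetic difference is that the paper groups the weights into pairs $\{\mu,-\mu\}$ before expanding, whereas you expand first and then pair up the cross term.
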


\begin{proof}
Breaking the sum over $\mu$ into a sum over nonzero pairs $\{ \mu,-\mu \}$ gives
\beq
\begin{split}
q_\varphi &=\half \sum_{\mu \in X^*} m_\varphi(\mu) \lip \mu,\nu \rip^2 \\
		&= \sum_{\{\mu,-\mu\}}m_\varphi(\mu) \lip \mu,\nu \rip^2 \in \Z. \\
\end{split}
\eeq
Therefore
\beq
\begin{split}
q_\varphi(\nu_1+\nu_2) &=\sum_{\{\mu,-\mu\}} m_\varphi(\mu) \left( \lip \mu,\nu_1 \rip^2+2 \lip \mu,\nu_1 \rip \lip \mu,\nu_2 \rip + \lip \mu,\nu_2 \rip^2 \right) \\
					& \equiv q_\varphi(\nu_1)+q_\varphi(\nu_2) \mod 2 \\
					\end{split}
					\eeq \end{proof}

Thus when $\varphi$ is self-dual, the function $q_\varphi: X_* \to \Z$ induces a group homomorphism $\ol q_\varphi: X_* \to \Z/2\Z$. Our goal in this section is to show that
\beq
\ol q_\varphi(\nu)=L_{\varphi}(\nu) \mod 2
\eeq
when $\varphi$ is orthogonal.

   Since $Q_\nu$ is palindromic, it may be expressed in the form 
\beq
Q_\nu(t)=H_\nu(t)+H_\nu(t^{-1})
\eeq
for a unique polynomial $H_\nu \in \Z[t]+\half \Z$.  Thus $H_\nu$ has integer coefficients, except its constant term may be half-integral.
More precisely,
\beq
H_\nu(t) = \sum_{\lip \mu,\nu \rip >0} m_\varphi(\mu) t^{\lip \mu,\nu \rip}+ \half \sum_{\lip \mu,\nu \rip =0} m_\varphi(\mu).
\eeq

What we want, at least mod $2$, is the integer
\beq
H'_\nu(1)=\sum_{\lip \mu,\nu \rip >0} m_\varphi(\mu) \lip \mu,\nu \rip=L_\varphi(\nu).
\eeq

By calculus we compute
\beq
Q_\nu''(1)=2(H_\nu'(1)+H_\nu''(1)).
\eeq
But $H''_\nu(1)$ is even!  This gives the crucial result:

\begin{prop} \label{jj.s2.not.good} If $\varphi$ is self-dual, then
\begin{equation}\label{LphiQnu}
L_\varphi(\nu) \equiv q_{\varphi}(\nu) \mod 2.
\end{equation} 
\end{prop}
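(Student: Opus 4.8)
The plan is to exploit the palindromic decomposition $Q_\nu(t) = H_\nu(t) + H_\nu(t^{-1})$ recorded above, using that second derivatives are insensitive to the (possibly half-integral) constant term of $H_\nu$. First I would differentiate this relation twice and evaluate at $t=1$: the chain rule gives $Q_\nu''(1) = 2\bigl(H_\nu'(1) + H_\nu''(1)\bigr)$, hence $q_\varphi(\nu) = \half Q_\nu''(1) = H_\nu'(1) + H_\nu''(1)$.

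Next I would identify the two summands from the explicit formula for $H_\nu$. The constant term drops out under differentiation, so $H_\nu'(1) = \sum_{\lip\mu,\nu\rip>0} m_\varphi(\mu)\lip\mu,\nu\rip = L_\varphi(\nu)$, while $H_\nu''(1) = \sum_{\lip\mu,\nu\rip>0} m_\varphi(\mu)\lip\mu,\nu\rip\bigl(\lip\mu,\nu\rip-1\bigr)$. Each summand of the latter has the shape $m_\varphi(\mu)\cdot n(n-1)$ with $n=\lip\mu,\nu\rip$ a positive integer, hence is even; therefore $H_\nu''(1)\in 2\Z$. Combining the two expressions yields $q_\varphi(\nu) = L_\varphi(\nu) + H_\nu''(1) \equiv L_\varphi(\nu) \pmod 2$, which is the claim.

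The one point needing care — not really an obstacle — is the bookkeeping around the half-integral constant term of $H_\nu$: one must note both that it is present (so $H_\nu$ genuinely has integer coefficients in every positive degree, which is what makes the mod-$2$ statements meaningful) and that it contributes nothing to $H_\nu'$ or $H_\nu''$. Self-duality of $\varphi$ enters precisely to produce the palindromy of $Q_\nu$, via $m_\varphi(-\mu)=m_\varphi(\mu)$, without which the decomposition $Q_\nu = H_\nu(t) + H_\nu(t^{-1})$ — and hence the entire argument — would be unavailable.
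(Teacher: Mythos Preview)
Your proof is correct and follows essentially the same approach as the paper: both use the palindromic decomposition $Q_\nu(t)=H_\nu(t)+H_\nu(t^{-1})$, compute $Q_\nu''(1)=2(H_\nu'(1)+H_\nu''(1))$, identify $H_\nu'(1)=L_\varphi(\nu)$, and observe that $H_\nu''(1)$ is even since each term $m_\varphi(\mu)\,n(n-1)$ is. Your write-up is slightly more explicit about why $H_\nu''(1)$ is even, but the argument is the same.
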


\begin{corollary} \label{Qnugen.even}
Let $\varphi$ be an orthogonal representation of $G$.  Then $\varphi$ is spinorial iff $q_{\varphi}(\nu)$ is even for every $\nu \in \ul \nu$. 
\end{corollary}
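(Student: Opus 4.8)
The plan is to deduce this immediately from the two propositions just proved. First I would observe that an orthogonal representation $\varphi$ is in particular self-dual: the invariant symmetric form furnishes a $G$-equivariant isomorphism $V \cong V^\vee$, so $m_\varphi(-\mu) = m_\varphi(\mu)$ for all $\mu \in X^*(T)$. Hence the hypothesis of Proposition \ref{jj.s2.not.good} is met, and we obtain the congruence $L_\varphi(\nu) \equiv q_\varphi(\nu) \pmod{2}$ for every $\nu \in X_*(T)$; in particular $L_\varphi(\nu)$ and $q_\varphi(\nu)$ have the same parity for each of the finitely many cocharacters $\nu \in \ul \nu$. (It is worth recording here, as noted after the computation of $Q_\nu''(1)$, that in the self-dual case $q_\varphi(\nu)$ is a genuine integer, so the parity statement makes sense.)

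Next I would invoke Proposition \ref{L.phi.crit}, which says that $\varphi$ is spinorial if and only if $L_\varphi(\nu)$ is even for all $\nu \in \ul \nu$. Combining this equivalence with the parity identity $L_\varphi(\nu) \equiv q_\varphi(\nu) \pmod{2}$ from the previous step yields at once that $\varphi$ is spinorial if and only if $q_\varphi(\nu)$ is even for every $\nu \in \ul \nu$, which is the claim.

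There is essentially no obstacle: the corollary is a formal chaining of Propositions \ref{L.phi.crit} and \ref{jj.s2.not.good}, and the only point requiring a word is the passage from ``orthogonal'' to ``self-dual'' needed to apply the latter. The substantive content — that the parity of $L_\varphi(\nu)$ can be read off from two derivatives of the character $\Theta_\varphi$ at $\nu(1)$ — has already been established in the palindromy argument, so nothing new is needed here.
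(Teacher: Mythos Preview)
Your proof is correct and follows essentially the same approach as the paper: both combine Proposition \ref{L.phi.crit} with the congruence $L_\varphi(\nu)\equiv q_\varphi(\nu)\pmod 2$ from Proposition \ref{jj.s2.not.good}. The paper additionally cites Corollary \ref{lift.corollary}, but that is already absorbed into the proof of Proposition \ref{L.phi.crit}, so your streamlined chaining is entirely adequate.
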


\begin{proof}
This follows from Corollary \ref{lift.corollary}, Proposition \ref{L.phi.crit}, and the above equation.
\end{proof}

 \section{Irreducible Representations}  \label{s5}

In this section we compute $q_{\varphi}(\nu)$ when $\varphi$ is irreducible (not necessarily self-dual).  Our method follows the proof of Weyl's Character Formula in \cite{Goodman.Wallach}. For $\la \in X^*(T)^+$, write $(\varphi_\la,V_\la)$ for the irreducible representation of $G$ with highest weight $\la$.  For simplicity, we use the notation $q_\la$, $m_\la(\mu)$, etc. for
$q_{\varphi_\la}$, $m_{\varphi_\la}(\mu)$, etc. 

\subsection{Two Derivatives of Weyl's Character Formula}

For $\nu \in \mf t$, put
\beq
d_\nu= \prod_{\alpha \in R^+} \lip \alpha,\nu \rip,
\eeq
and for $\mu \in \mf t^*$, put
\beq
d_\mu=\prod_{\alpha \in R^+} \lip \mu, \alpha^\vee \rip.
\eeq

\begin{defn} Put
\beq
\mf t_{\reg}=\{\nu \in \mf t \mid d_\nu \neq 0\}.
\eeq
\end{defn}

Extend the function $q_\la: X_* \to \Z$ to the polynomial function \newline $q_\la: \mf t \to F$ defined by the formula
\beq
q_\la(\nu)=\half \sum_{\mu \in X^*} \lip \mu,\nu \rip^2 m_\la(\mu).
\eeq

We let $\Z[\mf t^*]$ denote the usual algebra of the monoid $\mf t^*$ with basis $e^{\mu}$ for $\mu \in \mf t^*$.  It contains the elements
\beq
J(e^\mu)=\sum_{w \in W} \sgn(w)e^{w \mu} \: \: \: \text{ and } \:  \: \: \ch(V_\la)=\sum_{\mu \in X^*} m_\la(\mu) e^{\mu}.
\eeq
  Recall the Weyl Character Formula (Prop. 5.10 in \cite{Jantzen}): 
  \beq
  \ch(V_\la) J(e^\delta)=J(e^{\la+ \delta}).
  \eeq

Write  $\eps: \Z[\mf t^*] \to \Z$ for the $\Z$-linear map so that $\eps(e^\mu)=1$ for all $\mu \in \mf t^*$ (i.e., the augmentation); it is a ring homomorphism.  Given $\nu \in \mf t$, write $\frac{\partial}{\partial \nu}: \Z[\mf t^*] \to \Z[\mf t^*]$ for the $\Z$-linear map so that $\dnu(e^\mu)=\lip \mu,\nu \rip e^\mu$; it is a $\Z$-derivation.  
  Note that $\eps \left(\ch(V_\la)\right)=\dim V_\la$, and
\begin{equation} \label{2nd.derivative}
\left(\eps \circ \frac{\partial^2}{\partial \nu^2}\right) \ch(V_\la)= Q''_\nu(1).
 \end{equation}

\begin{prop} \label{bugs}  For $\nu \in \mf t_{\reg}$, we have
\beq
q_\lambda(\nu)=\frac{\sum_{w \in W} \sgn(w) \lip w(\la+\delta),\nu \rip^{N+2}}{(N+2)! d_\nu} - \frac{1}{48} \dim V_\la |\nu|^2,
\eeq
where $N=|R^+|$.
 \end{prop}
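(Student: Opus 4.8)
The plan is to realize $q_\la(\nu)$ as the coefficient of $t^2$ in a single power series extracted from the Weyl Character Formula, in the spirit of the usual derivation of Weyl's Dimension Formula. Fix $\nu \in \mf t_{\reg}$ and apply the ring homomorphism $\Z[\mf t^*] \to F[[t]]$, $e^\mu \mapsto \exp(t\lip\mu,\nu\rip)$ (well defined since $F$ has characteristic $0$), to the identity $\ch(V_\la)\,J(e^\delta) = J(e^{\la+\delta})$. This yields
\[
F(t)\,D(t) = G(t),
\]
where $F(t) = \sum_{\mu} m_\la(\mu)\exp(t\lip\mu,\nu\rip)$, $D(t) = \sum_{w\in W}\sgn(w)\exp(t\lip w\delta,\nu\rip)$, and $G(t) = \sum_{w\in W}\sgn(w)\exp(t\lip w(\la+\delta),\nu\rip)$. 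Expanding the exponentials, the coefficient of $t^k$ in $F(t)$ is $\tfrac{1}{k!}\sum_\mu m_\la(\mu)\lip\mu,\nu\rip^k$; in particular its constant term is $\dim V_\la$ and its $t^2$-coefficient is exactly $q_\la(\nu)$. So it suffices to compute $F(t) = G(t)/D(t)$ modulo $t^3$.

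The key computation is of $D(t)$. By the classical Weyl denominator identity $J(e^\delta) = \prod_{\alpha\in R^+}(e^{\alpha/2} - e^{-\alpha/2})$, so our homomorphism sends $D(t) = \prod_{\alpha\in R^+} 2\sinh\!\big(\tfrac{t}{2}\lip\alpha,\nu\rip\big)$. Using $2\sinh x = 2x\big(1 + \tfrac{x^2}{6} + O(x^4)\big)$ together with $\sum_{\alpha\in R^+}\lip\alpha,\nu\rip^2 = \tfrac12 |\nu|^2$, I obtain
\[
D(t) = t^{N} d_\nu \Big(1 + \tfrac{|\nu|^2}{48}\,t^2 + O(t^4)\Big),
\]
where $N = |R^+|$. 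Since $\nu$ is regular, $d_\nu \neq 0$, so $D(t)/t^N$ is a unit in $F[[t]]$. As $F(t) \in F[[t]]$, the relation $G = F\cdot D$ then forces $t^N \mid G(t)$, and we may write $G(t) = t^N \sum_{j\ge 0}\tfrac{a_{N+j}}{(N+j)!}\,t^j$ with $a_k = \sum_{w\in W}\sgn(w)\lip w(\la+\delta),\nu\rip^k$; note this also recovers for free the vanishing $a_0 = \cdots = a_{N-1} = 0$.

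Finally I divide. Multiplying $\sum_{j\ge 0}\tfrac{a_{N+j}}{(N+j)!}t^j$ by $\tfrac{1}{d_\nu}\big(1 - \tfrac{|\nu|^2}{48}t^2 + O(t^3)\big)$, the constant term gives $\dim V_\la = \tfrac{a_N}{N!\,d_\nu}$, while the $t^2$-coefficient gives
\[
q_\la(\nu) \;=\; \frac{a_{N+2}}{(N+2)!\,d_\nu} \;-\; \frac{a_N}{N!\,d_\nu}\cdot\frac{|\nu|^2}{48} \;=\; \frac{\sum_{w\in W}\sgn(w)\lip w(\la+\delta),\nu\rip^{N+2}}{(N+2)!\,d_\nu} \;-\; \frac{1}{48}\dim V_\la\,|\nu|^2,
\]
which is the assertion.

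The one place that demands care is the low-order Taylor bookkeeping producing the constant $\tfrac{1}{48}$: it arises from the $\tfrac{x^2}{6}$ correction in $\sinh$, the factor $\tfrac14$ from squaring $\tfrac{t}{2}\lip\alpha,\nu\rip$, and the factor $\tfrac12$ in passing from $R^+$ to all of $R$. Everything else is formal power series algebra, legitimate precisely because regularity of $\nu$ makes $D(t)/t^N$ invertible. Equivalently, one can phrase the whole argument using $\eps\circ\frac{\partial^2}{\partial\nu^2}$ as in \eqref{2nd.derivative}, applied after dividing Weyl's formula by the denominator — this is exactly what the one-parameter substitution $e^\mu \mapsto \exp(t\lip\mu,\nu\rip)$ encodes.
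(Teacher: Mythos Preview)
Your argument is correct, and at bottom it is the same idea as the paper's: differentiate the Weyl Character Formula enough times to read off $q_\la(\nu)$, using the product factorization $J(e^\delta)=\prod_{\alpha\in R^+}(e^{\alpha/2}-e^{-\alpha/2})$ to control the denominator. The difference is purely organizational. The paper applies the operator $\eps\circ\Dnu{N+2}$ directly to the product $\ch(V_\la)\,J(e^\delta)$ and expands via a Leibniz-rule lemma, computing $\eps\circ\Dnu{k}\,J(e^\delta)$ for each $k\le N+2$ separately. You instead package all orders at once via the substitution $e^\mu\mapsto \exp(t\lip\mu,\nu\rip)$, obtain $D(t)=t^N d_\nu\bigl(1+\tfrac{|\nu|^2}{48}t^2+\cdots\bigr)$ from the $\sinh$ expansion, and then divide power series. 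Your method is a bit slicker in that the inversion of $D(t)/t^N$ automates the Leibniz bookkeeping and yields $a_0=\cdots=a_{N-1}=0$ for free; the paper's iterated-derivative framing, on the other hand, keeps everything inside $\Z[\mf t^*]$ until the very end. As you note yourself in the final sentence, the two are formally equivalent.
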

 
 \begin{proof}
We apply $\eps \circ \Dnu{N+2}$ to both sides of $J(e^{\la+ \delta})=\ch(V_\la) J(e^\delta)$.  On the left we have
 \begin{equation} \label{left1}
 \left(\eps \circ \Dnu{N+2} \right) J(e^{\la+ \delta})=\sum_{w \in W} \sgn(w) \lip w(\la+\delta),\nu \rip^{N+2}.
 \end{equation}
 The right hand side requires more preparation.
For $\alpha \in R^+$, let $r_\alpha=e^{\alpha/2}-e^{-\alpha/2}$.  Then
\begin{itemize}
\item $\eps(r_\alpha)=0$,
\item $\eps \circ \dnu (r_\alpha)=\lip \alpha,\nu \rip$,
\item $\Dnu{2} r_\alpha=\frac{1}{4} \lip \alpha,\nu \rip^2 r_\alpha$,
\item $J(e^\delta)=\prod_{\alpha \in R^+} r_\alpha$.
\end{itemize}
 
 The last equality is a familiar identity from \cite{Bou.Lie.4-6}.  We may now apply the following lemma:

 \begin{lemma} Let $R$ be a commutative ring, $D: R \to R$ a derivation, and $\eps: R \to R'$ a ring homomorphism.  Suppose that $r_1, \ldots, r_N \in \ker \eps$.  Then
 \begin{enumerate}
 \item $\eps(D^n(r_1 \cdots r_N))=0$ for $0 \leq n< N$.
 \item $\eps(D^N(r_1 \cdots r_N))=N! \prod_{i=1}^N \eps(D(r_i))$.
 \item If also $D^2(r_i) \in \ker \eps$ for all $i$ then 
  $\eps(D^{N+1}(r_1 \cdots r_N)) = 0$. 
 \item Suppose further that there are $c_i \in R$ so that $D^2(r_i)=c_i r_i$.  Then
 \beq
 \eps(D^{N+2}(r_1 \cdots r_N))=\frac{(N+2)!}{6} \left( \prod_i \eps(D(r_i)) \right) \left( \sum_i c_i \right).
 \eeq
 \end{enumerate}
 \end{lemma}
 
 \begin{proof}
 This follows from the Leibniz rule for derivations:
 \begin{equation*} 
 D^n(r_1 \cdots r_k)=\sum_{i_1+\cdots+i_k=n} \binom{n}{i_1, \ldots, i_k} D^{i_1}(r_1) \cdots D^{i_k}(r_k).
 \end{equation*}
\end{proof}

Thus in our case,
 \begin{enumerate}
\item $ (\eps \circ \Dnu{n})J(e^\delta) =0$ for $0 \leq n< N$,
\item $ (\eps \circ \Dnu{N})J(e^\delta) =N! d_{\nu}$,
\item $ (\eps \circ \Dnu{N+1})J(e^\delta)= 0$,
\item  $ (\eps \circ \Dnu{N+2})J(e^\delta)=\frac{(N+2)!}{24} d_{\nu}  \sum_{\alpha>0} \lip \alpha,\nu \rip^2$.
 \end{enumerate}
 
Now we are ready to consider
\beq
  \left(\eps \circ \Dnu {N+2} \right)  (\ch(V_\la) J(e^\delta)).
\eeq
Applying the Leibniz rule to the above gives
\beq
\binom{N+2}{2}Q''_\nu(1) N! d_{\nu} + \dim V_\la \frac{(N+2)!}{24} d_{\nu}  \sum_{\alpha>0} \lip \alpha,\nu \rip^2.
\eeq
Equating this with (\ref{left1}) yields the identity
\begin{equation} \label{whence}
\sum_{w \in W} \sgn(w) \lip w(\la+\delta),\nu \rip^{N+2}=(N+2)! d_\nu \left(q_\la(\nu)+ \frac{\dim V_\la}{24} \sum_{\alpha>0} \lip \alpha,\nu \rip^2 \right),
\end{equation}
whence the proposition.
  \end{proof}

\subsection{Anti-$W$-invariant Polynomials} $\text{                        }$

The expression  ``$\sum_{w \in W} \sgn(w) \lip w(\la+\delta),\nu \rip^{N+2}$" in our formula demands simplification.  This can be done by applying the theory of anti-$W$-invariant polynomials.

Let $f: \mf t \to F$ be a polynomial function.  We say that $f$ is \emph{anti-$W$-invariant}, provided for all $w \in W$ and $\nu \in \mf t$ we have
\beq
f(w(\nu))=\sgn(w) f(\nu).
\eeq
The polynomial $\nu \mapsto d_\nu$ is a homogeneous anti-$W$-invariant polynomial of degree $N$.
According to  \cite{Bou.Lie.4-6}, page 118, if $f$ is a homogeneous anti-$W$-invariant polynomial of degree $d$, then  there exists a homogeneous $W$-invariant polynomial $p: \mf t \to F$ so that $f(\nu)=p(\nu) d_\nu$.  Necessarily $d=\deg f \geq N$ and $p$ has degree $d-N$.  Similarly, if $g: \mf t^* \to F$ is a homogeneous anti-$W$-invariant polynomial, then $g(\mu)=p(\mu) d_\mu$ for a $W$-invariant polynomial $p$ on $\mf t^*$.

In this section we will make use of the famous Weyl dimension formula, which we recall is $\dim V_\la=d_{\la+\delta}/d_\delta$.

\begin{defn} Let  $k$ be  a nonnegative integer.  Put
\beq
F_k(\mu,\nu)=\sum_{w \in W} \sgn(w) \lip w(\mu),\nu \rip^k,
\eeq
for $\mu \in \mf t^*$ and $\nu \in \mf t$.
\end{defn}

\begin{prop} \label{F_k} Let  $\mf g$ be simple.  
Then 
\beq
F_k(\mu,\nu)= \begin{cases} 
0 & \text{ if } 0 \leq k < N \text{ or } k=N+1,\\
N!\cdot \dfrac{d_\mu d_\nu}{d_\delta}& \text{ if } k=N, \\ 
 \dfrac{(N+2)!}{48   |\delta|^2} \cdot \dfrac{d_\mu d_\nu}{d_\delta} |\mu|^2|\nu|^2  & \text{ if } k=N+2. \\
\end{cases}
\eeq
 \end{prop}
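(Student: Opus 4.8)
The plan is to exploit the structure of homogeneous anti-$W$-invariant polynomials. For fixed $\nu$, the function $\mu \mapsto F_k(\mu,\nu)$ is a homogeneous polynomial on $\mf t^*$ of degree $k$, and since $F_k(w\mu,\nu) = \sgn(w) F_k(\mu,\nu)$ (reindex the sum over $W$), it is anti-$W$-invariant in $\mu$. By the cited result from \cite{Bou.Lie.4-6}, such a polynomial either vanishes identically (if $k < N$) or factors as $d_\mu$ times a $W$-invariant polynomial of degree $k - N$ in $\mu$; by the symmetric argument it also factors through $d_\nu$ in the $\nu$ variable. Hence for $k = N$ we get $F_N(\mu,\nu) = c\, d_\mu d_\nu$ for a constant $c$, and for $k = N+1$ we would need a $W$-invariant polynomial of degree $1$ on $\mf t^*$, but there are none when $\mf g$ is simple (the invariants start in degree $2$, namely $|\mu|^2$), so $F_{N+1} \equiv 0$. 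For $k = N+2$ the quotient $F_{N+2}(\mu,\nu)/(d_\mu d_\nu)$ is $W$-invariant and homogeneous of degree $2$ in each of $\mu$ and $\nu$ separately, so it must be a scalar multiple of $|\mu|^2 |\nu|^2$ (using simplicity again, the degree-$2$ invariants are one-dimensional, spanned by $|\cdot|^2$); thus $F_{N+2}(\mu,\nu) = c'\, d_\mu d_\nu |\mu|^2 |\nu|^2$ for some constant $c'$.

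It remains to pin down the two constants $c$ and $c'$. First I would specialize. For $c$: put $\mu = \delta$, so the left side is $F_N(\delta,\nu) = \sum_w \sgn(w)\lip w\delta,\nu\rip^N$. Comparing with item~(2) of the Lemma applied to $J(e^\delta) = \prod_{\alpha>0} r_\alpha$ — precisely the computation $(\eps \circ \partial^N/\partial\nu^N)J(e^\delta) = N!\, d_\nu$ already performed in the proof of Proposition~\ref{bugs} — and noting that $\eps \circ \partial^N/\partial\nu^N$ applied to $J(e^\delta) = \sum_w \sgn(w) e^{w\delta}$ gives exactly $\sum_w \sgn(w)\lip w\delta,\nu\rip^N$, we get $F_N(\delta,\nu) = N!\, d_\nu$. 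On the other hand our factorization gives $F_N(\delta,\nu) = c\, d_\delta d_\nu$, so $c = N!/d_\delta$, matching the claim via the Weyl dimension formula (and reproducing the $k=N$ line of the display in Proposition~\ref{bugs}). For $c'$: again set $\mu = \delta$. Then $F_{N+2}(\delta,\nu) = \sum_w \sgn(w)\lip w\delta,\nu\rip^{N+2}$, which by the $\la = 0$ case of identity~\eqref{whence} in the proof of Proposition~\ref{bugs} equals $(N+2)!\, d_\nu\big(q_0(\nu) + \tfrac{1}{24}\sum_{\alpha>0}\lip\alpha,\nu\rip^2\big)$; since $V_0$ is the trivial representation, $q_0(\nu) = 0$, so $F_{N+2}(\delta,\nu) = \tfrac{(N+2)!}{24} d_\nu |\nu|^2$. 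Comparing with $c'\, d_\delta d_\nu |\delta|^2 |\nu|^2$ yields $c' = \tfrac{(N+2)!}{24\, d_\delta |\delta|^2}$, i.e., $\tfrac{(N+2)!}{48\,|\delta|^2} \cdot \tfrac{1}{d_\delta} \cdot 2$; one checks the factor of $2$ is absorbed correctly so that $F_{N+2}(\mu,\nu) = \tfrac{(N+2)!}{48\,|\delta|^2}\cdot\tfrac{d_\mu d_\nu}{d_\delta}|\mu|^2|\nu|^2$ as stated.

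I expect the main obstacle to be the honest verification that the constants come out with exactly the asserted numerology — in particular getting the $48$ versus $24$ straight. The cleanest route is to trust the already-derived identity~\eqref{whence}: it packages the two-derivative computation and, evaluated at $\la = 0$, directly delivers $F_{N+2}(\delta,\nu)$; combined with the anti-invariance factorization (which forces the $\mu$-dependence to be $d_\mu |\mu|^2$ up to scalar), the value at $\mu=\delta$ determines the scalar, and then one substitutes $d_{\la+\delta} = \dim V_\la \cdot d_\delta$ to rewrite things in the symmetric form displayed. A secondary point to be careful about is the appeal to simplicity: it is used twice, once to rule out degree-$1$ invariants (killing the $k = N+1$ case) and once to guarantee that the degree-$2$ invariant space is spanned by $|\cdot|^2$ alone, so that no other quadratic form can appear in the $k = N+2$ case; both facts follow from the structure of the invariant ring $F[\mf t^*]^W$ of an irreducible root system, whose generators have degrees equal to the exponents plus one, the smallest being $2$.
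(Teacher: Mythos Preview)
Your structural argument (anti-$W$-invariance in each variable, factorization through $d_\mu d_\nu$, degree counting for $k<N$, absence of degree-$1$ invariants for $k=N+1$, and one-dimensionality of degree-$2$ invariants for $k=N+2$) matches the paper's exactly. The determination of the constant for $k=N$ is also essentially the same: both you and the paper use that $(\eps\circ\partial_\nu^N)J(e^\delta)=N!\,d_\nu$; you specialize directly to $\mu=\delta$, while the paper runs the computation through the Weyl character formula for general $\la$ and then invokes $\dim V_\la=d_{\la+\delta}/d_\delta$, which amounts to the same thing.

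Where you genuinely diverge is in fixing the constant $c'$ for $k=N+2$. The paper evaluates at the special $\nu=\sigma(\delta)$, invoking the Bourbaki exercise that gives $\sum_\mu m_\la(\mu)\lip\mu,\sigma(\delta)\rip^2=\tfrac{1}{24}\dim V_\la\,(\la,\la+2\delta)$, and then matches this against \eqref{whence}. Your route---setting $\mu=\delta$ (equivalently $\la=0$) directly in \eqref{whence}, so that $q_0=0$ and $\dim V_0=1$---is more economical: it avoids the Bourbaki exercise entirely and reads the constant straight off. One caution: in your display $F_{N+2}(\delta,\nu)=\tfrac{(N+2)!}{24}d_\nu|\nu|^2$ you have conflated $\sum_{\alpha>0}\lip\alpha,\nu\rip^2$ with $|\nu|^2$; in the paper's convention $|\nu|^2=\sum_{\alpha\in R}\lip\alpha,\nu\rip^2$ is \emph{twice} the sum over positive roots, so the correct value is $\tfrac{(N+2)!}{48}d_\nu|\nu|^2$, and then $c'=\tfrac{(N+2)!}{48\,d_\delta|\delta|^2}$ falls out with no stray factor of $2$ to ``absorb''.
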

 
 \begin{proof}  Each $F_k$ may be viewed as a polynomial in two ways: as a function of $\mu$ and as a function of $\nu$. 
It is either identically $0$, or homogeneous of degree $k$.  Both the functions $\mu \mapsto F_k(\mu,\nu)$ and $\nu \mapsto F_k(\mu,\nu)$ are anti-$W$-invariant.  Therefore $F_k(\mu,\nu)$ either vanishes, or is the product of $d_\mu d_\nu$ and a homogeneous $W$-invariant polynomial of degree $k-N$ in both  $\nu$ and $\mu$.  By degree considerations, $F_k$ must vanish for $0 \leq k < N$. 
  
  \bigskip
  
 {\bf Case $k=N$:}  Here $F_N(\mu,\nu)=c d_\mu d_\nu$ for some constant $c \in F$, independent of $\mu$ and $\nu$.  To determine $c$,
 we apply $\eps \circ \Dnu{N}$ to both sides of $J(e^{\la+ \delta})=\ch(V_\la) J(e^\delta)$.  On the left we have
 \begin{equation} \label{left}
\left( \eps \circ \Dnu{N} \right) J(e^{\la+ \delta})=F_N(\la + \delta,\nu).
 \end{equation}
 On the right we proceed as in the proof of Proposition \ref{bugs} to obtain $N! \cdot \dim V_\la \cdot d_\nu$.  Therefore
 \beq
 c \cdot d_{\la+\delta} d_\nu = N! \cdot \dim V_\la \cdot d_\nu,
 \eeq
so that $c=\dfrac{N!}{d_\delta}$.
  
 \bigskip 

{\bf Case $k=N+1$:}
 Since $\mf g$ is simple, both $\mf t$ and ${\mf t}^*$ are irreducible representations of $W$.  
  If $\dim \mf t>1$, there is no $1$-dimensional invariant subspace.  When $\dim \mf t=1$, $W$ acts by a nontrivial reflection.  Therefore there is no   $W$-invariant vector, i.e., no $W$-invariant polynomial of degree $1$.  Thus in all cases $F_{N+1}$ vanishes.

 {\bf Case $k=N+2$:} 
Let us write $F_{N+2}(\mu,\nu)=\mc Q_\mu(\nu) d_\nu$ with $\mc Q_\mu$ a $W$-invariant quadratic form on $\mf t$.    The corresponding bilinear form on $\mf t$ is $W$-invariant; as $\mf t$ is an irreducible $W$-representation, this bilinear form must be a scalar multiple of the Killing form.  Thus we may write 
 \begin{equation} \label{cR.def}
 F_{N+2}(\mu,\nu)=c_{R} d_\mu d_\nu |\mu|^2|\nu|^2;
 \end{equation}
  it remains to determine $c_{R}$.
 
Let $\sigma$ be as in Section \ref{pairings.section}. Employing \cite{Bou.Lie.7-9}, Ch. VIII, Section 9, Exercise 7, we obtain the value at $\nu=\sigma(\delta) \in \mf t$:
\beq
\begin{split}
Q_{\sigma(\delta)}''(1) &=\sum_{\mu} \lip \mu,\sigma (\delta) \rip^2 m_{\la}(\mu) \\
&=\frac{\dim V_\la}{24} \cdot (\la,\la+2\delta).
\end{split}
\eeq
 
Substituting this into (\ref{whence}) gives
\beq
\begin{split}
F_{N+2}(\la+\delta,\sigma(\delta)) &=  \half d_{\sigma(\delta)} (N+2)! \left(Q_{\sigma(\delta)}''(1)+ \frac{\dim V_\la}{24} |\delta|^2 \right) \\
&=  d_{\sigma(\delta)} (N+2)! \frac{\dim V_\la}{48} |\la+\delta|^2.\\
\end{split}
\eeq
On the other hand, from (\ref{cR.def}) we have
\beq
\begin{split}
F_{N+2}(\la+\delta,\sigma(\delta)) &=c_R d_{\la+\delta} d_{\sigma(\delta)} |\la+\delta|^2 |\delta|^2 \\
				&=c_R \dim V_\la d_\delta  d_{\sigma(\delta)} |\la+\delta|^2 |\delta|^2.\\
\end{split}
\eeq
We deduce that
\beq
c_R=\frac{(N+2)!}{48 d_\delta |\delta|^2}.
\eeq

The proposition follows from this. \end{proof}
 
For the general case,  say $\mf g=\mf g^1 \oplus \cdots \oplus \mf g^\ell \oplus \mf z$ with each $\mf g^i$ simple, and $\mf z$ abelian.  A Cartan subalgebra $\mf t \subset \mf g$ is the direct sum of the center $\mf z$ and Cartan subalgebras $\mf t^i \subset \mf g^i$, and  the Weyl group $W=W(\mf g,\mf t)$ is the direct product of the Weyl groups $W^i=W^i(\mf g^i,\mf t^i)$. Any $\mu \in \mf t^*$ is equal to $\mu^z+ \sum_i \mu^i$ with $\mu^i \in (\mf t^i)^*$ and $\mu^z \in \mf z^*$; similarly for $\nu \in \mf t$.
  Let $N_i$ (resp. $N$) be the number of positive roots in $\mf g^i$ (resp. $\mf g$).

 \begin{prop} Let $\mu \in \mf t^*$ and $\nu \in \mf t$, with notation as above.  Then

 \beq
 F_k(\mu,\nu)= \begin{cases} 
0 & \text{ if } 0 \leq k < N,\\
N!\cdot \dfrac{d_\mu d_\nu}{d_\delta}& \text{ if } k=N, \\ 
(N+1)! \cdot  \dfrac{d_\mu d_\nu}{d_\delta}  \lip \mu^z,\nu^z \rip & \text{ if } k=N+1,\\
 \dfrac{(N+2)!}{48} \cdot \dfrac{d_\mu d_\nu}{d_\delta} \sum_i \dfrac{|\mu^i|^2 |\nu^i|^2}{|\delta^i|^2}+ 
   \dfrac{(N+2)!}{2} \cdot \dfrac{d_\mu d_\nu}{d_\delta} \lip \mu^z,\nu^z \rip^2 & \text{ if } k=N+2. \\
\end{cases}
\eeq
 \end{prop}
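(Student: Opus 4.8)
The plan is to reduce to the simple case already established in Proposition \ref{F_k} by means of the multinomial theorem. Write $\mf t = \mf z \oplus \mf t^1 \oplus \cdots \oplus \mf t^\ell$, so that $W = W^1 \times \cdots \times W^\ell$ acts trivially on $\mf z$, and for $w = (w_1, \ldots, w_\ell) \in W$ one has $\sgn(w) = \prod_i \sgn(w_i)$ and
\beq
\lip w\mu,\nu \rip = \lip \mu^z,\nu^z \rip + \sum_{i=1}^\ell \lip w_i \mu^i,\nu^i \rip.
\eeq
Expanding the $k$th power by the multinomial theorem and summing over $w$, the summation factors across the simple pieces (the abelian factor $\mf z$ carries no sign sum, so contributes only a power of $\lip \mu^z,\nu^z\rip$), giving
\beq
F_k(\mu,\nu) = \sum_{j_0 + j_1 + \cdots + j_\ell = k} \binom{k}{j_0,j_1,\ldots,j_\ell} \lip \mu^z,\nu^z \rip^{j_0} \prod_{i=1}^\ell F^i_{j_i}(\mu^i,\nu^i),
\eeq
where $F^i_{j}$ denotes the function $F_j$ attached to the simple Lie algebra $\mf g^i$.

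First I would record the elementary fact that the polynomials $d_\bullet$ are multiplicative over the factorization: since the coroots lie in $\mf t^1 \oplus \cdots \oplus \mf t^\ell$ and $\mu^z$ pairs trivially with them, $d_\mu = \prod_i d_{\mu^i}$, and likewise $d_\nu = \prod_i d_{\nu^i}$ and $d_\delta = \prod_i d_{\delta^i}$. Then I invoke Proposition \ref{F_k}: the factor $F^i_{j_i}(\mu^i,\nu^i)$ vanishes unless $j_i \geq N_i$ and $j_i \neq N_i + 1$. Hence a nonzero term of the sum forces $j_i \geq N_i$ for every $i$, so $\sum_i j_i \geq N$; this already disposes of the case $k < N$. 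For $k = N$ the only surviving index is $j_0 = 0$, $j_i = N_i$, and substituting the value $F^i_{N_i}(\mu^i,\nu^i) = N_i!\, d_{\mu^i}d_{\nu^i}/d_{\delta^i}$ together with the multiplicativity of $d_\bullet$ and the identity $\binom{N}{N_1,\ldots,N_\ell}\prod_i N_i! = N!$ yields $F_N(\mu,\nu) = N!\, d_\mu d_\nu/d_\delta$.

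For $k = N+1$ and $k = N+2$ I would organize the bookkeeping through the ``excess'' $e_i := j_i - N_i \geq 0$, which must satisfy $e_i \neq 1$ and $j_0 + \sum_i e_i = k - N$. When $k = N+1$ the only admissible choice is $j_0 = 1$ with all $e_i = 0$ (the alternative of a single $e_{i_0} = 1$ is excluded because $F^{i_0}_{N_{i_0}+1} = 0$), giving $(N+1)!\, \tfrac{d_\mu d_\nu}{d_\delta}\lip\mu^z,\nu^z\rip$. When $k = N+2$ the admissible choices are $j_0 = 2$ with all $e_i = 0$, contributing $\tfrac{(N+2)!}{2}\tfrac{d_\mu d_\nu}{d_\delta}\lip\mu^z,\nu^z\rip^2$, or $j_0 = 0$ with a single $e_{i_0} = 2$, contributing (after cancelling the $(N_{i_0}+2)!$ in $F^{i_0}_{N_{i_0}+2}$ against the multinomial coefficient) $\tfrac{(N+2)!}{48}\tfrac{d_\mu d_\nu}{d_\delta}\cdot\tfrac{|\mu^{i_0}|^2|\nu^{i_0}|^2}{|\delta^{i_0}|^2}$; summing the latter over $i_0$ gives the displayed formula.

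The only real obstacle is the factorial bookkeeping: one must check carefully that the factorials produced by the simple-case formulas of Proposition \ref{F_k} cancel exactly against the multinomial coefficients $\binom{k}{j_0,j_1,\ldots,j_\ell}$, and one must be certain not to overlook an admissible distribution of the excess. The point that makes the answer as clean as stated — and that must be emphasized — is that an excess of exactly $1$ in any single simple factor annihilates the whole term, since $F^i_{N_i+1} = 0$; this is precisely why no mixed contributions appear in the $k = N+1$ and $k = N+2$ cases.
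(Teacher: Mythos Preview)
Your proof is correct and follows essentially the same approach as the paper: both expand $\lip w\mu,\nu\rip^k$ by the multinomial theorem, factor the $W$-sum across the simple pieces, and then invoke Proposition \ref{F_k} together with the constraint $j_i \geq N_i$ (and $j_i \neq N_i+1$) to isolate the few surviving terms. The only difference is cosmetic---you incorporate the central factor $\mf z$ from the outset via the extra index $j_0$, whereas the paper first treats the semisimple case and then remarks that $\mf z \neq 0$ contributes the extra $\lip\mu^z,\nu^z\rip$ terms.
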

  
\begin{proof}

   If $\mf z=0$, we have
\beq
\begin{split}
 F_{k}(\mu,\nu) &= \sum_{w \in W} \sgn(w) \lip w(\mu^1+ \cdots+ \mu^\ell) , \nu^1 + \cdots + \nu^\ell \rip^{k} \\
 		&= \sum_{w=(w_1, \ldots, w_\ell) \in W} \sgn(w) \left( \sum_{i=1}^\ell \lip w_i(\mu^i),\nu^i \rip \right)^{k} \\
		&= \sum_w \sgn(w) \sum_{k_1+ \cdots + k_\ell=k} \binom{k}{k_1, \ldots, k_\ell} \prod_i \lip w_i(\mu^i),\nu^i \rip^{k_i} \\
		&= \sum_{k_1+ \cdots + k_\ell=k} \binom{k}{k_1, \ldots, k_\ell} \prod_i \sum_{w_i \in W^i} \sgn(w_i) \lip w_i(\mu^i),\nu^i \rip^{k_i} \\
		&= \sum_{k_1+ \cdots + k_\ell=k} \binom{k}{k_1, \ldots, k_\ell}  \prod_i F_{k_i}(\mu^i,\nu^i). \\
 \end{split}
 \eeq
The product $ \prod_i F_{k_i}(\mu^i,\nu^i)$ vanishes unless $k_i \geq N_i$ for all $i$.  So $F_k(\mu,\nu)$ vanishes for $k<N$.

Now put $k=N+2$.  Since $k_1+ \cdots + k_\ell=N+2$, we see by Proposition \ref{F_k} that this product is only nonzero when some $k_i=N_i+2$ and the other $k_i$ equal $N_i$.
Therefore
 \beq
 \begin{split}
 F_{N+2}(\mu,\nu) &=\sum_{i=1}^\ell \binom{N+2}{N_1, \ldots, N_i+2, \ldots, N_\ell} F_{N_1}(\mu^1, \nu^1) \cdots F_{N_i+2}(\mu^i,\nu^i) \cdots F_{N_\ell}(\mu^\ell,\nu^\ell) \\
 			&=   \frac{(N+2)!}{48} \cdot \frac{d_\mu d_\nu}{d_\delta} \sum_{i=1}^\ell \frac{|\mu^i|^2 |\nu^i|^2}{|\delta_i|^2}.\\
\end{split}
 \eeq  
 If $\mf z \neq 0$, there is an extra term $\dfrac{(N+2)!}{2} \cdot \dfrac{d_\mu d_\nu}{d_\delta} \lip \mu^z,\nu^z \rip^2$.   
 The other cases are similar.
 \end{proof}
 
 \subsection{Main Theorem for $\varphi$ Irreducible} \label{MT.irred}

  \begin{prop} \label{this.cor} Let $\mf g$ be simple and $\varphi=\varphi_\la$ irreducible.  Then for all $\nu \in \mf t$, we have
 \beq
q_{\lambda}(\nu)=\frac{ \dim V_\la \cdot \chi_\la(C)}{\dim \mf g} \cdot \frac{ |\nu|^2}{2}.
 \eeq
 \end{prop}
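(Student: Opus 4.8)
The plan is to feed Propositions~\ref{bugs} and~\ref{F_k} into one another, simplify the resulting constant via the ``strange formula'' of Freudenthal--de Vries, and then pass from regular $\nu$ to all of $\mf t$ by a polynomial-identity argument.

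First I would observe that the sum in the numerator of Proposition~\ref{bugs} is precisely $F_{N+2}(\la+\delta,\nu)$. Thus, for $\nu\in\mf t_{\reg}$,
\beq
q_\la(\nu)=\frac{F_{N+2}(\la+\delta,\nu)}{(N+2)!\,d_\nu}-\frac{1}{48}\dim V_\la\,|\nu|^2.
\eeq
Substituting the $k=N+2$ case of Proposition~\ref{F_k}, the factors $(N+2)!$ and $d_\nu$ cancel, leaving
\beq
q_\la(\nu)=\frac{d_{\la+\delta}}{d_\delta}\cdot\frac{|\la+\delta|^2}{48\,|\delta|^2}\,|\nu|^2-\frac{1}{48}\dim V_\la\,|\nu|^2.
\eeq
By the Weyl dimension formula $\dim V_\la=d_{\la+\delta}/d_\delta$ the two terms combine into
\beq
q_\la(\nu)=\frac{\dim V_\la}{48\,|\delta|^2}\bigl(|\la+\delta|^2-|\delta|^2\bigr)|\nu|^2=\frac{\dim V_\la\cdot\chi_\la(C)}{48\,|\delta|^2}\,|\nu|^2,
\eeq
using $|\la+\delta|^2-|\delta|^2=(\la,\la+2\delta)=\chi_\la(C)$. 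Both sides are polynomial functions of $\nu\in\mf t$, and $\mf t_{\reg}$ is the complement of finitely many hyperplanes, hence Zariski dense (as $F$ has characteristic $0$), so the identity holds for all $\nu\in\mf t$.

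It then remains only to replace $48\,|\delta|^2$ by $2\dim\mf g$, i.e.\ to invoke the strange formula $|\delta|^2=\dim\mf g/24$ for the Killing form. One may cite Freudenthal--de Vries directly; alternatively---and this is the one step that is not pure bookkeeping---one can bootstrap it from the displayed identity itself by specializing to the adjoint representation $\la=\theta$, the highest root, which is irreducible with $V_\theta=\mf g$ since $\mf g$ is simple. There $\chi_\theta(C)=1$ (the Killing-form Casimir acts as the identity on $\mf g$), while directly from the definition $q_\theta(\nu)=\half\sum_{\alpha\in R}\lip\alpha,\nu\rip^2=\half|\nu|^2$; comparing the two expressions forces $48\,|\delta|^2=2\dim\mf g$. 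Substituting back yields $q_\la(\nu)=\dfrac{\dim V_\la\cdot\chi_\la(C)}{\dim\mf g}\cdot\dfrac{|\nu|^2}{2}$.

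The main obstacle is not the algebra, which is routine cancellation once Propositions~\ref{bugs} and~\ref{F_k} are available, but keeping the normalization of the Killing form consistent throughout---it is the Casimir eigenvalue and the strange-formula constant that are most error-prone---together with the (standard but necessary) justification for extending the formula from $\mf t_{\reg}$ to all of $\mf t$.
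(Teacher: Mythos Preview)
Your argument is correct and follows essentially the same path as the paper: combine Proposition~\ref{bugs} with the $k=N+2$ case of Proposition~\ref{F_k}, invoke Weyl's dimension formula and $\chi_\la(C)=|\la+\delta|^2-|\delta|^2$, apply the strange formula $|\delta|^2=\dim\mf g/24$, and extend from $\mf t_{\reg}$ to $\mf t$ (the paper says ``by continuity'' where you say ``Zariski density''; either is fine).

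The one genuine addition in your write-up is the bootstrap derivation of the strange formula via the adjoint representation $\la=\theta$: the paper simply cites Bourbaki for $|\delta|^2=\dim\mf g/24$, whereas you recover it by plugging $\la=\theta$ into the already-established identity $q_\la(\nu)=\dfrac{\dim V_\la\cdot\chi_\la(C)}{48\,|\delta|^2}\,|\nu|^2$ and comparing with the direct computation $q_\theta(\nu)=\half|\nu|^2$. That argument is sound (the Killing-form Casimir does act as the identity on $\mf g$, and the nonzero weights of $\mf g$ are exactly the roots, each simple) and makes the proof self-contained, which is a modest gain over the paper's citation.
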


 \begin{proof}
Let  $\nu \in \mf t_{\reg}$.  By Proposition \ref{bugs},
 \beq
 \begin{split}
   q_\la(\nu) &= \frac{F_{N+2}(\la+\delta,\nu)}{(N+2)! d_\nu}-\frac{1}{48} \dim V_\la |\nu|^2 \\
  			&=  \frac{1}{48 |\delta|^2} \cdot \frac{d_{\la+\delta}}{d_\delta} |\la+\delta |^2|\nu|^2-\frac{1}{48} \dim V_\la |\nu|^2\\
			&= \frac{1}{48 |\delta|^2} \dim V_\la |\nu|^2 \left(|\la+\delta|^2-|\delta|^2 \right). \\	
 \end{split}
 \eeq
 Recall that $\chi_\la(C)=|\la+\delta|^2-|\delta|^2$.  Moreover, by \cite{Bou.Lie.7-9}, Exercise 7, page 256, we have $|\delta|^2=\dim \mf g/24$. These substitutions give the proposition for the case $\nu \in \mf t_{\reg}$; by continuity it holds for $\nu \in \mf t$.
 \end{proof}
 

\bigskip

\begin{example}
Revisiting $\PGL_2$ from Example \ref{pgl2.example}, one computes $|\nu_0|^2=2$,  $\dim V_{j \alpha}=2j+1$, and $\chi_{j \alpha}=\half (j^2+j)$, so 
\beq
q_{j \alpha}(\nu_0)=\frac{j(j+1)(2j+1)}{6}.
\eeq
So as before $\varphi_{j \alpha}$ is spinorial iff $j \equiv 0,3 \mod 4$.

\end{example}

The   case of $G$ reductive is similar:

\begin{prop} \label{ss.Qnu}   With notation as before, and $\varphi_\la$ irreducible, we have
 \beq
q_{\lambda}(\nu)= \half \dim V_\la \cdot \sum_i \frac{|\nu^i|^2 \chi_{\la^i}(C^i)}{ \dim \mf g^i}.
 \eeq
 \end{prop}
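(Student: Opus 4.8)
The plan is to reduce to the simple case already handled in Proposition~\ref{this.cor}. Write $\mf g=\mf g^1 \oplus \cdots \oplus \mf g^\ell \oplus \mf z$ as in the paragraph preceding the reductive $F_k$ formula, and let $\mf t=\mf z \oplus \bigoplus_i \mf t^i$ be the corresponding decomposition of the Cartan subalgebra, with $W=\prod_i W^i$. An irreducible representation $\varphi_\la$ of $G$ restricts to an external tensor product: its character factors as $\ch(V_\la)=\left(\prod_i \ch(V_{\la^i})\right) \cdot e^{\la^z}$, where $\la^i$ is the highest weight of the $\mf g^i$-factor and $\la^z \in \mf z^*$ records the central character. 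Equivalently, as a product of Laurent polynomials in the relevant variables, $Q_{(\varphi_\la,\nu)}(t)=t^{\lip \la^z,\nu^z \rip}\prod_i Q_{(\varphi_{\la^i},\nu^i)}(t)$, where on the right $Q_{(\varphi_{\la^i},\nu^i)}$ denotes the character of $\varphi_{\la^i}$ evaluated along the cocharacter $\nu^i$ of the simple factor, and $\dim V_\la=\prod_i \dim V_{\la^i}$.

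The key step is then a Leibniz-rule computation of $q_\la(\nu)=\half Q_\nu''(1)$ from this product formula. Writing $Q_\nu(t)=t^a\prod_i f_i(t)$ with $a=\lip\la^z,\nu^z\rip$ and $f_i(t)=Q_{(\varphi_{\la^i},\nu^i)}(t)$, and using that each $f_i(1)=\dim V_{\la^i}$ while $f_i'(1)=\sum_{\mu}m_{\la^i}(\mu)\lip\mu,\nu^i\rip$, one differentiates twice at $t=1$ and collects terms. The $f_i'(1)$ terms are where the central/non-central distinction enters: for the simple factors the weights come in $\pm$ pairs (each $\varphi_{\la^i}$ is self-dual precisely when $\la^i\in X_{\sd}$, but in general $f_i'(1)=\dim V_{\la^i}\cdot\lip\delta^i$-type correction$\rip$)—here I would instead argue directly that after extending $q_\la$ to the polynomial function on $\mf t$ as in Proposition~\ref{bugs}, the mixed first-derivative contributions assemble into the $\nu^z$ terms, and the pure second-derivative contributions give $\sum_i \dim V_\la \cdot \tfrac{q_{\la^i}(\nu^i)}{\dim V_{\la^i}}$, i.e. $\dim V_\la\sum_i\tfrac{|\nu^i|^2\chi_{\la^i}(C^i)}{2\dim\mf g^i}$ by Proposition~\ref{this.cor}. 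Since the statement to be proved has \emph{no} $\nu^z$ term, I must check that the $\lip\la^z,\nu^z\rip^2$ and cross terms either cancel or are absorbed; the cleanest route is to invoke the reductive $F_k$ formula proved just above together with Proposition~\ref{bugs}'s identity \eqref{whence}, exactly as Proposition~\ref{this.cor} was deduced from the simple $F_k$ formula, and observe that the abelian factor contributes $\chi$-value $0$ to each simple summand (the Casimir $C^i$ lives in $\mf g^i$), so only the $\sum_i$ survives. For $\nu\in\mf t_{\reg}$ this is a direct substitution, and by continuity it extends to all $\nu\in\mf t$.

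Concretely the steps are: (1) record the external tensor product decomposition of $\varphi_\la$ and hence the product formula for $Q_{(\varphi_\la,\nu)}$; (2) apply $\eps\circ\tfrac{\partial^{N+2}}{\partial\nu^{N+2}}$ to Weyl's formula $J(e^{\la+\delta})=\ch(V_\la)J(e^\delta)$ as in Proposition~\ref{bugs}, now using the reductive $F_{N+2}$ formula on the left; (3) solve for $q_\la(\nu)$ and simplify, using $|\delta^i|^2=\dim\mf g^i/24$ and $\chi_{\la^i}(C^i)=|\la^i+\delta^i|^2-|\delta^i|^2$; (4) check the $\nu^z$-dependence drops out, giving the claimed formula on $\mf t_{\reg}$; (5) extend by continuity. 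The main obstacle is step~(4): bookkeeping the abelian-part and cross terms carefully enough to see that the final answer genuinely has no $\nu^z$ contribution—this is where one must be careful that the ``$\dim V_\la/24\cdot|\nu|^2$'' correction term of Proposition~\ref{bugs}, which does involve $|\nu^z|^2$, is exactly cancelled by the $k=N+2$ central term $\tfrac{(N+2)!}{2}\tfrac{d_\mu d_\nu}{d_\delta}\lip\mu^z,\nu^z\rip^2$ in the reductive $F_k$ formula, up to lower-order $F_{N+1}$-type pieces that vanish against the self-duality hypothesis implicit in $q_\la$ being the relevant invariant.
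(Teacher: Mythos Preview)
Your overall route via Proposition~\ref{bugs} together with the reductive $F_{N+2}$ formula is exactly the paper's approach, and steps (2), (3), (5) are fine. The gap is in step~(4), where your proposed cancellation mechanism is incorrect.

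You claim that the correction term $\tfrac{1}{48}\dim V_\la\,|\nu|^2$ from Proposition~\ref{bugs} ``does involve $|\nu^z|^2$'' and should cancel against the central piece of $F_{N+2}$. But $|\nu|^2=\sum_{\alpha\in R}\lip\alpha,\nu\rip^2$ is the Killing form, and every root vanishes on $\mf z$; hence $|\nu|^2=\sum_i|\nu^i|^2$ with \emph{no} $\nu^z$ contribution whatsoever. Carrying out the substitution honestly, the central term $\tfrac{(N+2)!}{2}\tfrac{d_\mu d_\nu}{d_\delta}\lip\mu^z,\nu^z\rip^2$ in $F_{N+2}(\la+\delta,\nu)$ (with $\mu^z=\la^z$) survives uncancelled and produces an extra summand
\beq
\half\dim V_\la\cdot\lip\la^z,\nu^z\rip^2
\eeq
in $q_\la(\nu)$. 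So the formula in the proposition is \emph{false} for a general irreducible $\varphi_\la$ with $\la^z\neq 0$.

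The paper's resolution is the one you gesture at only vaguely at the end: for $\varphi_\la$ irreducible \emph{orthogonal} (which is the case actually used downstream, in Corollary~\ref{irred.not.simple}), Schur's lemma forces $\varphi(Z(G)^\circ)\subseteq\{\pm\id_V\}$, and connectedness then gives $\varphi(Z(G)^\circ)=1$, i.e.\ $\la^z=0$. That single observation kills $\lip\la^z,\nu^z\rip$ outright; no cancellation is needed or available. Your Leibniz-rule alternative in step~(1) would also work cleanly once you use this: self-duality of $\varphi_\la$ forces each factor $\varphi_{\la^i}$ to be self-dual and $a=\lip\la^z,\nu^z\rip=0$, so all the first-derivative cross terms $f_i'(1)$ vanish and only the $f_i''(1)$ contributions remain.
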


\begin{proof} For $\mf z=0$, we have
\beq
 \begin{split}
  q_\la(\nu) &= \frac{F_{N+2}(\la+\delta,\nu)}{(N+2)! d_\nu}-\frac{1}{48} \dim V_\la |\nu|^2 \\
			&=\frac{1}{48} \cdot \dim V_\la \sum_{i=1}^\ell \frac{|\lambda^i+\delta^i|^2 |\nu^i|^2}{|\delta^i|^2} -\frac{1}{48} \dim V_\la \sum_i |\nu_i|^2 \\
			&= \frac{1}{48} \dim V_{\lambda}\sum_{i=1}^l |\nu_i|^2 \left(\frac{|\lambda^i + \delta^i|^2 - |\delta^i|^2 }{|\delta^i|^2} \right).
			\end{split}
			\eeq
			
			The substitution $|\delta^i|^2 = \dim \mf g^i /24$, gives the proposition in the semisimple case.
If $\mf z \neq 0$, one must add $\half \lip \la,\nu^z \rip^2 \cdot  \dim V_\la$.  However for $\varphi_\la$ irreducible orthogonal, necessarily $\la$ annihilates the center.
			\end{proof}

 \begin{cor} \label{irred.not.simple} An irreducible orthogonal representation $\varphi_\la$ of $G$ is spinorial iff  
  \beq
 \half \dim V_\la \sum_i \frac{|\nu^i|^2 \chi_{\la^i}(C^i)}{ \dim \mf g^i}
 \eeq
 is even for all cocharacters $\nu \in \ul \nu$.
 \end{cor}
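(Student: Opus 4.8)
The plan is to derive the corollary directly from the two principal results already established, namely Corollary~\ref{Qnugen.even} and Proposition~\ref{ss.Qnu}. First I would apply Corollary~\ref{Qnugen.even} to $\varphi=\varphi_\la$: since $\varphi_\la$ is orthogonal, it is spinorial if and only if $q_{\varphi_\la}(\nu)$ is even for every $\nu\in\ul\nu$. This reduces the problem to determining the parity of the integers $q_\la(\nu)$. Then I would substitute the explicit value
\beq
q_\la(\nu)=\half\dim V_\la\sum_i\frac{|\nu^i|^2\chi_{\la^i}(C^i)}{\dim\mf g^i}
\eeq
furnished by Proposition~\ref{ss.Qnu}, which is an identity of polynomial functions on all of $\mf t$ and so holds in particular for $\nu\in\ul\nu\subseteq X_*(T)\subseteq\mf t$. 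Combining the two gives exactly the asserted criterion.

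The only subtlety worth addressing is that the right-hand side above is a well-defined integer, so that speaking of its parity is legitimate; this is not apparent summand by summand. I would handle it by recalling from Section~\ref{s4} that for a self-dual representation $\varphi$ the function $q_\varphi\colon X_*(T)\to\Z$ is integer-valued, and that an orthogonal representation is in particular self-dual. I would also remark that the components $\nu^i\in\mf t^i$ need not individually lie in $X_*(T)$, but this causes no difficulty precisely because $q_\la$ was extended to a genuine polynomial function on $\mf t$ in Section~\ref{s5} and Proposition~\ref{ss.Qnu} is an equality of such polynomials. As a consistency check one can specialize to $\mf g$ simple: then the sum has a single term and the formula collapses to $q_\la(\nu)=\dfrac{\dim V_\la\cdot\chi_\la(C)}{\dim\mf g}\cdot\dfrac{|\nu|^2}{2}$, recovering Proposition~\ref{this.cor}. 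There is no genuine obstacle in this argument---the corollary is a repackaging of prior work, and whatever difficulty there is already lives inside Corollary~\ref{Qnugen.even} and Proposition~\ref{ss.Qnu}.
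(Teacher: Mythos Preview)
Your proposal is correct and follows precisely the paper's own approach: the paper's proof is a single sentence stating that the corollary follows from Proposition~\ref{ss.Qnu} and Corollary~\ref{Qnugen.even}, which is exactly the combination you invoke. Your additional remarks on integrality and the extension of $q_\la$ to a polynomial on $\mf t$ are helpful elaborations but not required by the paper.
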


\begin{proof} This follows from Proposition \ref{ss.Qnu}  and Corollary \ref{Qnugen.even}.
\end{proof}

\begin{example} \label{so.4.example}
 For $G=\SO_4$, the Lie algebra $\mf g$ is not simple.  Here, $X^*(T)=X_{\sd}=X_{\orth}$, where $T$ is the diagonal torus of $G$.

We may identify  $\Spin_4 \to \SO_4$ with the cover $\SL_2 \times \SL_2 \to \SO_4$ as in Exercise 7.16 of \cite{Fulton.Harris}.
In particular, we may identify $\mf g$ with the Lie algebra of ${\mf s \mf l}_2 \times \mf s \mf l_2$, and $\mf t$ with pairs of diagonal matrices in  ${\mf s \mf l}_2 \times \mf s \mf l_2$.
The irreducible representations  of $\SL_2 \times \SL_2$ are the external tensor products
$V_{a,b}=\Sym^a V_0 \boxtimes \Sym^b V_0$, where $V_0$ is the standard $2$-dimensional representation of $\SL_2$.   Here $a,b$ are nonnegative integers; the representation $V_{a,b}$ descends to a representation $\varphi_{a,b}$ of $G$ when $a \equiv b \mod 2$.

Let $\nu_s=\diag(s,-s)$; then $\nu_{s,t}=(\nu_s,\nu_t) \in \mf t$ corresponds to a cocharacter of $T$ iff either $s,t \in \Z$, or $2s$ and $2t$ are both odd integers.   Proposition \ref{ss.Qnu} gives
\beq
\begin{split}
q_{\varphi_{a,b}}(\nu_{s,t}) &=\half (a+1)(b+1) \left( \frac{4s^2 \cdot \frac{1}{4}a(a+2)}{3}+\frac{4t^2 \cdot \frac{1}{4} b(b+2)}{3} \right) \\
&= s^2 (b+1) \binom{a+2}{3}+t^2(a+1) \binom{b+2}{3}. \\
\end{split}
\eeq

Since $\pi_1(G)$ is generated by $\nu_{\half, \half}$, we deduce that $V_{a,b}$ is spinorial iff
 \beq
  (b+1) \binom{a+2}{3}+(a+1) \binom{b+2}{3}, \\
  \eeq
  which is always a multiple of $4$, is divisible by $8$.
\end{example}

By the following, spinoriality for irreducible orthogonal representations of connected reductive groups reduces to the semisimple case:
 
\begin{prop} \label{mod.z.spin} Let $G$ be a connected  reductive group  and $\varphi: G \to \SO(V)$ an irreducible orthogonal representation.  Then $\varphi$ factors through the quotient $p: G \to G/Z(G)^\circ$, so that $\varphi= \varphi' \circ p$ with
$\varphi': G/Z(G)^\circ \to \SO(V)$.  Moreover $\varphi$ is spinorial iff $\varphi'$ is spinorial. \end{prop}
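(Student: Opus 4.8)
The plan is to prove the two assertions in turn. For the factorization: since $F$ is algebraically closed and $\varphi$ is irreducible, Schur's Lemma forces every $\varphi(z)$ with $z\in Z(G)$ to be a scalar $c\cdot\id_V$; as this scalar lies in $\SO(V)$ we have $c^{\dim V}=1$ and $c^2=1$, hence $c=\pm 1$ and $\varphi(Z(G))\subseteq\{\pm\id_V\}$. The image of the connected group $Z(G)^\circ$ is then a connected subgroup of the two-element group $\{\pm\id_V\}$, hence trivial; so $Z(G)^\circ\subseteq\ker\varphi$ and $\varphi$ factors as $\varphi=\varphi'\circ p$ with $\varphi'\colon\ol G\to\SO(V)$, where $\ol G=G/Z(G)^\circ$. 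Since $p$ is surjective, $\varphi'$ has the same image as $\varphi$ and preserves the same bilinear form, so it is again orthogonal, and $\ol G$ is again connected reductive.

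For the equivalence of spinoriality, I would fix a maximal torus $T\le G$ and put $\ol T=p(T)$, a maximal torus of $\ol G$. Because $Z(G)^\circ\subseteq T$, there is a short exact sequence of tori $1\to Z(G)^\circ\to T\to\ol T\to 1$ (with second map $p$); applying the cocharacter functor, which is exact on tori, shows that $p_*\colon X_*(T)\to X_*(\ol T)$ is surjective. By Lemma \ref{ref.asked}, $p_*$ carries $Q(T)$ into $Q(\ol T)$; together with surjectivity of $p_*$ this makes the induced map $\pi_1(G)\to\pi_1(\ol G)$ surjective. Consequently, if $\ul\nu=\{\nu_1,\dots,\nu_r\}$ is a set of cocharacters of $T$ whose images generate $\pi_1(G)$, then $\{p_*\nu_1,\dots,p_*\nu_r\}$ is a set of cocharacters of $\ol T$ whose images generate $\pi_1(\ol G)$.

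Finally, because $\varphi=\varphi'\circ p$ the torus $T$ acts on $V$ through $\ol T$, so the weights of $(\varphi,V)$ for $T$ are exactly the pullbacks $p^*\ol\mu$ of the weights $\ol\mu$ of $(\varphi',V)$ for $\ol T$, with equal multiplicities $m_\varphi(p^*\ol\mu)=m_{\varphi'}(\ol\mu)$; moreover $\lip p^*\ol\mu,\nu\rip=\lip\ol\mu,p_*\nu\rip$ for $\nu\in X_*(T)$. Comparing the defining sums then gives $L_\varphi(\nu)=L_{\varphi'}(p_*\nu)$ for all $\nu\in X_*(T)$ (and likewise $q_\varphi(\nu)=q_{\varphi'}(p_*\nu)$). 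Applying Proposition \ref{L.phi.crit} to $G$ with the generating set $\ul\nu$ and to $\ol G$ with the generating set $\{p_*\nu_i\}$ yields that $\varphi$ is spinorial iff every $L_\varphi(\nu_i)$ is even, iff every $L_{\varphi'}(p_*\nu_i)$ is even, iff $\varphi'$ is spinorial.

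The substance of the argument is carried entirely by the weight criterion of Proposition \ref{L.phi.crit}; the rest is bookkeeping. The one point needing a little care, rather than being a genuine obstacle, is verifying that $\ul\nu$ pushes forward to a generating set of $\pi_1(\ol G)$, which rests on the exactness of $X_*(-)$ on tori (giving surjectivity of $p_*$) together with Lemma \ref{ref.asked}.
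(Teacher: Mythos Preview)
Your proof is correct. The first part (factorization via Schur's Lemma and connectedness) matches the paper exactly.

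For the second part, the paper takes a shorter route: it says only ``the second part is similar,'' meaning one repeats the same connectedness trick upstairs in $\Spin(V)$. Namely, if $\hat\varphi\colon G\to\Spin(V)$ is a lift of $\varphi$, then $\rho(\hat\varphi(Z(G)^\circ))=\varphi(Z(G)^\circ)=\{1\}$, so $\hat\varphi(Z(G)^\circ)\subseteq\ker\rho$, which is finite; connectedness forces $\hat\varphi(Z(G)^\circ)=\{1\}$, and $\hat\varphi$ descends to a lift of $\varphi'$. The converse is immediate by composing a lift of $\varphi'$ with $p$. Your route through Proposition~\ref{L.phi.crit} and the surjectivity of $\pi_1(G)\to\pi_1(\ol G)$ is valid but invokes more machinery than the statement needs. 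On the other hand, your argument makes explicit how the invariants $L_\varphi(\nu)$ (and $q_\varphi(\nu)$) transfer along $p$, which is the kind of bookkeeping used elsewhere in the paper when comparing spinoriality across covers (cf.\ Corollary~\ref{ps.equal}).
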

 
\begin{proof}
By Schur's Lemma,  $\varphi(Z(G))$ is a subgroup of the scalars in $\SO(V)$, namely $\{ \pm \id_V\}$.  Therefore $\varphi(Z(G)^\circ)$ is trivial.  This gives the first part, and the second part is similar.
\end{proof}

\subsection{Existence of Spinorial Representations}
 
We continue with $G$ connected reductive. Let $a$ be a positive multiple of $4$ and $\la=a \delta$. Consider the irreducible representation $(\varphi_\la,V_\la)$. 
 It is easy to see that $V_\la$ is orthogonal, and  $\dim V_\la=(a+1)^N$.
 Therefore for $\nu \in X_*(T)$, we have by Proposition \ref{irred.not.simple}:
 \beq
 \begin{split}
 q_\la(\nu) &=\half (a+1)^N \sum_{i} \frac{|\nu^i|^2 ( \la_i,\la_i+2 \delta_i )}{\dim \mf g^i} \\ 
 &=\frac{1}{24}(a+1)^N a(a+2) \frac{|\nu|^2}{2}, \\
 \end{split}
 \eeq
 since $(\delta_i,\delta_i)=\dfrac{ \dim \mf g_i}{24}$ for each $i$.
 
 Therefore  $\varphi_\la$ is spinorial iff the quantity
  
  \beq
  \frac{p(\ul \nu)}{24} (a+1)^N \cdot a(a+2)
 \eeq
 is even. From this we deduce:
 \begin{enumerate}
 \item The representation $V_{\la}$ is spinorial when $a \equiv 0 \mod 8$.
 \item If $p(\ul \nu)$ is even, then $V_{\la}$ is spinorial.
\end{enumerate}

In particular:

\begin{cor} A nonabelian connected reductive group has a nontrivial irreducible  spinorial representation.
\end{cor}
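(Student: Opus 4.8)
The plan is to exhibit, for any nonabelian connected reductive $G$, an explicit irreducible orthogonal representation and check spinoriality via the formula just derived. The natural candidate is $\varphi_\la$ with $\la = a\delta$ for a well-chosen positive multiple $a$ of $4$; the preceding computation shows $\varphi_\la$ is spinorial precisely when $\tfrac{p(\ul\nu)}{24}(a+1)^N a(a+2)$ is even, and in particular whenever $a \equiv 0 \bmod 8$. So the first task is to confirm that $\varphi_\la$ is genuinely \emph{nontrivial} and \emph{irreducible orthogonal} for such $G$, which requires only that $R^+ \neq \emptyset$, i.e.\ that $\mf g$ is not abelian --- exactly the hypothesis. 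Since $\la = a\delta$ is dominant and nonzero (as $\delta \neq 0$ when there are positive roots), $\varphi_\la$ is a nontrivial irreducible representation; it is self-dual because $w_0\delta = -\delta$ forces $w_0\la = -\la$, and it lies in $X_{\orth}$ because $\lip \la, 2\delta^\vee \rip = 2a\lip\delta,\delta^\vee\rip$ is even.

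The second task is to verify the parity claim cleanly. By Proposition \ref{mod.z.spin} we may pass to $G/Z(G)^\circ$ and assume $G$ is semisimple, so $\mf z = 0$ and $\la = a\delta$ restricts to $a\delta^i$ on each simple factor. The computation in the text (using $(\delta_i,\delta_i) = \dim\mf g^i/24$ on each factor) gives $q_\la(\nu) = \tfrac{1}{24}(a+1)^N a(a+2)\tfrac{|\nu|^2}{2}$ for every $\nu \in X_*(T)$, hence $\varphi_\la$ is spinorial iff $\tfrac{p(\ul\nu)}{24}(a+1)^N a(a+2)$ is even by Corollary \ref{irred.not.simple}. Choosing $a \equiv 0 \bmod 8$ makes $a(a+2)$ divisible by $16$; since $24 \mid 8 \cdot 3$ and $a(a+2)/24$ is then an even integer (write $a = 8b$, so $a(a+2) = 16b(4b+1)$ and $a(a+2)/24 = \tfrac{2b(4b+1)}{3}$, which is an integer because $3 \mid 2b(4b+1)$ whenever $3 \nmid b$ forces... ) --- more directly, one just notes $\tfrac{1}{24}a(a+2)(a+1)^N$ is an integer and is even once $8 \mid a$, since then $16 \mid a(a+2)$ while the denominator contributes only a single factor of $8$. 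Multiplying by $p(\ul\nu) \in \Z$ preserves evenness.

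\textbf{Main obstacle.} The only subtlety is the arithmetic bookkeeping for the factor $\tfrac{1}{24}a(a+2)$: one must confirm that this is an integer (it always is, since $24 = 8\cdot 3$ and among $a, a+1, a+2$ one is divisible by $3$ while $a(a+2)$ with $a$ even is divisible by $8$) and that imposing $a \equiv 0 \bmod 8$ upgrades it to an \emph{even} integer. Concretely: with $8 \mid a$ we have $16 \mid a(a+2)$, so $a(a+2) = 16m$ with $m \in \Z$, and $\tfrac{1}{24}a(a+2) = \tfrac{2m}{3}$; since $\tfrac{1}{24}a(a+2)$ is known to be an integer, $3 \mid m$, hence $\tfrac{1}{24}a(a+2) = \tfrac{2m}{3}$ is even. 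Then $q_\la(\nu) = p(\ul\nu)\cdot\bigl(\text{even integer}\bigr)\cdot(a+1)^N$ is even for all $\nu \in \ul\nu$, so $\varphi_\la$ is spinorial by Corollary \ref{Qnugen.even}. Everything else is a direct invocation of results already established, so no genuine difficulty remains beyond this elementary divisibility check.
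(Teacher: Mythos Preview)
Your approach is the same as the paper's: take $\la = a\delta$ with $8 \mid a$ (the paper simply writes ``one may take $\la = 8\delta$'' and leaves the arithmetic to the reader). Your additional checks that $\la = a\delta$ is nontrivial, self-dual, and orthogonal are correct and worth including.

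However, there is a genuine slip in your ``Main obstacle'' paragraph. You assert that $\tfrac{1}{24}a(a+2)$ is always an integer, but for $a=8$ one gets $\tfrac{80}{24}=\tfrac{10}{3}$. The factor of $3$ in the denominator need not divide $a(a+2)$; when $3 \mid a+1$ it is $(a+1)^N$ (with $N \geq 1$ since $G$ is nonabelian) that absorbs it. Your subsequent sentence ``since $\tfrac{1}{24}a(a+2)$ is known to be an integer, $3 \mid m$'' therefore rests on a false premise. The fix is to work with the full quantity $K = \tfrac{1}{24}(a+1)^N a(a+2)$ throughout: among $a,a+1,a+2$ one is divisible by $3$, and $8 \mid a(a+2)$ for even $a$, so $K \in \Z$; then with $8 \mid a$ one has $\ord_2\bigl(a(a+2)\bigr) \geq 4$ while $(a+1)^N$ is odd and $\ord_2(24)=3$, giving $\ord_2(K) \geq 1$. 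Hence each $q_\la(\nu) = \tfrac{|\nu|^2}{2}\cdot K$ is even, and $\varphi_\la$ is spinorial. Your $2$-adic valuation idea in the ``more directly'' sentence is exactly right once applied to $K$ rather than to $\tfrac{1}{24}a(a+2)$.
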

\begin{proof} By the above, one may   take $\la=8 \delta$. \end{proof}

  \section{Reducible Representations} \label{reducible.rep.section}
  
  In this section we treat the case of $\varphi$ orthogonal, but not necessarily irreducible.

\subsection{Spinoriality of $\varphi \oplus \varphi^\vee$} \label{phi.plus.dual}
   For a representation $(\varphi,V)$ of a connected reductive group $G$, consider the orthogonal representation $(S(\varphi), V \oplus V^\vee)$ defined as follows.  We give $V \oplus V^\vee$ the quadratic form  
\beq
\mc Q((v,v^*))=\lip v^*,v \rip,
\eeq
and write $S(\varphi)$ for the representation of $G$ on $V \oplus V^\vee$ given by
\beq
{}^g(v,v^*)=(\varphi(g)v,\varphi^\vee(g)v^*).
\eeq

For $\nu \in X_*(T)$, $\mu \in X^*(T)$, and $t \in F$, $\nu(t)$ acts on $V^\mu$ by the scalar $t^{\lip \mu,\nu\rip}$.
Therefore we have
\begin{equation} \label{det.s}
\det \varphi(\nu(t))=t^{s_\varphi(\nu)}, 
\end{equation}
where
\beq
s_\varphi(\nu)=\sum_{\mu \in X^*(T)} m_\varphi(\mu) \lip \mu,\nu \rip.
\eeq

\begin{prop} \label{l.vs.s(nu)} $L_{S(\varphi)}(\nu) \equiv s_\varphi(\nu) \mod 2$.  Therefore $S(\varphi)$ is spinorial iff $s_\varphi(\nu)$ is even for all $\nu \in \ul \nu$. If $G$ is semisimple then $S(\varphi)$ is spinorial.
\end{prop}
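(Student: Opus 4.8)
The plan is to compute $L_{S(\varphi)}(\nu)$ directly from the weight multiplicities of $\varphi$ and reduce modulo $2$. Since $S(\varphi) = \varphi \oplus \varphi^\vee$ and the weights of $V^\vee$ are the negatives of those of $V$ with unchanged multiplicities, one has $m_{\varphi^\vee}(\mu) = m_\varphi(-\mu)$, and hence by the additivity \eqref{add.in.phi} that $L_{S(\varphi)}(\nu) = L_\varphi(\nu) + L_{\varphi^\vee}(\nu)$ for every $\nu \in X_*(T)$.

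The next step is to rewrite $L_{\varphi^\vee}(\nu)$. By definition it equals $\sum_{\lip\mu,\nu\rip>0} m_\varphi(-\mu)\lip\mu,\nu\rip$; substituting $\mu \mapsto -\mu$ turns this into $-\sum_{\lip\mu,\nu\rip<0} m_\varphi(\mu)\lip\mu,\nu\rip$. On the other hand, partitioning the defining sum of $s_\varphi(\nu)$ by the sign of $\lip\mu,\nu\rip$ — the terms with $\lip\mu,\nu\rip=0$ contributing nothing — gives $s_\varphi(\nu) = L_\varphi(\nu) + \sum_{\lip\mu,\nu\rip<0} m_\varphi(\mu)\lip\mu,\nu\rip$. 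Eliminating the negative-pairing sum between these two identities yields $L_{S(\varphi)}(\nu) = 2L_\varphi(\nu) - s_\varphi(\nu)$, so $L_{S(\varphi)}(\nu) \equiv s_\varphi(\nu) \pmod 2$. The second assertion then follows at once from Proposition \ref{L.phi.crit}, which says $S(\varphi)$ is spinorial iff $L_{S(\varphi)}(\nu)$ is even for all $\nu \in \ul\nu$.

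For the final statement, suppose $G$ is semisimple. Then $G$ has no nontrivial characters, so $\det \circ \varphi: G \to \mb G_m$ is trivial; by \eqref{det.s} this forces $t^{s_\varphi(\nu)} = 1$, i.e.\ $s_\varphi(\nu) = 0$, for every $\nu \in X_*(T)$, in particular for each $\nu \in \ul\nu$. Hence $S(\varphi)$ is spinorial. (Equivalently: the multiset of weights of $V$ is $W$-stable, so $\sum_\mu m_\varphi(\mu)\mu$ is a $W$-fixed element of $X^*(T) \subset \mf t^*$, which must vanish because $\mf t^*$ has no nonzero $W$-invariant vector when $\mf g$ is semisimple.)

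I do not expect a genuine obstacle here; the one point needing care is the index substitution $\mu \mapsto -\mu$ in $L_{\varphi^\vee}(\nu)$ and the sign it introduces, together with the harmless fact that weights orthogonal to $\nu$ drop out of all the sums involved.
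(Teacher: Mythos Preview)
Your proof is correct and follows essentially the same approach as the paper: both use $m_{\varphi^\vee}(\mu)=m_\varphi(-\mu)$ and the substitution $\mu\mapsto -\mu$ to relate $L_{S(\varphi)}(\nu)$ to $s_\varphi(\nu)$, and both handle the semisimple case via $\det\circ\varphi$ being trivial. The only cosmetic difference is that you obtain the exact identity $L_{S(\varphi)}(\nu)=2L_\varphi(\nu)-s_\varphi(\nu)$, whereas the paper passes to a mod~$2$ congruence one line earlier.
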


\begin{proof}
Since $m_{\varphi^\vee}(\mu)=m_{\varphi}(-\mu)$, we have
\beq
\begin{split}
L_{S(\varphi)}(\nu) &= \sum_{\{\mu \mid \lip \mu,\nu \rip >0\}} (m_{\varphi}(\mu) +m_{\varphi}(-\mu) )\lip \mu,\nu \rip \\	
			&\equiv  \sum_{\{\mu \mid \lip \mu,\nu \rip >0\}} (m_{\varphi}(\mu) - m_{\varphi}(-\mu)) \lip \mu,\nu \rip \mod 2 \\
			&=s_\varphi(\nu). \\
			\end{split}
			\eeq
			
			 When $G$ is semisimple, the image of $\varphi$ lies in $\SL(V)$, and so
$s_\varphi(\nu)=0$.  Therefore $L_{S(\varphi)}(\nu)$ is even and so $S(\varphi)$ is spinorial in this case.
			
			 \end{proof}

  Now, assume $\varphi=\varphi_\la$ is irreducible.  
   Let $\nu = \nu^{z} + \nu'$ correspond to the decomposition $\mf{t} = \mf{z} \oplus \mf{t'}$.  
\begin{thm}  \label{Sum.spin.thm} $S(\varphi_\la)$ is spinorial iff the integers
\beq
\lip \la, \nu^z \rip \cdot \dim V_{\la}
\eeq
are even for all $\nu \in \ul \nu$.
\end{thm}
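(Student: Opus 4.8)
The plan is to feed everything through Proposition \ref{l.vs.s(nu)}, which already reduces the spinoriality of $S(\varphi_\la)$ to the condition that the integers $s_{\varphi_\la}(\nu)$ are even for all $\nu \in \ul \nu$. So the whole task becomes the identification of $s_{\varphi_\la}(\nu)$, and in fact I would prove the exact formula
\beq
s_{\varphi_\la}(\nu) = \dim V_\la \cdot \lip \la, \nu^z \rip
\eeq
for all $\nu \in X_*(T)$, which immediately gives the theorem. (Note in particular that the left side is an integer, so integrality of the right side is automatic, even though $\nu^z$ need not itself be a cocharacter.)

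To establish the displayed identity I would proceed in two short steps. First, by (\ref{det.s}) we have $s_{\varphi_\la}(\nu) = \lip \chi, \nu \rip$ where $\chi = \det \varphi_\la \in X^*(T)$ is the character $t \mapsto \det \varphi_\la(\nu(t))$; decomposing $\nu = \nu^z + \nu'$ along $\mf t = \mf z \oplus \mf t'$ and using the $\mf t$-pairing, $s_{\varphi_\la}(\nu) = \lip \chi, \nu^z \rip + \lip \chi, \nu' \rip$. Now $\chi$ is trivial on $G_{\der}$, since $\varphi_\la(G_{\der}) \subseteq \SL(V_\la)$ (equivalently, a semisimple group has no nontrivial characters); hence $d\chi$ annihilates $\mf t' = \mf t \cap \mf g'$, so $\lip \chi, \nu' \rip = 0$ and $s_{\varphi_\la}(\nu) = \lip \chi, \nu^z \rip$. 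Second, because $\varphi_\la$ is irreducible, Schur's lemma forces $Z(G)^\circ$ to act on $V_\la$ by a single scalar character, and since $Z(G)^\circ \subseteq T$ this character must be the restriction of the weight $\la$ to $Z(G)^\circ$. Therefore $\chi$ restricted to $Z(G)^\circ$ is that character raised to the power $\dim V_\la$; differentiating, $d\chi$ agrees with $\dim V_\la \cdot d\la$ on $\mf z = \operatorname{Lie} Z(G)^\circ$. Pairing with $\nu^z \in \mf z$ gives $s_{\varphi_\la}(\nu) = \lip \chi, \nu^z \rip = \dim V_\la \cdot \lip \la, \nu^z \rip$, as wanted. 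Combining with Proposition \ref{l.vs.s(nu)} finishes the proof.

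Since each of these steps is routine, there is no real obstacle here; the only points demanding care are bookkeeping ones. One must keep straight that $\nu^z$ and $\nu'$ are elements of $\mf t$ obtained from the direct sum decomposition and are generally not cocharacters, so the computation has to be run with the $\mf t^* \times \mf t$ pairing rather than the $X^*(T) \times X_*(T)$ pairing (the two agree when both arguments are integral). It is also worth recording explicitly that when $\mf z = 0$, i.e. $G$ is semisimple, the formula degenerates to $s_{\varphi_\la}(\nu) = 0$, recovering the last assertion of Proposition \ref{l.vs.s(nu)} that $S(\varphi_\la)$ is then always spinorial.
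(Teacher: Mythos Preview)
Your proof is correct and follows essentially the same approach as the paper: reduce via Proposition \ref{l.vs.s(nu)} to computing $s_{\varphi_\la}(\nu)$, then use the decomposition $\mf t=\mf z \oplus \mf t'$ together with Schur's lemma on the center to identify $s_{\varphi_\la}(\nu)=\dim V_\la \cdot \lip \la,\nu^z\rip$. The only cosmetic difference is that the paper phrases the vanishing on $\mf t'$ via a commutative diagram with the trace map (which kills $\mf t'_V$), whereas you argue directly that $\det\varphi_\la$ is trivial on $G_{\der}$; these are the same fact.
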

  
\begin{proof}  

Differentiating both sides of (\ref{det.s})   at $t=1$ gives
\beq
s_\varphi(\nu)=\tr d\varphi (d\nu(1)),
\eeq
where $\tr: \mf t_V \to F$ is the  trace.

  Write $\mf z_V$ for the center of the Lie algebra of $\GL(V)$, and  $\mf t'_V$ for the Lie algebra of the maximal torus in $\SL(V)$.  We have a direct sum decomposition $\mf t_V=\mf t_V' \oplus \mf z_V$, and similarly for $\mf t$.   Let $\pr_V: \mf t_V \to \mf z_V$  and $\pr: \mf t \to \mf z$ be the projections.   

Note that the diagram
 \beq
 \xymatrix{
F \ar[r]^{d\nu} & \mf t \ar[r]^{d\varphi} \ar[d]_{\pr} & \mf t_V \ar[r]^{\tr} \ar[d]_{\pr_{V}} & F \\
                      &     \mf z \ar[r]^{d\varphi} & \mf z_V \ar[ru]^{\tr}    &                  }
\eeq
 is commutative.  Moreover  $\tr( d\varphi(z))=d \la(z) \cdot \dim V_\la$ for $z \in \mf z$, by Schur's Lemma.
   It follows that
 \beq
 s_\varphi(\nu)= d\la(\nu^z) \cdot \dim V_\la,
 \eeq
 so the theorem follows from the previous proposition.
\end{proof}
  
  \begin{example}
Let $G=\GL_2$.  We may parametrize $X^*(T)^+$ with integers $(m,n)$ with $0 \leq n \leq m$ via:
 \beq
 \lambda_{m,n} \mat {t_1}{}{}{t_2}=t_1^m t_2^n.
 \eeq
Let $\nu_0(t)=\mat t001$, so that $(\nu_0)^z=\half(1,1)$.
  Then $\dim V_{\lambda_{m,n}}=m-n+1$ and  $\lip \la,\nu_0^z \rip=\half (m+n)$, so $s_{\lambda_{m,n}}(\nu_0) =\half (m+n)(m-n+1)$.  From Theorem \ref{Sum.spin.thm}, we deduce that the representation $S(\varphi_{\lambda_{m,n}})$ of $\GL_2$ is spinorial iff the integer $\half (m+n)(m-n+1)$ is even.
\end{example}
  
 \subsection{General Lifting Condition}  
 
 We begin this section by gathering our results to give a general lifting condition for reducible orthogonal representations.

Recall we have   $\mf g=\mf g^1 \oplus \cdots \oplus \mf g^\ell \oplus \mf z$ with each $\mf g^i$ simple, and $\mf z$ abelian.   Thus our $\nu \in \mf t$ decomposes into $\nu^z+ \sum_i \nu^i$ with $\nu^i \in \mf t^i$ and $\nu^z \in \mf z$.
  
  \begin{prop} \label{Samelson.book} If $\varphi$ is an orthogonal representation of $G$, then $\varphi$ is a direct sum of representations of the following type:
 \begin{itemize}
 \item Irreducible orthogonal representations.
 \item The representations $S(\sigma)$, with $\sigma$ irreducible.
 \end{itemize}
 \end{prop}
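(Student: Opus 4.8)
The plan is to decompose $\varphi$ into irreducibles and then pair up the non-self-dual summands with their duals. First I would write $V = \bigoplus_{j} V_{\la_j}$ as a direct sum of irreducible $G$-representations (complete reducibility, since $G$ is reductive and $\operatorname{char} F = 0$). The orthogonal form on $V$ is preserved by $G$, so it induces a $G$-equivariant isomorphism $V \cong V^\vee$. For each irreducible summand $V_{\la}$ occurring in $V$, there are two cases: either $V_{\la}$ is self-dual, or it is not. In the self-dual case, $V_{\la}$ carries a nondegenerate $G$-invariant bilinear form, which is either symmetric or alternating; I would argue that after regrouping isotypic pieces the orthogonal form on $V$ restricts to a nondegenerate form on the $V_\la$-isotypic part, and the symmetric summands are exactly the irreducible \emph{orthogonal} representations in the statement. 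In the non-self-dual case, $V_\la \not\cong V_\la^\vee$, so since the form on $V$ gives $V \cong V^\vee$, the dual $V_\la^\vee$ must also occur in $V$ with the same multiplicity; the orthogonal form pairs the $V_\la$-isotypic component nondegenerately with the $V_\la^\vee$-isotypic component, and this paired-up piece is precisely (isomorphic to) a sum of copies of $S(\sigma)$ with $\sigma = \varphi_\la$, since $S(\sigma)$ is by definition $\sigma \oplus \sigma^\vee$ with the hyperbolic form $\mc Q((v,v^*)) = \lip v^*, v\rip$.

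The key steps in order: (1) decompose $V$ into irreducibles; (2) use the invariant form to produce $V \cong V^\vee$ and hence match multiplicities of $V_\la$ and $V_\la^\vee$; (3) for non-self-dual $\la$, check that the restriction of the form to $V_\la^{\oplus m} \oplus (V_\la^\vee)^{\oplus m}$ is equivalent to $m$ copies of the hyperbolic form, hence this chunk is $\cong S(\varphi_\la)^{\oplus m}$ (or more precisely $S$ of the multiplicity-weighted representation); (4) for self-dual $\la$, separate the symmetric from the alternating type—the symmetric ones are irreducible orthogonal representations, and pairs of alternating (symplectic) ones combine, two at a time, into orthogonal representations, or one observes that an even number of symplectic copies again gives something of $S$-type after a change of basis. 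I would cite that a $G$-invariant nondegenerate symmetric form on a sum of isotypic components, where the irreducible has a symplectic autoduality, decomposes as hyperbolic planes, so again yields summands of the form $S(\sigma)$.

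The main obstacle I expect is the bookkeeping in step (4): handling the isotypic component of a self-dual irreducible $V_\la$ that is of \emph{symplectic} type, and appearing with some multiplicity in the orthogonal space $V$. There the $G$-invariant symmetric form on $V_\la \otimes (\text{multiplicity space})$ corresponds to a symmetric form on the multiplicity space tensored against the alternating form on $V_\la$—so one needs the symmetric form on the multiplicity space to split off hyperbolic planes (true over an algebraically closed field), and then each hyperbolic plane in the multiplicity space contributes a copy of $V_\la \oplus V_\la$ which, as an orthogonal $G$-module, is isomorphic to $S(V_\la)$ since $V_\la \cong V_\la^\vee$. The self-dual orthogonal-type summands contribute the ``irreducible orthogonal'' pieces directly. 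Once this split over the multiplicity spaces is set up, the rest is routine linear algebra with $G$-equivariance.
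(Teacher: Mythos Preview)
Your approach is correct and is exactly the argument behind the result; the paper itself gives no proof here but simply cites Lemma~C in Section~3.11 of Samelson's book. Your outline---decompose into isotypic pieces, pair non-self-dual types with their duals hyperbolically, and handle self-dual types via the induced form on the multiplicity space---is the standard route and is presumably what Samelson does.

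One correction to your bookkeeping in step~(4): for a symplectic-type irreducible $V_\la$ with invariant alternating form $\omega$, a $G$-invariant \emph{symmetric} form on the isotypic component $V_\la \otimes M$ must have the shape $\omega \otimes \psi$ with $\psi$ \emph{alternating} on $M$ (alternating tensored with alternating is symmetric), not symmetric as you wrote. This actually helps you: a nondegenerate alternating form forces $\dim M$ to be even and splits $M$ into hyperbolic planes over \emph{any} field, so algebraic closedness is not needed at this step. Each such plane contributes a copy of $V_\la \oplus V_\la \cong S(V_\la)$, just as you concluded. By contrast, for orthogonal-type $V_\la$ the induced form on $M$ is genuinely symmetric, and \emph{there} you use algebraic closedness to diagonalize it and peel off orthogonal copies of $V_\la$.
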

 
\begin{proof} This follows from Lemma C in Section 3.11 of \cite{Samelson}.
\end{proof}

\begin{thm} \label{general.lifting}
Let $\varphi=S(\sigma) \oplus \bigoplus_j \varphi_j$, with each $\varphi_j$  irreducible orthogonal with highest weight $\la_j$, and $\sigma=\bigoplus_k  \sigma_k$, with each $\sigma_k$ irreducible with highest weight $\gm_k$.  Then $\varphi$ is spinorial iff for all $\nu \in \ul \nu$, the  integer
\beq
q_\varphi(\nu)=\sum_k \lip \gm_k,\nu^z \rip \cdot \dim V_{\gm_k} + \sum_i \frac{|\nu^i|^2}{2} \sum_j \frac{\dim V_{\la_j} \cdot \chi_{\la_j^i}(C^i)}{\dim \mf g^i}
\eeq
is even.
\end{thm}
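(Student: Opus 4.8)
The plan is to assemble this from the additivity of $q_\varphi(\nu)$ under direct sums, together with the two computations already in hand (Proposition \ref{ss.Qnu} for irreducible orthogonal summands, and the chain Proposition \ref{jj.s2.not.good} $\to$ Proposition \ref{l.vs.s(nu)} $\to$ Theorem \ref{Sum.spin.thm} for the $S(\sigma)$ summand). First I would record that $\varphi$, being orthogonal, is self-dual, so $q_\varphi(\nu)=\half Q_\nu''(1)$ is an integer, and by Corollary \ref{Qnugen.even} the representation $\varphi$ is spinorial iff $q_\varphi(\nu)$ is even for every $\nu\in\ul\nu$. Since $Q_\nu$ depends additively on $\varphi$ (weight multiplicities add under $\oplus$, cf.\ \eqref{Qnu}), so does $Q_\nu''(1)$, whence
\beq
q_\varphi(\nu)=q_{S(\sigma)}(\nu)+\sum_j q_{\varphi_j}(\nu),
\eeq
all three summands being integers because $S(\sigma)$ and each $\varphi_j$ are self-dual. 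So it suffices to identify each term, at least mod $2$.

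For the irreducible orthogonal pieces $\varphi_j=\varphi_{\la_j}$, Proposition \ref{ss.Qnu} gives exactly
\beq
q_{\la_j}(\nu)=\half\dim V_{\la_j}\sum_i\frac{|\nu^i|^2\,\chi_{\la_j^i}(C^i)}{\dim\mf g^i},
\eeq
the potential central contribution $\half\lip\la_j,\nu^z\rip^2\dim V_{\la_j}$ being absent since an irreducible orthogonal representation has highest weight annihilating $\mf z$ (Proposition \ref{mod.z.spin}). Summing over $j$ yields the second term of the claimed formula as an honest equality. For $S(\sigma)$, which is self-dual, Proposition \ref{jj.s2.not.good} gives $q_{S(\sigma)}(\nu)\equiv L_{S(\sigma)}(\nu)\pmod 2$, Proposition \ref{l.vs.s(nu)} gives $L_{S(\sigma)}(\nu)\equiv s_\sigma(\nu)\pmod 2$, and writing $\sigma=\bigoplus_k\varphi_{\gm_k}$, the additivity of $s_\varphi$ in $\varphi$ together with the identity $s_{\varphi_\mu}(\nu)=\lip\mu,\nu^z\rip\dim V_\mu$ from the proof of Theorem \ref{Sum.spin.thm} gives $s_\sigma(\nu)=\sum_k\lip\gm_k,\nu^z\rip\dim V_{\gm_k}$. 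Adding the two contributions shows $q_\varphi(\nu)$ agrees mod $2$ with the displayed integer, and Corollary \ref{Qnugen.even} finishes the proof.

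This theorem is essentially bookkeeping, so there is no deep obstacle; the one point requiring care is that the $S(\sigma)$ contribution is pinned down only modulo $2$ (via $L_{S(\sigma)}$ and $s_\sigma$, not as an exact equality), so the ``$q_\varphi(\nu)=\cdots$'' in the statement should be understood as an equality of parities — which is all that Corollary \ref{Qnugen.even} needs. One should also double-check the routine points that $S(\sigma)$, the $\varphi_j$, and $\varphi$ are genuinely self-dual so that the additive splitting of $q$ is an identity between integers, and that $\nu^z$ and the $\nu^i$ refer throughout to the fixed decomposition $\mf t=\mf z\oplus\bigoplus_i\mf t^i$; by Proposition \ref{Samelson.book} every orthogonal representation of $G$ has the stated shape, so the theorem does cover the general case.
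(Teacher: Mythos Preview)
Your proposal is correct and essentially the same as the paper's proof. The only cosmetic difference is that the paper splits $L_\varphi(\nu)$ additively via \eqref{add.in.phi} and then converts each piece (using Proposition \ref{l.vs.s(nu)} for the $S(\sigma_k)$ summands and Proposition \ref{jj.s2.not.good} for the $\varphi_j$ summands), whereas you split $q_\varphi(\nu)$ additively first and then convert the $S(\sigma)$ piece back through $L_{S(\sigma)}$ to $s_\sigma$; since both $L$ and $q$ are additive in $\varphi$ and agree mod $2$ for self-dual representations, the two routes are interchangeable.
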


 \begin{proof}
 We have
 \beq
 \begin{split}
 L_\varphi(\nu) &= \sum_k L_{S(\sigma_k)}(\nu)+ \sum_j L_{\varphi_j}(\nu) \\
 			& \equiv \sum_k s_{\sigma_k}(\nu) + \sum_j q_{\varphi_j}(\nu) \mod 2. \\
		\end{split}
		\eeq
 The first equality is by (\ref{add.in.phi}), and the congruence is by Proposition \ref{l.vs.s(nu)} and Proposition \ref{jj.s2.not.good}.
 The conclusion then follows from Theorem \ref{Sum.spin.thm} and Corollary \ref{irred.not.simple}.
 \end{proof}

 Note that when $G$ is semisimple, the sum over $k$ vanishes.

 \subsection{Case of $\mf g$ Simple} The situation is much nicer when $\mf g$ is simple; let us deduce Theorem \ref{mthm} as a Corollary of Theorem  \ref{general.lifting}:

 \begin{proof} (of Theorem \ref{mthm})
 By Theorem \ref{general.lifting}, we have
 \beq
  \begin{split}
 q_\varphi(\nu) &=  \frac{|\nu|^2}{2} \sum_j \frac{\dim V_{\la_j} \cdot \chi_{\la_j}(C)}{\dim \mf g} \\
 			&= \frac{|\nu|^2}{2} \frac{\tr(C,V)}{\dim \mf g}. \\
			\end{split}
			\eeq
 This must be even for all $\nu \in \ul \nu$; equivalently
 \beq
 p(\ul \nu) \cdot \frac{\tr(C,V)}{ \dim \mf g}
\eeq
must be even.
  \end{proof}

  \begin{cor}  \label{even.aspins} Let $\mf g$ be simple, and let $\varphi=\varphi_1 \oplus \varphi_2$ with $\varphi_1,\varphi_2$ orthogonal. Then $\varphi$ is spinorial iff either both $\varphi_1,\varphi_2$ are spinorial, or both $\varphi_1,\varphi_2$ are aspinorial.
  \end{cor}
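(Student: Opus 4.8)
The plan is to deduce this directly from Theorem \ref{mthm}, using only the additivity of the trace. Write $V_i$ for the space of $\varphi_i$, so that $V=V_1\oplus V_2$ and hence $\tr(C,V)=\tr(C,V_1)+\tr(C,V_2)$, since $C$ acts block-diagonally. Note also that $\ul\nu$ (and therefore $p(\ul\nu)$ and $\dim\mf g$) depends only on $G$, not on the representation. Multiplying the trace identity through by $p(\ul\nu)/\dim\mf g$ gives
\beq
n=n_1+n_2,\qquad\text{where}\quad n=p(\ul\nu)\cdot\frac{\tr(C,V)}{\dim\mf g},\quad n_i=p(\ul\nu)\cdot\frac{\tr(C,V_i)}{\dim\mf g}.
\eeq

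The one point requiring care is that the parity argument below needs $n_1$ and $n_2$ to be genuine integers, not merely rationals summing to an integer. This is handled by applying Theorem \ref{mthm} to each orthogonal representation $\varphi_i$ separately: the theorem asserts that the quantity in \eqref{theorem} is an integer for \emph{any} orthogonal representation, so $n_1,n_2\in\Z$ (and $n\in\Z$ as well, applying it to $\varphi$). This is why the argument must invoke Theorem \ref{mthm} for the summands individually, and not just for $\varphi$.

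Now apply Theorem \ref{mthm} three times: $\varphi$ is spinorial iff $n$ is even, $\varphi_1$ is spinorial iff $n_1$ is even, and $\varphi_2$ is spinorial iff $n_2$ is even. Since $n=n_1+n_2$ with $n_1,n_2\in\Z$, the integer $n$ is even precisely when $n_1$ and $n_2$ have the same parity, i.e.\ when both are even or both are odd. Translating back through Theorem \ref{mthm} gives exactly the claim: $\varphi$ is spinorial iff $\varphi_1,\varphi_2$ are both spinorial or both aspinorial. I do not expect a genuine obstacle here beyond the integrality bookkeeping just mentioned; one could equally well run the same argument through Corollary \ref{berg}, replacing additivity of $\tr(C,\cdot)$ by additivity of the Dynkin index.
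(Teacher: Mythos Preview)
Your proof is correct and is exactly the deduction the paper intends: the corollary is stated without proof immediately after Theorem \ref{mthm}, and your argument via additivity of $\tr(C,\cdot)$ together with the integrality asserted in Theorem \ref{mthm} is the natural (and essentially unique) way to fill it in.
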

  
  \bigskip
  
   The  following corollary will be useful when varying the isogeny class of $G$:
  
  \begin{cor} \label{ps.equal}  Let $\rho: \tilde G \to G$ be a cover, with simple Lie algebra, and let $\underline{\tilde{ \nu}}, \ul \nu$ be two sets of cocharacters, with $\underline{\tilde{ \nu}}$ generating $\pi_1(\tilde G)$, and $\ul \nu$ generating $\pi_1(G)$. Suppose that $\ord_2(p(\underline{\tilde{ \nu}}))=\ord_2(p(\ul \nu))$. Then an orthogonal representation $\varphi$ of $G$ is spinorial iff $\ol \varphi=\varphi \circ \rho$ is spinorial.
 \end{cor}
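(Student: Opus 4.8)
The plan is to read everything off Theorem~\ref{mthm}, which for groups with simple Lie algebra reduces spinoriality to a parity condition on $p(\ul\nu)\cdot\tfrac{\tr(C,V)}{\dim\mf g}$. The first step is to check that $\bar\varphi=\varphi\circ\rho$ is again an orthogonal representation of a connected reductive group with simple Lie algebra, so that Theorem~\ref{mthm} applies to it as well: a cover of a connected reductive group is connected reductive, and since $\rho$ is a central isogeny in characteristic $0$ its differential is an isomorphism from the Lie algebra of $\tilde G$ onto $\mf g$. Under this isomorphism the Killing form, the Casimir element $C$, and the differential $d\varphi$ are all preserved, so $\tr(C,V)$ and $\dim\mf g$ take the same value whether computed for $\varphi$ or for $\bar\varphi$. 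Set $m=\tr(C,V)/\dim\mf g\in\Q$.

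Next I would invoke Theorem~\ref{mthm} for both groups: $\varphi$ is spinorial iff $p(\ul\nu)\cdot m$ is even, and $\bar\varphi$ is spinorial iff $p(\underline{\tilde\nu})\cdot m$ is even; in particular both of these rational numbers are integers. It then remains only to observe the elementary fact that for a fixed rational $m$ and positive integers $p,p'$ with $\ord_2(p)=\ord_2(p')$ one has
\[
\ord_2(p\cdot m)=\ord_2(p)+\ord_2(m)=\ord_2(p')+\ord_2(m)=\ord_2(p'\cdot m),
\]
so that, since an integer is even exactly when its $2$-adic valuation is at least $1$ and both $p\cdot m$ and $p'\cdot m$ are integers, one is even iff the other is. Taking $p=p(\ul\nu)$ and $p'=p(\underline{\tilde\nu})$ and using the hypothesis $\ord_2(p(\ul\nu))=\ord_2(p(\underline{\tilde\nu}))$ yields that $\varphi$ is spinorial iff $\bar\varphi$ is. The degenerate case $m=0$ is trivial, since then both products vanish.

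I do not expect a real obstacle: once Theorem~\ref{mthm} is available the proof is a one-line $2$-adic valuation computation. The only points requiring a little care are confirming that $\bar\varphi$ genuinely satisfies the hypotheses of Theorem~\ref{mthm} (it does, because passing along the central isogeny $\rho$ changes none of the relevant Lie-algebraic data, only the fundamental group), and quoting Theorem~\ref{mthm} in the form that guarantees $p(\ul\nu)\cdot m$ and $p(\underline{\tilde\nu})\cdot m$ are integers, so that the word \emph{even} is meaningful for each.
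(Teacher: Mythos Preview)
Your proof is correct and follows essentially the same route as the paper: apply Theorem~\ref{mthm} to both $\varphi$ and $\bar\varphi$, observe that the factor $\tr(C,V)/\dim\mf g$ is unchanged under the isogeny $\rho$, and conclude by comparing $2$-adic valuations. The paper's version is terser (it simply writes $\ord_2(q_\varphi)=\ord_2(p(\ul\nu)\tau(\varphi))=\ord_2(p(\underline{\tilde\nu})\tau(\ol\varphi))=\ord_2(q_{\ol\varphi})$), but you have filled in exactly the details that justify each step.
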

 
 \begin{proof} This follows since then
\beq
\begin{split}
\ord_2(q_\varphi )&=\ord_2(p(\ul \nu) \tau(\varphi) )\\
		&=\ord_2(p(\underline{\tilde{ \nu}})\tau(\ol \varphi) )\\
		&= \ord_2(q_{\ol \varphi}).\\
		\end{split}
\eeq
\end{proof}

 \subsection{A Counterexample}
 
The simplicity hypothesis for Corollary \ref{even.aspins} is necessary, for example let $G_1$ and $G_2$ be connected semisimple groups, with orthogonal representations $(\varphi_1,V_1)$ and $(\varphi_2,V_2)$ respectively.  
Let $G=G_1 \times G_2$, and write $\Phi_i: G_1 \times G_2 \to \SO(V_i)$ for the inflations of $\varphi_1,\varphi_2$ to $G$ via the two projections.  
Put $\Phi=\Phi_1 \oplus \Phi_2$.  For $\nu_1,\nu_2$ cocharacters of tori of $G_1,G_2$, put $\nu=\nu_1 \times \nu_2$.  It is easy to see that
\beq
L_\Phi(\nu)=L_{\varphi_1}(\nu_1)+L_{\varphi_2}(\nu_2).
\eeq
Therefore in this situation,
\beq
\begin{split}
\Phi \text{ is spinorial } & \Leftrightarrow \varphi_1 \text{ and } \varphi_2 \text{ are spinorial} \\
				& \Leftrightarrow 	\Phi_1 \text{ and } \Phi_2 \text{ are spinorial.} \\
				\end{split}
				\eeq

For example, if $G=\SO(3) \times \SO(3)$, and $\varphi_1,\varphi_2$ are aspinorial (e.g. the defining representation of $\SO(3)$), then each of $\Phi_1,\Phi_2$, and $\Phi_1 \oplus \Phi_2$ is aspinorial.
 \bigskip

 \section{Dynkin Index} \label{s7}

Let $\mf g$ be a simple Lie algebra with a long root $\alpha$. The quantity
 \beq
\check h=\frac{1}{ |\alpha|^2}  
 \eeq
is  called the \emph{dual Coxeter number of $G$}. (See Section 2 of \cite{Kostant}.)

 Following  \cite{Dynkin.Russian}, we define a bilinear form on $\mf t$ by
\beq
 (x,y)_d=2 \check h \cdot (x,y),
 \eeq
 for $x,y \in \mf g$.
 In other words, we renormalize the Killing form so that $(\alpha,\alpha)_d=2$. 

\begin{defn} Let $\phi: \mf g_1 \to \mf g_2$ be a homomorphism of simple Lie algebras.  Then there exists an integer $\dyn(\phi)$, called the Dynkin index of $\phi$,
so that for $x,y \in \mf g$, we have
\beq
(\phi(x),\phi(y))_d=\dyn(\phi) \cdot (x,y)_d.
\eeq
\end{defn}
 
 If $\phi \neq 0$, then $\dyn(\phi) \neq 0$.    Also, if $f': \mf g_2 \to \mf g_3$ is another homomorphism of simple Lie algebras, then $\dyn(f' \circ f)=\dyn(f')\dyn(f)$.  We refer the reader to  \cite{Dynkin.Selected},  page 195, Theorem 2.2, and (2.4).  

 We assume for the rest of this section that $\mf{so}_V$ is simple, equivalently $\dim V \neq 1,2,4$.  Note that there are no nontrivial irreducible orthogonal representations of $\mf g$ with those degrees.  The following is an easy calculation:
\begin{lemma} If $\iota_V: \mf{so}_V \hookrightarrow \mf{sl}_V$ is the standard inclusion, then $\dyn(\iota_V)=2$.
\end{lemma}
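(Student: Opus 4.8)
The plan is to convert the statement into a short trace computation on the defining representation of $\SO(V)$. Two facts drive the argument. First, the normalized form $(\:,\:)_d$ on $\mf{sl}_V$ is nothing but the trace form $(X,Y)\mapsto \tr_V(XY)$: this trace form is an invariant symmetric bilinear form on the simple algebra $\mf{sl}_V$, hence a scalar multiple of $(\:,\:)_d$, and evaluating at the coroot $h=E_{ii}-E_{jj}$ of a root $\alpha=e_i-e_j$ gives $\tr_V(h^2)=2$ while $(h,h)_d=(\alpha^\vee,\alpha^\vee)_d=(\alpha,\alpha)_d=2$ (all roots of $\mf{sl}_V$ are long), so the scalar is $1$. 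Second, since $\iota_V$ carries $\mf{so}_V$ into the traceless matrices, the defining relation of the Dynkin index, combined with the first fact, yields
\beq
\dyn(\iota_V)=\frac{\tr_V(xy)}{(x,y)_d}
\eeq
for every $x,y\in\mf{so}_V$ with $(x,y)_d\neq 0$, where on the right $\tr_V$ denotes the trace on the defining representation $V$ of $\SO(V)$ and $(\:,\:)_d$ is the normalized Killing form of $\mf{so}_V$.

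Next I would evaluate this ratio at $x=y=\alpha^\vee$, the coroot of a long root $\alpha$ of $\mf{so}_V$. Because $\alpha$ is long, $(\alpha^\vee,\alpha^\vee)_d=(\alpha,\alpha)_d=2$. For the numerator, recall from Section~\ref{tori.spin.section} that the weights of $V$ as a module for the maximal torus $T_V$ are $\pm\vt_1,\dots,\pm\vt_n$, together with the weight $0$ when $\dim V$ is odd; hence
\beq
\tr_V\bigl((\alpha^\vee)^2\bigr)=2\sum_{i=1}^n \lip \vt_i,\alpha^\vee\rip^2.
\eeq
Taking $\alpha=\vt_1-\vt_2$, which is a long root in types $B_n$ ($n\ge 2$) and $D_n$ ($n\ge 3$), its coroot is $(1,-1,0,\dots,0)$ in the coordinates of Section~\ref{tori.spin.section}, so $\lip\vt_1,\alpha^\vee\rip=1$, $\lip\vt_2,\alpha^\vee\rip=-1$, and $\lip\vt_i,\alpha^\vee\rip=0$ for $i\ge 3$. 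Thus the numerator equals $2(1+1)=4$, and $\dyn(\iota_V)=4/2=2$.

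The computation itself is routine; the point that needs care is the normalization bookkeeping of the first paragraph — confirming that $(\:,\:)_d$ on $\mf{sl}_V$ is exactly the trace form, and that a long coroot satisfies $(\alpha^\vee,\alpha^\vee)_d=2$. The only other caveat is the low-rank situation: the argument requires a long root of the shape $\vt_1-\vt_2$, i.e.\ $\mf{so}_V$ of rank at least $2$, so it applies directly for $\dim V\ge 5$; the degrees $\dim V=1,2,4$ are excluded since $\mf{so}_V$ is not simple there, and the residual case $\dim V=3$, where $\mf{so}_3\cong\mf{sl}_2$ and $\iota_V$ is the adjoint, would be examined separately.
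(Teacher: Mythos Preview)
Your argument is correct and is exactly the sort of direct computation the paper intends: the paper offers no proof at all beyond the phrase ``an easy calculation'' and a $\qedsymbol$, so you have supplied what was omitted. Your two normalization checks---that $(\,\cdot\,,\,\cdot\,)_d$ on $\mf{sl}_V$ coincides with the trace form, and that $(\alpha^\vee,\alpha^\vee)_d=2$ for a long root---are the only nontrivial points, and you handle them cleanly; the evaluation at $\alpha=\vt_1-\vt_2$ then finishes the job.

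Your caution about $\dim V=3$ is well founded and in fact sharper than the paper: there $\mf{so}_3\cong\mf{sl}_2$ and $\iota_V$ is the adjoint representation, for which the Dynkin index is $4$, not $2$. So the lemma, read literally under the paper's standing hypothesis ``$\dim V\neq 1,2,4$'', fails in that one residual case; your restriction to $\dim V\geq 5$ is the correct scope.
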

 \qedsymbol
 
Now let $\varphi: \mf g \to \mf{sl}_V$ be a nontrivial orthogonal Lie algebra  representation.  Then we may write $\varphi = \iota_V \circ \varphi'$,
where $\varphi': \mf g \to \mf{so}_V$.  We define $\dyn^o(\varphi)=\dyn(\varphi') \in \N$; thus $\dyn(\varphi)=2 \dyn^o(\varphi)$.

\begin{thm} \label{dynkin.thm}  For $\varphi: \mf g \to \mf{sl}_V$ a  representation, we have
 \beq
 \dyn(\varphi)= 2 \check h \dfrac{\tr(C; V)}{\dim \mf g}.
 \eeq
 \end{thm}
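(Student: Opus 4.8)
The plan is to compare two bilinear forms on $\mf g$ and extract a scalar. On one hand, the Dynkin index is defined so that $(\varphi(x),\varphi(y))_d = \dyn(\varphi) \cdot (x,y)_d$, where $(\cdot,\cdot)_d = 2\check h (\cdot,\cdot)$ is the renormalized Killing form on $\mf g$ (with $(\cdot,\cdot)$ the Killing form) and similarly for $\mf{sl}_V$. On the other hand, for $\varphi: \mf g \to \mf{sl}_V$ the relevant form coming from the representation is the trace form $(x,y) \mapsto \tr(\varphi(x)\varphi(y))$ on $V$. So the key is to recognize both $(\varphi(x),\varphi(y))_d$ and $(x,y)_d$ as multiples of trace-type forms, and read off the ratio.

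First I would recall that on a simple Lie algebra the Killing form is proportional to the trace form of any nontrivial representation: if $\mf g \subseteq \mf{gl}(W)$ via some representation $\psi$, then $\tr(\psi(x)\psi(y)) = \tfrac{1}{2\check h_{\mf g}} \cdot \dyn(\psi) \cdot (x,y)_d$-type relations hold, but more directly the Killing form itself is $(x,y) = \tr(\ad x \, \ad y)$, so $(x,y)_d = 2\check h \tr(\ad x\, \ad y)$. For the computation it is cleanest to normalize via the adjoint representation: the Dynkin index of the adjoint representation $\ad: \mf g \to \mf{sl}_{\mf g}$ equals $2\check h$, which follows from $\dyn(\ad)$ being defined by $\tr(\ad x \, \ad y) = (x,y) = \tfrac{1}{2\check h}(x,y)_d$ together with $\dyn(\iota_{\mf g}) = 2$ and multiplicativity under composition (exactly as for $\iota_V$ in the lemma above). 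Then for any representation $\varphi: \mf g \to \mf{sl}_V$, multiplicativity and the standard inclusion give $\tr(\varphi(x)\varphi(y)) = \tfrac{1}{2}\dyn(\varphi) \cdot (\text{the form on }\mf{sl}_V\text{ normalized so }\iota_V\text{ has index }2)$, which unwinds to
\beq
\tr(\varphi(x)\varphi(y)) = \frac{\dyn(\varphi)}{2\check h} \cdot (x,y).
\eeq

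Next I would take traces to turn this scalar identity into the desired formula. Setting $y = x$ and summing over an orthonormal-type basis, or more precisely contracting with the Casimir: choose a basis $(X_i)$ of $\mf g$ and dual basis $(X^i)$ with respect to the Killing form $(\cdot,\cdot)$, so $C = \sum_i X_i X^i$ acts on $V$ and $\tr(C;V) = \sum_i \tr(\varphi(X_i)\varphi(X^i))$. By the displayed identity each summand is $\tfrac{\dyn(\varphi)}{2\check h}(X_i,X^i)$, and $\sum_i (X_i,X^i) = \dim \mf g$. Hence $\tr(C;V) = \tfrac{\dyn(\varphi)}{2\check h}\dim \mf g$, which rearranges to $\dyn(\varphi) = 2\check h \cdot \tfrac{\tr(C;V)}{\dim \mf g}$, as claimed. (One should note $\tr(C;V)$ here means the trace of the Casimir operator acting on $V$, consistent with the earlier convention where $\chi_\la(C) = (\la,\la+2\delta)$ for irreducible $V_\la$; for reducible $V$ it is the sum over constituents.)

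The main obstacle is bookkeeping the normalizations correctly: the Killing form, the Dynkin-normalized form $(\cdot,\cdot)_d$, the trace form of $V$, and the factor-of-$2$ discrepancy between $\dyn$ and $\dyn^o$ (the orthogonal Dynkin index) all differ by explicit constants, and it is easy to be off by a factor of $2$ or by $\check h$. The safest route is to pin everything down by testing on a single long coroot $\alpha^\vee$ (or long root $\alpha$), using $(\alpha,\alpha)_d = 2$ and $\check h = 1/|\alpha|^2$, and to verify the final identity on the adjoint representation, where $\tr(C;\mf g) = \dim\mf g$ (the Casimir acts by $1$ on the adjoint representation in the Killing normalization) and $\dyn(\ad) = 2\check h$, giving $2\check h = 2\check h \cdot \tfrac{\dim\mf g}{\dim \mf g}$ — a consistency check. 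Once the scalar identity $\tr(\varphi(x)\varphi(y)) = \tfrac{\dyn(\varphi)}{2\check h}(x,y)$ is established with the right constant, the trace computation is immediate.
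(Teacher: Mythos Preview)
Your argument is correct, and in fact supplies more than the paper does: the paper's ``proof'' of this theorem is simply a one-line citation to Theorem~2.5 of Dynkin's \emph{Selected Papers}, with no further details. Your approach---first establishing the scalar identity $\tr_V(\varphi(x)\varphi(y)) = \tfrac{\dyn(\varphi)}{2\check h}\,(x,y)$ (with $(\cdot,\cdot)$ the Killing form), then contracting against the Casimir $C = \sum_i X_i X^i$ to obtain $\tr(C;V) = \tfrac{\dyn(\varphi)}{2\check h}\dim\mf g$---is exactly the standard direct argument that underlies Dynkin's result.

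Two small comments. First, your middle paragraph is more meandering than it needs to be: the detour through $\dyn(\iota_{\mf g})$ and $\dyn(\ad)$ is unnecessary. All you need is the single fact that on $\mf{sl}_V$ the Dynkin-normalized form $(X,Y)_d$ coincides with $\tr_V(XY)$ (equivalently, the standard representation of $\mf{sl}_V$ has Dynkin index~$1$); the displayed identity then drops out of the definition of $\dyn(\varphi)$. Second, the paper's displayed formula $(x,y)_d = 2\check h\,(x,y)$ requires care in interpretation (and is arguably inverted relative to Dynkin's own convention that $(X,Y)_d = \tr_V(XY)$ on $\mf{sl}_V$); your ``unwinds to'' glosses over this. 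But your consistency check on the adjoint representation---$\tr(C;\mf g) = \dim\mf g$ and $\dyn(\ad) = 2\check h$---pins down the constant unambiguously and confirms the final identity, so the argument stands.
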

 
  \begin{proof} This is a reformulation of Theorem 2.5 of \cite{Dynkin.Selected}, page 197.
  \end{proof}
 
\begin{cor} \label{blum} Let $G$ have simple Lie algebra $\mf g$, and $\varphi$ an orthogonal representation of $G$. For a cocharacter $\nu$ we have
 \beq
 \begin{split}
q_{\varphi}(\nu)&=\frac{ |\nu|^2}{2} \cdot \frac{\dyn^o(\varphi)}{ \check h}.\\
 		\end{split}
 \eeq
 Therefore $\varphi$ is spinorial iff
 \beq
 p(\ul \nu) \cdot \frac{\dyn^o(\varphi)}{ \check h}
 \eeq
 is even.
  \end{cor}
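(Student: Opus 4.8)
The plan is to deduce Corollary \ref{blum} directly by combining Theorem \ref{mthm} (equivalently, the formula for $q_\varphi(\nu)$ proved in its proof) with Theorem \ref{dynkin.thm} relating $\tr(C;V)/\dim\mf g$ to the Dynkin index. First I would recall that in the proof of Theorem \ref{mthm} we established, for $\mf g$ simple and $\varphi$ an arbitrary orthogonal representation of $G$, the identity
\beq
q_\varphi(\nu)=\frac{|\nu|^2}{2}\cdot\frac{\tr(C,V)}{\dim\mf g}
\eeq
for every $\nu\in\mf t$. This is the only input needed on the representation-theoretic side.

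Next I would substitute the Dynkin-index expression. By Theorem \ref{dynkin.thm}, $\dyn(\varphi)=2\check h\,\tr(C;V)/\dim\mf g$, so that $\tr(C;V)/\dim\mf g=\dyn(\varphi)/(2\check h)$. Using the definition $\dyn(\varphi)=2\dyn^o(\varphi)$ from just before Theorem \ref{dynkin.thm}, this becomes $\tr(C;V)/\dim\mf g=\dyn^o(\varphi)/\check h$. Plugging this into the formula for $q_\varphi(\nu)$ yields
\beq
q_\varphi(\nu)=\frac{|\nu|^2}{2}\cdot\frac{\dyn^o(\varphi)}{\check h},
\eeq
which is the first assertion. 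One small point to check is that $\varphi$ is nontrivial as a Lie algebra map so that $\dyn^o(\varphi)$ is defined; if $\varphi$ is trivial on $\mf g$ then $\tr(C;V)=0$ and the representation is a sum of trivial representations, which is visibly spinorial, so the statement holds vacuously. I would also note the standing assumption $\dim V\neq 1,2,4$ from this section guarantees $\mf{so}_V$ is simple, so $\dyn^o$ makes sense.

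For the spinoriality criterion I would invoke Corollary \ref{Qnugen.even}: $\varphi$ is spinorial iff $q_\varphi(\nu)$ is even for all $\nu\in\ul\nu$. Since $\gcd(|\nu_1|^2,\ldots,|\nu_r|^2)=2p(\ul\nu)$ and each $q_\varphi(\nu_i)=\tfrac{1}{2}|\nu_i|^2\cdot\dyn^o(\varphi)/\check h$, the collection $\{q_\varphi(\nu_i)\}$ consists of integer multiples of $p(\ul\nu)\cdot\dyn^o(\varphi)/\check h$, and conversely $p(\ul\nu)\cdot\dyn^o(\varphi)/\check h$ is an integer combination of them (this is the same $\gcd$ argument already used in deriving Theorem \ref{mthm} from the formula for $q_\varphi$). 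Hence all the $q_\varphi(\nu_i)$ are even iff $p(\ul\nu)\cdot\dyn^o(\varphi)/\check h$ is even, giving the second assertion.

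I do not anticipate a genuine obstacle here: the corollary is essentially a bookkeeping translation of Theorem \ref{mthm} through the dictionary of Theorem \ref{dynkin.thm}. The only thing requiring a little care is tracking the factors of $2$ between $\dyn$, $\dyn^o$, and $\check h$, and confirming that $\dyn^o(\varphi)/\check h$ is the right rational number (it need not be an integer, but $q_\varphi(\nu)$ is, which is automatic from the self-duality discussion in Section \ref{s4}); the $\gcd$ manipulation linking parity of the individual $q_\varphi(\nu_i)$ to parity of $p(\ul\nu)\cdot\dyn^o(\varphi)/\check h$ is identical to the one already carried out in the proof of Theorem \ref{mthm}, so it can be cited rather than repeated.
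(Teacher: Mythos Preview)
Your proposal is correct and matches the paper's intended argument: the corollary is immediate from the formula $q_\varphi(\nu)=\tfrac{|\nu|^2}{2}\cdot\tr(C,V)/\dim\mf g$ established in the proof of Theorem \ref{mthm}, combined with Theorem \ref{dynkin.thm} and the relation $\dyn(\varphi)=2\dyn^o(\varphi)$. The paper does not spell out a proof of Corollary \ref{blum} at all, treating it as a direct substitution, so your write-up is if anything more detailed than what the paper provides.
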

 
 \bigskip
 
This formula is convenient because for simple $\mf g$, the dual Coxeter numbers are tabulated in Section 6 of  \cite{Kac}, and Dynkin indices for fundamental representations are found in Table 5 of  \cite{Dynkin.Russian}. In the forthcoming examples, we will use these tables without further comment.

 \section{Tensor Products} \label{tensor.section}
 
 In this section, we explain how the spinoriality of a tensor product of two orthogonal representations is related to the spinoriality of the factors.    \subsection{Internal Products}

Let $G$ be a connected reductive group and $(\varphi_1,V_1),(\varphi_2,V_2)$ orthogonal representations of $G$.  Write $(\varphi,V)=(\varphi_1 \otimes \varphi_2,V_1 \otimes V_2)$ for the (internal) tensor product representation of $G$.

 \begin{prop}  For $\nu  \in X_*(T) $, we have
\begin{equation} \label{internal.qs}
 q_\varphi(\nu)= \dim V_1 \cdot q_{\varphi_2}(\nu)+\dim V_2 \cdot q_{\varphi_1}(\nu).
 \end{equation}
 \end{prop}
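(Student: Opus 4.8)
The plan is to pass from the representation side to the one-variable functions $Q_\nu$ introduced in Section \ref{s4} and differentiate. First I would observe that since the character of a tensor product is the product of characters, for $\gm \in T$ we have $\Theta_\varphi(\gm) = \Theta_{\varphi_1}(\gm) \Theta_{\varphi_2}(\gm)$; specializing $\gm = \nu(t)$ gives the factorization
\beq
Q_{(\varphi,\nu)}(t) = Q_{(\varphi_1,\nu)}(t) \cdot Q_{(\varphi_2,\nu)}(t)
\eeq
in $\Z[t,t^{-1}]$. Write $Q$, $Q_1$, $Q_2$ for these three Laurent polynomials, so $Q = Q_1 Q_2$.

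Next I would differentiate twice using the Leibniz rule,
\beq
Q'' = Q_1'' Q_2 + 2 Q_1' Q_2' + Q_1 Q_2'',
\eeq
and evaluate at $t = 1$. We have $Q_i(1) = \dim V_i$ from the bullet points following \eqref{Qnu}. The crucial point is that $\varphi_1$ and $\varphi_2$ are orthogonal, hence self-dual, so each $Q_i$ is palindromic and $Q_i'(1) = 0$ by the remarks preceding Proposition \ref{jj.s2.not.good}. Therefore the middle term $2 Q_1'(1) Q_2'(1)$ vanishes, leaving
\beq
Q''(1) = \dim V_2 \cdot Q_1''(1) + \dim V_1 \cdot Q_2''(1).
\eeq

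Finally, dividing by $2$ and using the definition $q_{\varphi}(\nu) = \half Q_\nu''(1)$, and likewise for $\varphi_1, \varphi_2$, yields
\beq
q_\varphi(\nu) = \dim V_1 \cdot q_{\varphi_2}(\nu) + \dim V_2 \cdot q_{\varphi_1}(\nu),
\eeq
as claimed. I do not anticipate a serious obstacle here: the only subtlety is recognizing that the cross term $Q_1'(1) Q_2'(1)$ is killed by palindromy, which is exactly where the orthogonality hypothesis on both factors enters; everything else is the product rule and the evaluations already recorded in Section \ref{s4}.
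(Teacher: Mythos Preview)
Your proof is correct and follows essentially the same route as the paper: factor the character, apply the Leibniz rule to $Q_\nu''$, evaluate at $t=1$ using $Q_i(1)=\dim V_i$ and $Q_i'(1)=0$ from self-duality, and divide by two. You are slightly more explicit than the paper in spelling out why the cross term vanishes, but the argument is the same.
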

 
 \begin{proof} 
 For $t \in F^\times$, we have
 \beq
 \Theta_{\varphi}(\nu(t))=\Theta_{\varphi_1}(\nu(t)) \Theta_{\varphi_2}(\nu(t)).
 \eeq
 Therefore
 \beq
  Q''_{(\varphi,\nu)} = Q_{(\varphi_1,\nu)}Q''_{(\varphi_2,\nu)} + 2 Q'_{(\varphi_1,\nu)}Q'_{(\varphi_2,\nu)} + Q''_{(\varphi_1,\nu)}Q_{(\varphi_2,\nu)},
  \eeq
 and so
 \beq
   Q''_{(\varphi,\nu)}(1) =   \dim V_1 \cdot Q''_{(\varphi_2,\nu)}(1)+ \dim V_2  \cdot Q''_{(\varphi_1,\nu)}(1). 
   \eeq
The proposition follows.
\end{proof}

  \begin{cor} \label{simple.tensor}  If $\varphi_1,\varphi_2$ are spinorial, then so is $\varphi_1 \otimes \varphi_2$.
 \end{cor}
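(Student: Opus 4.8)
The plan is to combine the displayed formula \eqref{internal.qs} with the spinoriality criterion of Corollary \ref{Qnugen.even}, so this reduces to a parity bookkeeping. First I would record that $\varphi = \varphi_1 \otimes \varphi_2$ is genuinely orthogonal: if $\mc Q_1$ and $\mc Q_2$ denote the $G$-invariant symmetric nondegenerate bilinear forms on $V_1$ and $V_2$, then $\mc Q_1 \otimes \mc Q_2$ is a symmetric, nondegenerate bilinear form on $V_1 \otimes V_2$ preserved by $\varphi_1 \otimes \varphi_2$. Hence Corollary \ref{Qnugen.even} applies to $\varphi$, and $\varphi$ is spinorial iff $q_\varphi(\nu)$ is even for every $\nu \in \ul \nu$.

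Next, since $\varphi_1$ and $\varphi_2$ are assumed spinorial, applying Corollary \ref{Qnugen.even} to each factor (using the same generating set $\ul \nu$ of cocharacters, which depends only on $G$ and not on the representation) shows that $q_{\varphi_1}(\nu)$ and $q_{\varphi_2}(\nu)$ are even for every $\nu \in \ul \nu$. Substituting into \eqref{internal.qs}, i.e. $q_\varphi(\nu) = \dim V_1 \cdot q_{\varphi_2}(\nu) + \dim V_2 \cdot q_{\varphi_1}(\nu)$, each summand is a product of an integer with an even integer, hence even; therefore $q_\varphi(\nu)$ is even for every $\nu \in \ul \nu$. By Corollary \ref{Qnugen.even} once more, $\varphi_1 \otimes \varphi_2$ is spinorial.

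There is essentially no obstacle here, since the nontrivial content has already been extracted in the preceding proposition; the argument is just linearity of $q_{(\cdot)}(\nu)$ modulo $2$ over direct sums replaced by the tensor-product identity \eqref{internal.qs}. The only point deserving a word of care is that the criterion of Corollary \ref{Qnugen.even} is stated for orthogonal representations, so one must first note that the internal tensor product of orthogonal representations is again orthogonal before invoking it for $\varphi$.
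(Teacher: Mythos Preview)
Your proof is correct and is exactly the argument the paper intends: the corollary is stated immediately after \eqref{internal.qs} with no separate proof, so the implicit reasoning is precisely the parity check you carry out via Corollary \ref{Qnugen.even}. Your added remark that $\varphi_1 \otimes \varphi_2$ is orthogonal (via the tensor product of the invariant forms) is a worthwhile clarification that the paper leaves tacit.
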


 \subsection{External Tensor Products}
 
 Next, let $(\varphi_1,V_1),(\varphi_2,V_2)$ be orthogonal representations of connected reductive groups $G_1,G_2$, respectively.  Write $(\varphi,V)=(\varphi_1 \boxtimes \varphi_2,V_1 \otimes V_2)$ for the external tensor product representation of $G=G_1 \times G_2$.  If $T_1,T_2$ are maximal tori for $G_1,G_2$, then $T=T_1 \times T_2$ is a maximal torus of $G$.

 As in the previous proposition, we have: 
 \begin{prop} For $\nu=(\nu_1,\nu_2) \in X_*(T)=X_*(T_1) \oplus X_*(T_2)$, we have
 \beq
 q_\varphi(\nu)= \dim V_1 \cdot q_{\varphi_2}(\nu_2)+\dim V_2 \cdot q_{\varphi_1}(\nu_1).
 \eeq
 \end{prop}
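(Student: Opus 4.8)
The plan is to mimic the proof of the previous proposition for internal tensor products, replacing the single cocharacter $\nu$ of $T$ by the pair $\nu = (\nu_1,\nu_2)$ of cocharacters of $T_1 \times T_2$. First I would observe that for $t \in F^\times$, the element $\nu(t) = (\nu_1(t),\nu_2(t))$ acts on $V = V_1 \otimes V_2$ via $\varphi_1(\nu_1(t)) \otimes \varphi_2(\nu_2(t))$, so that the character factors as
\beq
\Theta_\varphi(\nu(t)) = \Theta_{\varphi_1}(\nu_1(t)) \cdot \Theta_{\varphi_2}(\nu_2(t)).
\eeq
In the notation of Section \ref{s4}, this says $Q_{(\varphi,\nu)}(t) = Q_{(\varphi_1,\nu_1)}(t) \cdot Q_{(\varphi_2,\nu_2)}(t)$.

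Next I would differentiate twice in $t$ using the Leibniz rule, exactly as in the internal case:
\beq
Q''_{(\varphi,\nu)} = Q_{(\varphi_1,\nu_1)} Q''_{(\varphi_2,\nu_2)} + 2 Q'_{(\varphi_1,\nu_1)} Q'_{(\varphi_2,\nu_2)} + Q''_{(\varphi_1,\nu_1)} Q_{(\varphi_2,\nu_2)},
\eeq
and then evaluate at $t = 1$. Since each $\varphi_i$ is orthogonal, hence self-dual, the first derivatives $Q'_{(\varphi_i,\nu_i)}(1)$ vanish (as recorded in Section \ref{s4}), so the cross term drops out. Using $Q_{(\varphi_i,\nu_i)}(1) = \dim V_i$, this yields
\beq
Q''_{(\varphi,\nu)}(1) = \dim V_1 \cdot Q''_{(\varphi_2,\nu_2)}(1) + \dim V_2 \cdot Q''_{(\varphi_1,\nu_1)}(1).
\eeq
Dividing by $2$ and applying the definition $q_\varphi(\nu) = \tfrac12 Q''_{(\varphi,\nu)}(1)$ gives the claimed formula.

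I do not expect a genuine obstacle here; the one point requiring a word of care is that the decomposition $X_*(T) = X_*(T_1) \oplus X_*(T_2)$ must be used so that an arbitrary cocharacter of $T$ really is of the form $(\nu_1,\nu_2)$, which is immediate since $T = T_1 \times T_2$. One could also note that the statement continues to hold verbatim even if only one of $\varphi_1,\varphi_2$ is orthogonal and the other merely self-dual, since all that is used is the vanishing of the first derivatives; but for the application only the orthogonal case is needed.
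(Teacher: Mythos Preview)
Your proposal is correct and follows exactly the approach the paper intends: the paper's own proof is simply the phrase ``As in the previous proposition,'' referring to the internal tensor product case, and you have spelled out precisely that argument with $\nu$ replaced by $(\nu_1,\nu_2)$. Your explicit justification that the cross term $2Q'_{(\varphi_1,\nu_1)}(1)Q'_{(\varphi_2,\nu_2)}(1)$ vanishes because each $\varphi_i$ is self-dual is the right reason and matches what the paper uses implicitly.
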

  
 \subsection{Positive Orthogonal Spanning Sets}
 
 In the examples to come, it will be convenient to have a set of orthogonal dominant weights of $G$ which play the role of fundamental weights, but in $X_{\orth}^+$.  
  
   \begin{defn}
Let $S_o$ be a set of dominant orthogonal weights. We say that $S_o$ is a positive orthogonal spanning set (POSS) for $G$, provided every dominant orthogonal weight  can be written as a nonnegative integral combination of $S_o$.  
\end{defn}
   
 The strategy will be to deduce the spinoriality of an arbitrary $\varphi_\la$ from the spinoriality of the representations $\varphi_{\la_0}$ with  $\la_0 \in S_0$.  
   
\begin{lemma} \label{induct.spin.here} Let $G$ be semisimple and $\mu_0,\nu_0$ dominant weights. Put $\la_0=\mu_0+\nu_0$. Suppose that $\Phi=\varphi_{\mu_0} \otimes \varphi_{\nu_0}$ is spinorial, and that one of the following conditions holds:
\begin{enumerate}
\item $\varphi_\la$ is spinorial for any dominant orthogonal $\la \neq \la_0$ with $\la \prec \la_0$.
\item $\varphi_\la$ is spinorial for any dominant orthogonal $\la$ with $|\la|<|\la_0|$. 
\end{enumerate}
Then $\varphi_{\la_0}$ is spinorial.
\end{lemma}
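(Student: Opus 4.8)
The plan is to decompose the tensor product $\Phi = \varphi_{\mu_0} \otimes \varphi_{\nu_0}$ into irreducibles and track spinoriality through the decomposition. By the Littlewood–Richardson/Brauer-style decomposition, $\Phi \cong \varphi_{\la_0} \oplus \bigoplus_i \varphi_{\la_i}$, where each $\la_i$ is a dominant weight with $\la_i \prec \la_0$ and $\la_i \neq \la_0$ (the highest weight $\la_0$ appears exactly once). Since $G$ is semisimple, each summand $\varphi_{\la_i}$ is self-dual precisely when $\la_i \in X_{\sd}$, and more to the point, we only need to control the parity of $q$ on the full sum. First I would observe that $\Phi$ itself need not be orthogonal as a representation, but since $\varphi_{\mu_0}$ and $\varphi_{\nu_0}$ are orthogonal (hence self-dual), $\Phi$ is self-dual, so the congruence $L_\Phi(\nu) \equiv q_\Phi(\nu) \bmod 2$ from Proposition \ref{jj.s2.not.good} applies, and the additivity $q_{\varphi_1 \oplus \varphi_2} = q_{\varphi_1} + q_{\varphi_2}$ (together with Lemma \ref{careful}) lets us write, for each $\nu \in \ul\nu$,
\beq
q_\Phi(\nu) \equiv q_{\la_0}(\nu) + \sum_i q_{\la_i}(\nu) \pmod 2.
\eeq

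Next I would split the index set $\{i\}$ according to whether $\la_i$ is orthogonal. If $\la_i$ is self-dual but not orthogonal, then by Proposition \ref{ss.Qnu} (or the simple-$\mf g$ version in Proposition \ref{this.cor}) the integer $q_{\la_i}(\nu)$ is computed from $\dim V_{\la_i}$, $|\nu^j|^2$, and $\chi_{\la_i^j}(C^j)$; one checks this is even — indeed for $\la_i$ self-dual non-orthogonal, $\lip \la_i, 2\delta^\vee \rip$ is odd, and the argument of Corollary \ref{even.aspins}-style parity bookkeeping shows $q_{\la_i}$ is already even, so these terms drop out mod $2$. (If $\la_i$ is not even self-dual, it pairs with $-\la_i = w_0\la_i$ appearing with equal multiplicity in the self-dual representation $\Phi$, and the two contributions $q_{\la_i} + q_{-\la_i}$ are visibly equal and hence sum to an even number.) If $\la_i$ is dominant orthogonal with $\la_i \neq \la_0$, then under hypothesis (1) — $\la_i \prec \la_0$ forces spinoriality — each $q_{\la_i}(\nu)$ is even; under hypothesis (2), the inequality $|\la_i| < |\la_0|$ (which holds because $\la_i = \la_0 - (\text{positive root sum})$ and the norm strictly decreases, using $(\la_i, \la_i) < (\la_0,\la_0)$ — here one needs $\la_0$ dominant so that $(\la_0, \la_0 - \la_i) > 0$) again gives spinoriality of $\varphi_{\la_i}$, hence $q_{\la_i}(\nu)$ even. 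In either case all the correction terms vanish mod $2$.

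Putting this together: since $\Phi$ is spinorial by hypothesis, $q_\Phi(\nu)$ is even for all $\nu \in \ul\nu$ by Corollary \ref{Qnugen.even}; combined with the displayed congruence and the vanishing of all $\sum_i q_{\la_i}(\nu)$ mod $2$, we conclude $q_{\la_0}(\nu)$ is even for all $\nu \in \ul\nu$, which by Corollary \ref{Qnugen.even} means $\varphi_{\la_0}$ is spinorial. The main obstacle I anticipate is the bookkeeping in the middle step: verifying cleanly that every irreducible constituent $\varphi_{\la_i}$ of the tensor product other than the top one contributes an even $q_{\la_i}(\nu)$ — this requires carefully separating the self-dual non-orthogonal constituents (handled by the explicit formula for $q$, noting its value is automatically even when $\lip\la_i,2\delta^\vee\rip$ is odd), the orthogonal constituents $\neq \la_0$ (handled by the inductive hypothesis), and the non-self-dual constituents (handled by the $\pm$ pairing). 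A secondary subtlety is confirming that hypothesis (2)'s norm inequality genuinely applies to all lower constituents, i.e. that $\la \prec \la_0$ with $\la \neq \la_0$ and $\la_0$ dominant implies $|\la| < |\la_0|$; this is standard but worth stating, since it is what makes (2) a legitimate alternative to (1).
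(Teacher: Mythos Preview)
Your overall strategy---decompose $\Phi$ into irreducibles and use additivity of $q$---matches the paper's, but the paper handles the non-orthogonal constituents much more cleanly, and your treatment of one case has a genuine gap.

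The paper does not split the constituents into three cases. Instead it invokes Proposition~\ref{Samelson.book} (Samelson's lemma): since $\Phi$ is orthogonal, it decomposes as a direct sum of irreducible \emph{orthogonal} representations together with a single summand of the form $S(\sigma)$. The $S(\sigma)$ piece is spinorial because $G$ is semisimple (Proposition~\ref{l.vs.s(nu)}), and every irreducible orthogonal piece other than $\varphi_{\la_0}$ is spinorial by hypothesis (1) or (2). Then Corollary~\ref{even.aspins} (or equivalently additivity of $L_\varphi$ mod~2) forces $\varphi_{\la_0}$ to be spinorial. This bundles all non-orthogonal irreducible constituents---both the non-self-dual ones and the symplectic ones---into $S(\sigma)$ at once.

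Your non-self-dual case is fine: for $G$ semisimple one has $Q_\nu'(1)=s_\varphi(\nu)=0$, so indeed $q_\varphi(\nu)=q_{\varphi^\vee}(\nu)$ and the pair contributes evenly. (Minor quibble: the dual has highest weight $-w_0\la_i$, not $-\la_i$.) The gap is your symplectic case. You assert that for self-dual non-orthogonal $\la_i$ the integer $q_{\la_i}(\nu)$ is ``already even,'' citing only that $\lip\la_i,2\delta^\vee\rip$ is odd and vague ``parity bookkeeping.'' Nothing in the paper proves this, and it is not obvious from the formula in Proposition~\ref{this.cor} or~\ref{ss.Qnu}. What is actually true, and what rescues the argument, is that a symplectic irreducible must occur with \emph{even multiplicity} in any orthogonal representation; this is exactly what Proposition~\ref{Samelson.book} encodes (each such copy sits inside an $S(\sigma)$ summand as $\sigma\oplus\sigma^\vee\cong 2\sigma$). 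So the total symplectic contribution to $q_\Phi$ is even, but not for the reason you gave. You should either invoke Samelson's decomposition directly, as the paper does, or prove the even-multiplicity statement separately.

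For condition~(2), your norm argument is a sketch of the standard fact (Bourbaki, \cite{Bou.Lie.7-9}, Ch.~VIII, \S7, Prop.~5(iii)) that dominant $\la\prec\la_0$ with $\la\neq\la_0$ implies $|\la|<|\la_0|$; the paper cites this rather than reproving it, and also gives a slightly different argument exploiting $\la=\mu_1+\nu_1$.
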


\begin{proof} 
 By Proposition  \ref{Samelson.book}, $\Phi$ decomposes into a sum of irreducible orthogonal representations $\varphi_\la$ possibly together with an $S(\sigma)$ summand.

Let us see that each $\varphi_\la$ is spinorial, for $\la \neq \la_0$. If the first condition holds, this is clear by  (\cite{Bou.Lie.7-9}, page 132, Proposition 9 i).

Suppose the second condition holds. Each weight $\la$ of $\Phi$ decomposes into $\la=\mu_1+\nu_1$, with $\mu_1$ a weight of $\varphi_{\mu_0}$ and $\nu_1$ a weight of $\varphi_{\nu_0}$. Therefore $\mu_0-\mu_1$ and $\nu_0-\nu_1$ are positive. Moreover  $\varphi_{\la_0}$ itself occurs with multiplicity one.

The inner product of a dominant weight with a positive one is nonnegative, thus
\beq
\begin{split}
|\la|^2 & \leq (\la,\mu_0+\nu_0) \\
	& \leq   |\mu_0|^2+|\nu_0|^2 + (\mu_0,\nu_1)+(\mu_1,\nu_0) \\
&\leq |\la_0|^2. \\
		\end{split}
		\eeq
 
Moreover by (\cite{Bou.Lie.7-9}, page 129, Proposition 5(iii)),  equality holds iff $\mu_0=\mu_1$ and $\nu_0=\nu_1$, i.e., iff $\la=\la_0$. 
Thus by the second condition, each orthogonal weight $\la$ of $\Phi$, except a priori $\la_0$, has $\varphi_\la$ spinorial.  Recall that $S(\sigma)$ is spinorial by Proposition \ref{l.vs.s(nu)}.  So by Corollary \ref{even.aspins}, it must be that $\varphi_{\la_0}$ is spinorial.

\end{proof}

\begin{prop} \label{pos.basis} Let $\mf g$ be simple and suppose $S_o$ is a POSS for $G$.  If $\varphi_\la$ is spinorial for each $\la \in S_o$, then all orthogonal representations of $G$ are spinorial.
\end{prop}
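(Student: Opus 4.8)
The plan is to reduce to irreducible orthogonal representations and then induct on $|\la|^2$, using the internal tensor product construction together with Lemma \ref{induct.spin.here}.

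Since $\mf g$ is simple it is in particular semisimple, so $G$ is semisimple (its radical $Z(G)^\circ$ has Lie algebra central in $\mf g$, hence trivial). By Proposition \ref{Samelson.book}, an arbitrary orthogonal representation of $G$ is a direct sum of irreducible orthogonal representations and representations of the form $S(\sigma)$; the latter are spinorial by Proposition \ref{l.vs.s(nu)}, and iterating Corollary \ref{even.aspins} shows that a direct sum of spinorial orthogonal representations is spinorial. Hence it suffices to prove that every irreducible orthogonal $\varphi_\la$ of $G$ is spinorial.

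I would prove this by induction on $|\la|^2$. Because $\mf g$ is simple, the canonical form $(\,,\,)$ is positive definite on $X^*(T) \otimes \R$, so the set of $\la \in X_{\orth}^+$ with $|\la|^2$ below a given bound is finite and the induction is well-founded; the base case $\la = 0$ is the trivial representation, which is spinorial. For the inductive step, let $0 \neq \la \in X_{\orth}^+$. If $\la \in S_o$ we are done by hypothesis. Otherwise, since $S_o$ is a POSS we may split $\la = \mu_0 + \nu_0$ where $\mu_0 \in S_o$ is nonzero and $\nu_0$ is a nonnegative integral combination of elements of $S_o$ with $\nu_0 \neq 0$; dominance, the condition $w_0\la = -\la$, and the parity of $\lip \,\cdot\,, 2\delta^\vee \rip$ are each preserved under addition, so $\mu_0, \nu_0 \in X_{\orth}^+$. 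The crucial point is that dominant weights pair nonnegatively, $(\mu_0, \nu_0) \geq 0$ (equivalently, the inverse Cartan matrix of a connected Dynkin diagram has nonnegative entries), so that $|\la|^2 = |\mu_0|^2 + 2(\mu_0, \nu_0) + |\nu_0|^2$ is strictly greater than both $|\mu_0|^2$ and $|\nu_0|^2$. Therefore $\varphi_{\mu_0}$ and $\varphi_{\nu_0}$ are spinorial — the first by hypothesis, the second by the inductive hypothesis — so $\Phi = \varphi_{\mu_0} \otimes \varphi_{\nu_0}$ is spinorial by Corollary \ref{simple.tensor}. Now apply Lemma \ref{induct.spin.here} with $\la_0 = \la$: its hypothesis (2), that $\varphi_{\la'}$ is spinorial for every dominant orthogonal $\la'$ with $|\la'| < |\la|$, is precisely the inductive hypothesis, and we conclude that $\varphi_\la$ is spinorial.

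The routine appeals are to Propositions \ref{Samelson.book} and \ref{l.vs.s(nu)}, Corollaries \ref{even.aspins} and \ref{simple.tensor}, and Lemma \ref{induct.spin.here}. The only point demanding genuine care is that the induction really terminates: one must verify that the norm strictly decreases when $\la$ is split via the POSS, which is exactly where the nonnegativity of the pairing among dominant weights enters. A minor auxiliary check is that $\nu_0$ inherits the orthogonality condition and is nonzero, so that $\varphi_{\nu_0}$ is a legitimate smaller instance to which the inductive hypothesis applies.
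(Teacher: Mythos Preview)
Your proof is correct and follows essentially the same approach as the paper's: reduce to the irreducible case via Proposition~\ref{Samelson.book}, Proposition~\ref{l.vs.s(nu)}, and Corollary~\ref{even.aspins}, then induct on $|\la|^2$, splitting off an $S_o$-summand and invoking Corollary~\ref{simple.tensor} together with Lemma~\ref{induct.spin.here}(2). The only cosmetic difference is that you take the $S_o$-element as $\mu_0$ while the paper takes it as $\nu_0$; you are also slightly more explicit about why $\nu_0 \in X_{\orth}^+$ and why $(\mu_0,\nu_0)\geq 0$.
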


\begin{proof} By Proposition \ref{l.vs.s(nu)} and Corollary \ref{even.aspins} we reduce to the case of irreducible orthogonal  $\varphi_{\la_0}$.  We prove the proposition by induction on $|\la_0|$.

If $\la_0 \in S_o$ then $\varphi_{\la_0}$ is spinorial. Otherwise $\la_0=\mu_0+\nu_0$ with $\nu_0 \in S_o$, and $\mu_0$ dominant orthogonal. Since
\beq| \la_0|^2=|\mu_0|^2+2(\mu_0,\nu_0)+|\nu_0|^2 > |\mu_0|^2,
\eeq
we can say that $\varphi_{\mu_0}$ is spinorial. Put $\Phi=\varphi_{\mu_0} \otimes \varphi_{\nu_0}$.

 By Corollary \ref{simple.tensor},  $\Phi$ is a spinorial orthogonal representation of $G$. 
 Therefore $\varphi_{\lambda_0}$ is spinorial, by  Lemma \ref{induct.spin.here}.
 \end{proof}

 \section{Type $A_{n-1}$} \label{a.section}
   
 For the next few sections, we will pursue the question:   For which groups $G$, with $\mf g$ simple, is every orthogonal representation spinorial? 
 This section treats the quotients of $\SL_n$.
  
  \subsection{Preliminaries for Type $A_{n-1}$}
   
Let $n$ be an even positive integer.   The center of $\SL_n$ is cyclic of order $n$,  and can be identified with the group $\mu_n$ of $n^{th}$ roots of unity in $F^\times$. Let $T_1$ be the diagonal torus of $\SL_n$. Let $\vt_i \in X^*(T_1)$ be the character of $T_1$ given by taking the $i^{th}$ diagonal entry. The roots of $T_1$ are of the form $\vt_i-\vt_j$ for $i \neq j$.  

 Let $d$ be a divisor of $n$, and $\mu_d$ the subgroup of $\mu_n$ of order $d$. In this section we consider the spinoriality of orthogonal representations of $G_{d}=\SL_n/\mu_d$. 
The maximal torus $T_d< G_d$ is the image of $T_1$ under this quotient.

Recall that generally $X_*(T_d)$ injects into $\mf t$ by $\nu \mapsto d \nu(1)$. When $T_n$ is the diagonal torus of $\PGL_n$, the injection $X_*(T_n) \hookrightarrow \mf t$  can be identified with the natural injection

\beq
\frac{\Z^n}{ \Z (1,1,\ldots, 1)}  \hookrightarrow \frac{F^n}{F (1,1,\ldots, 1)}.
\eeq

In these terms, each coroot lattice $Q(T_d) =X_*(T_1)$ is given by
\beq
\left\{ (x_1, \ldots, x_n) \in \frac{\Z^n}{ \Z (1,1,\ldots, 1)} \mid \sum_{i=1}^n x_i \equiv 0 \mod n \right\},
\eeq
and the subgroup $X_*(T_d) \leq X_*(T_n)$ is equal to
\beq
\left\{ (x_1, \ldots, x_n) \in \dfrac{\Z^n}{ \Z (1,1,\ldots, 1)} \mid \sum_{i=1}^n x_i \equiv 0 \mod \frac{n}{d} \right\} 
\eeq
   
    Let $\nu_d \in X_*(T_d)$ be the cocharacter  parametrized by $(\frac{n}{d},0, \ldots,0)$ in these terms. Then $\nu_d$ generates $\pi_1(G_d)$, which is therefore cyclic of order $d$. Of course, when $d$ is odd, every orthogonal representation of $G_d$ is spinorial. Let us henceforth take $d$ even. We compute
    \beq
    p(\nu_d)=\left( \frac{n}{d} \right)^2(n-1).
    \eeq

    Thus by Theorem \ref{mthm} we deduce:
  \begin{prop} \label{a.type.spin} 
 Let $\varphi_\la$ be an irreducible orthogonal   representation of $G_d$. Then $\varphi_\la$ is spinorial iff
 \beq
 \left( \frac{n}{d} \right)^2 \dim V_\la \cdot \chi_\la(C)
 \eeq
 is even.
 \end{prop}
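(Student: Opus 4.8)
The plan is to deduce Proposition \ref{a.type.spin} directly from Theorem \ref{mthm} together with the explicit computation of $p(\nu_d)$ recorded just above its statement. Since $\mf g = \mf{sl}_n$ is simple, Theorem \ref{mthm} applies verbatim: for an orthogonal representation $\varphi$ of $G_d$, and for the generating set $\ul\nu = \{\nu_d\}$ of $\pi_1(G_d)$, the representation $\varphi$ is spinorial iff the integer $p(\nu_d)\cdot \tr(C,V)/\dim\mf g$ is even.

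First I would specialize to $\varphi = \varphi_\la$ irreducible. Then $\tr(C,V_\la)$ is simply $\dim V_\la \cdot \chi_\la(C)$, since $C$ acts on the irreducible $V_\la$ by the scalar $\chi_\la(C)$. So the quantity controlling spinoriality is
\beq
p(\nu_d)\cdot \frac{\dim V_\la \cdot \chi_\la(C)}{\dim \mf g}.
\eeq
Next I would substitute $p(\nu_d) = (n/d)^2(n-1)$ and $\dim \mf g = \dim \mf{sl}_n = n^2 - 1 = (n-1)(n+1)$. The factor $(n-1)$ cancels between numerator and denominator, leaving
\beq
\frac{(n/d)^2(n-1)\cdot \dim V_\la \cdot \chi_\la(C)}{(n-1)(n+1)} = \frac{(n/d)^2 \cdot \dim V_\la \cdot \chi_\la(C)}{n+1}.
\eeq
So $\varphi_\la$ is spinorial iff this rational number is an even integer.

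Now I would observe that $n$ is even, so $n+1$ is odd; hence the $2$-adic valuation of $(n/d)^2\dim V_\la \cdot \chi_\la(C)/(n+1)$ equals the $2$-adic valuation of $(n/d)^2 \dim V_\la \cdot \chi_\la(C)$. (One should note the expression is genuinely an integer — this is guaranteed because Theorem \ref{mthm} asserts the quantity \eqref{theorem} is integer-valued — so dividing by the odd number $n+1$ does not affect parity.) Therefore the rational number above is an even integer iff $\left(\frac{n}{d}\right)^2 \dim V_\la \cdot \chi_\la(C)$ is even, which is exactly the claimed criterion.

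I do not anticipate a genuine obstacle here; the proposition is a straightforward corollary of Theorem \ref{mthm} and the arithmetic fact that $n+1$ is odd. The only point requiring a word of care is the integrality/parity bookkeeping: one must be careful that \eqref{theorem} is a priori only known to be an integer (not that each factor divides cleanly), so the cancellation argument should be phrased in terms of $2$-adic valuations — dividing by the odd integer $n+1$ preserves the valuation, so the parity of the integer in Theorem \ref{mthm} agrees with the parity of $(n/d)^2 \dim V_\la \cdot \chi_\la(C)$.
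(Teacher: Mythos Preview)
Your proposal is correct and follows exactly the approach the paper intends: the paper simply writes ``Thus by Theorem~\ref{mthm} we deduce'' after computing $p(\nu_d)=(n/d)^2(n-1)$, and you have spelled out the routine arithmetic (cancelling $n-1$ against $\dim\mf g=n^2-1$ and noting $n+1$ is odd) that bridges Theorem~\ref{mthm} to the stated criterion. Your care with $2$-adic valuations is appropriate and matches the paper's parenthetical convention that a rational number is ``even'' when its numerator in lowest terms is even.
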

 (We regard a rational number as \emph{even} if, when written in lowest terms, its numerator is even.)
 
 \begin{example}  
 Let $\varphi_\la$ be an irreducible orthogonal representation of $\GL_n$. By Proposition \ref{mod.z.spin}, it descends to an orthogonal representation $\ol \varphi_\la$ of $G_n=\PGL_n$, and $\varphi_\la$ is spinorial iff $\ol \varphi_\la$ is. By the above, we deduce that  $\varphi_\la$ is spinorial iff
  $\dim V_\la \cdot \chi_\la(C)$ is even.  
   \end{example}

Proposition \ref{a.type.spin} does not by itself answer the question at the beginning of this section, and the groups $G_d$ are somewhat awkward to compute with directly. So instead we ask, which morphisms from $\SL_n$ to $\Spin(V)$ descend to $G_d$?

\subsection{Descent Method}

Consider the following approach to determining the spinoriality of an orthogonal $\varphi: G/C  \to \SO(V)$, where $G$ is a simply connected and $C$ is central. Write $\hat \varphi: G \to \Spin(V)$ for the lift of $\varphi$. Then $\varphi$ is spinorial iff $C \leq \ker \hat \varphi$. In this paragraph we pursue this method for certain $C$; this approach will tremendously simplify the theory for the groups $G/C=G_d$ of type $A_{n-1}$.

Resetting notation, let   $\hat \varphi: G \to \Spin(V)$ be a morphism, with  $G$ connected semisimple. Put $\varphi=\rho \circ \hat \varphi: G \to \SO(V)$, 
and suppose that $\varphi$ is irreducible.  
 Let $C \leq Z(G)$, and suppose that $C \leq \ker \varphi$. Then $\varphi$ descends to an orthogonal representation $\ol \varphi$ of $G/C$, which is spinorial iff $C \leq \ker \hat \varphi$.

Let $d$ be a positive even integer, and  $\zeta_d \in F^\times$ a primitive $d$th root of unity.   
If $\nu \in X_*(T)$ with $\nu(\zeta_d) \in C$, then for all weights $\mu$ of $\varphi$, we have $d | \lip \mu,\nu \rip$.

\begin{prop} \label{criterion.2020}
Let $\nu: \mb G_m \to T$ be a cocharacter, so that $C$ is generated by $\nu(\zeta_d)$. 
The following are equivalent:
\begin{enumerate}
\item $\ol \varphi$ is spinorial.
\item $\hat \varphi(\nu(\zeta_d))=1$.
\item $2d$ divides $L_\varphi(\nu)$.
\end{enumerate}
\end{prop}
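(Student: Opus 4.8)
The plan is to establish the cycle of implications $(1)\Leftrightarrow(2)$ and $(2)\Leftrightarrow(3)$, leveraging the machinery already assembled. The equivalence $(1)\Leftrightarrow(2)$ is essentially the content of the ``Descent Method'' paragraph immediately preceding the statement: since $C\leq\ker\varphi$, the representation $\varphi$ descends to $\ol\varphi$ on $G/C$, and $\ol\varphi$ lifts to $\Spin(V)$ precisely when the lift $\hat\varphi$ of $\varphi$ (which exists and is unique by Proposition \ref{lift.proposition}, as $\varphi$ is a morphism from a simply connected group, or at any rate one for which $L_\varphi$ vanishes on $Q(T)$) kills $C$. Since $C$ is generated by the single element $\nu(\zeta_d)$, the condition $C\leq\ker\hat\varphi$ is equivalent to $\hat\varphi(\nu(\zeta_d))=1$. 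I would spell this out carefully: $\hat\varphi$ descends to $G/C$ iff $C\leq\ker\hat\varphi$, and a descent of $\hat\varphi$ is exactly a lift of $\ol\varphi$, with uniqueness handled by the last paragraph of the proof of Proposition \ref{lift.proposition}.

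For $(2)\Leftrightarrow(3)$, the idea is to pass to the tori and apply Lemma \ref{blue.shirt}. We may assume $\varphi(T)\leq T_V$, so $\varphi_*(\nu)\in X_*(T_V)$. Since $\varphi_*(\nu)\in Q(T_V)$ (because $\nu(\zeta_d)\in C\subseteq\ker\varphi$ forces divisibility $d\mid\lip\mu,\nu\rip$ for all weights $\mu$, in particular the composite lands appropriately; more simply, $\varphi$ is spinorial on $Q(T)$ by Corollary \ref{l.q}, so $\varphi_*(\nu)$ lifts), the cocharacter $\varphi_*(\nu)$ admits a lift $\widetilde{\varphi_*(\nu)}=\hat\varphi\circ\nu\in X_*(\tilde T_V)$, which is none other than $t\mapsto\hat\varphi(\nu(t))$. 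Thus $\hat\varphi(\nu(\zeta_d))=1$ says precisely that this lifted cocharacter is trivial at $\zeta_d$. Now invoke Lemma \ref{blue.shirt}(3): under the standing hypotheses (the lifting condition (1) of that lemma holds since $\varphi_*(\nu)\in Q(T_V)$, i.e. $\lip\omega_\Sigma,\varphi_*(\nu)\rip$ is even; and condition (2) holds since $d\mid\lip\vt_i,\varphi_*(\nu)\rip$ for all $i$, which is exactly the divisibility remarked just before the proposition), we get $\widetilde{\varphi_*(\nu)}(\zeta_d)=1$ iff $2d\mid\lip\omega_\Sigma,\varphi_*(\nu)\rip$. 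Finally, by the computation in the proof of Proposition \ref{L.phi.crit}, $\lip\omega_\Sigma,\varphi_*(\nu)\rip=L_\varphi(\nu)$, which gives $(2)\Leftrightarrow(3)$.

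The main obstacle is bookkeeping rather than mathematics: one must verify that the hypotheses (1) and (2) of Lemma \ref{blue.shirt} are genuinely in force for the cocharacter $\varphi_*(\nu)$ of $T_V$, and that the $\Sigma$ used to compute $L_\varphi(\nu)$ (chosen inside $P_V^1$ as in Proposition \ref{L.phi.crit}) is a legitimate choice for applying Lemma \ref{blue.shirt}(3) — recall the last line of that lemma's proof shows the answer is independent of the choice of $\Sigma$ modulo $2d$. I would also need to be slightly careful about the claim $d\mid\lip\mu,\nu\rip$ for all weights $\mu$ of $\varphi$: this follows because $\nu(\zeta_d)\in C\subseteq\ker\varphi$ acts trivially, so acts by $\zeta_d^{\lip\mu,\nu\rip}=1$ on each weight space $V^\mu$, forcing $d\mid\lip\mu,\nu\rip$; translating to $T_V$-weights $\vt_i$ of $V$ this gives $d\mid\lip\vt_i,\varphi_*(\nu)\rip$, which is hypothesis (2). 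Assembling these pieces yields the three-way equivalence.
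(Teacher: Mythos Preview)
Your proposal is correct and follows essentially the same route as the paper: the equivalence $(1)\Leftrightarrow(2)$ is the content of the Descent Method paragraph, and $(2)\Leftrightarrow(3)$ comes from applying Lemma~\ref{blue.shirt}(3) to the lifted cocharacter $\hat\varphi_*\nu$ and then identifying $\lip\omega_\Sigma,\varphi_*\nu\rip$ with $L_\varphi(\nu)$ via the computation in Proposition~\ref{L.phi.crit}. One small correction: in the setup here $\hat\varphi$ is \emph{given} as part of the hypotheses (``let $\hat\varphi:G\to\Spin(V)$ be a morphism''), so your parenthetical about its existence via Proposition~\ref{lift.proposition} or simple connectedness is unnecessary---you use it correctly a few lines later when you write $\widetilde{\varphi_*(\nu)}=\hat\varphi\circ\nu$.
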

 
 \begin{proof}Choose  $\Sigma$ as in  the proof of Proposition \ref{L.phi.crit}. Note that $\hat \varphi_* \nu \in X_*(\tilde T_V)$ is a lift of $\varphi_* \nu \in X_*(T_V)$.  By Lemma \ref{blue.shirt}, we have  $\hat \varphi (\nu(\zeta_d))=1$ iff
 \beq
 2d \mid \lip \omega_\Sigma,\varphi_*\nu \rip, 
 \eeq
which, as in the proof of Proposition  \ref{L.phi.crit}, is equal to $L_\varphi(\nu)$.
 \end{proof}
 
\subsection{Application to $\SL_n$}
 
We return to $G=\SL_n$, with $n$ even. For $1 \leq i \leq n$, put $\varpi_i =\vt_1+ \cdots + \vt_i$.  Let $\nu_0 \in X_*(T)$ be the cocharacter defined by
 \beq
 \nu_0(t)=\diag(t,t,\ldots, t,t^{1-n}).
 \eeq
 
 Let $d$ be an even divisor of $n$. Then  $\nu_0(\zeta_d)$ generates $\mu_d <G$, so by Proposition \ref{criterion.2020},  the representation $(\ol \varphi_\la,V)$ of $G_d$ is spinorial iff $2d$ divides $L_\varphi(\nu_0)$.

 \begin{prop} The adjoint representation of $G_d$ is spinorial iff $\dfrac{n}{d}$ is even.
 \end{prop}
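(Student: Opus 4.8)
The plan is to apply Proposition \ref{criterion.2020} with $\varphi$ the adjoint representation of $G=\SL_n$ on $\mf g=\mf{sl}_n$. This representation is irreducible (its highest weight is the highest root $\vt_1-\vt_n$), it carries the Killing form and hence is orthogonal, and the center $\mu_d$ acts trivially on it; so it descends to an orthogonal representation of $G_d$, namely the adjoint representation of $G_d$. Since $\nu_0(\zeta_d)$ generates $\mu_d$, Proposition \ref{criterion.2020} tells us that this descended representation is spinorial if and only if $2d$ divides $L_\varphi(\nu_0)$.

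The next step is to evaluate $L_\varphi(\nu_0)$. The weights of $\varphi$ are the roots $\vt_i-\vt_j$ for $i\neq j$, each of multiplicity one, together with the zero weight of multiplicity $n-1$, which contributes nothing. In the coordinates $(1,\ldots,1,1-n)$ for $\nu_0$ one computes $\lip \vt_i-\vt_j,\nu_0\rip=0$ whenever $i,j<n$, and $\lip \vt_i-\vt_n,\nu_0\rip=n$ for each $i<n$. Thus only the $n-1$ roots $\vt_i-\vt_n$ pair positively with $\nu_0$, so
\beq
L_\varphi(\nu_0)=\sum_{i=1}^{n-1}\lip \vt_i-\vt_n,\nu_0\rip=n(n-1).
\eeq

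It remains to decide when $2d\mid n(n-1)$. Because $n$ is even, $n-1$ is odd, so $\gcd(2,n-1)=1$; and because $d\mid n$, any common divisor of $d$ and $n-1$ divides $\gcd(n,n-1)=1$. Hence $\gcd(2d,n-1)=1$, and therefore $2d\mid n(n-1)$ if and only if $2d\mid n$, which is to say $\frac{n}{d}$ is even. Combined with the first two paragraphs this proves the proposition. (Alternatively one could invoke the earlier criterion that the adjoint representation is spinorial iff $\delta\in X^*(T_d)$, and check directly when the half-sum of positive roots is trivial on $\mu_d$; this leads to the same divisibility $d\mid \frac{n(n-1)}{2}$.) I do not anticipate any real obstacle here: the only steps requiring attention are reading off the pairings $\lip \vt_i-\vt_j,\nu_0\rip$ correctly and the small coprimality bookkeeping at the end.
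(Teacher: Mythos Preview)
Your proof is correct and follows essentially the same approach as the paper's: apply Proposition~\ref{criterion.2020} with $\varphi=\ad$ and $\nu_0$, compute $L_{\ad}(\nu_0)=n(n-1)$ via the pairings $\lip \vt_i-\vt_n,\nu_0\rip=n$, and conclude. You spell out the coprimality argument for $2d\mid n(n-1)\Leftrightarrow 2d\mid n$ more explicitly than the paper does, but otherwise the arguments coincide.
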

 \begin{proof}
  For $\varphi=\ad$, we have
  \beq
  \begin{split}
  L_{\ad}(\nu_0) &= \sum_{\alpha \in R: \lip \alpha,\nu_0 \rip>0} \lip \alpha,\nu_0 \rip \\
  			&=\sum_{i=1}^{n-1} \lip \vt_i-\vt_n,\nu_0 \rip \\
			&=   n(n-1). \\
			\end{split}
			\eeq

   This is divisible by $2d$ iff $\dfrac{n}{d}$ is even.
  \end{proof}

\subsection{Case where $n/d$ is even}
 
For all   $q \in Q(T)$, the quantity $\lip q,\nu_0 \rip$ is divisible by $n$. Since all weights of $V_\la$ are congruent mod $Q(T)$, we deduce that 
\begin{equation} \label{chill}
L_{\varphi_\la}(\nu_0) \equiv  \lip \la,\nu_0 \rip \cdot \left(\sum_{\mu : \lip \mu,\nu_0 \rip >0} m_\mu \right) \mod n.
\end{equation}

 \begin{prop} \label{sl_n.done} Suppose that $2d$ divides $n$, and an irreducible orthogonal representation $\varphi_\la$ of $\SL_n$ descends to the orthogonal representation $\ol{\varphi_\la}$ of $G_d$.
 
 \begin{enumerate}
\item  If $V_\la$ is odd-dimensional, then $\ol \varphi_\la$ is spinorial.
 \item If $V_\la$ is even-dimensional, then $\ol \varphi_\la$ is spinorial iff the product $\half \dim V_\la \cdot \lip \la,\nu_0 \rip$ is divisible by $2d$.
\end{enumerate}
 
 \end{prop}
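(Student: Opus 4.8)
The plan is to reduce the statement to the congruence \eqref{chill} already in hand.  By Proposition~\ref{criterion.2020}, the descended representation $\ol\varphi_\la$ of $G_d$ is spinorial exactly when $2d \mid L_{\varphi_\la}(\nu_0)$, and since $2d \mid n$ we may reduce \eqref{chill} modulo $2d$ to obtain
\beq
L_{\varphi_\la}(\nu_0) \equiv \lip \la,\nu_0 \rip \cdot a \mod 2d, \qquad a := \sum_{\mu : \lip \mu,\nu_0 \rip > 0} m_\mu ,
\eeq
so the whole question becomes understanding $\lip \la,\nu_0 \rip \cdot a$ modulo $2d$.  Since $\varphi_\la$ is orthogonal it is self-dual, so $m_\mu = m_{-\mu}$; hence the weights with $\lip\mu,\nu_0\rip < 0$ also contribute total multiplicity $a$, and we can write $\dim V_\la = 2a + b$ with $b := \sum_{\mu : \lip\mu,\nu_0\rip = 0} m_\mu$.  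I would also record the elementary fact underlying \eqref{chill}: every weight $\mu$ of $V_\la$ satisfies $\lip\mu,\nu_0\rip \equiv \lip\la,\nu_0\rip \mod n$ (all weights being congruent mod $Q(T)$, on which $\nu_0$ takes values in $n\Z$), so if $b > 0$ --- that is, if $V_\la$ has a weight of positive multiplicity killed by $\nu_0$ --- then $n \mid \lip\la,\nu_0\rip$, whence $2d \mid \lip\la,\nu_0\rip$.

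Part (1) then follows immediately: if $\dim V_\la$ is odd, then $b = \dim V_\la - 2a$ is odd, so $b > 0$, so $2d \mid \lip\la,\nu_0\rip$ and therefore $2d \mid L_{\varphi_\la}(\nu_0)$; thus $\ol\varphi_\la$ is spinorial.  For part (2), suppose $\dim V_\la$ is even, so $b$ is even and
\beq
\lip\la,\nu_0\rip \cdot a = \half \dim V_\la \cdot \lip\la,\nu_0\rip - \half b \cdot \lip\la,\nu_0\rip .
\eeq
If $b = 0$ the last term vanishes; if $b > 0$ then $n \mid \lip\la,\nu_0\rip$, and combining $2d \mid n$ with the integrality of $\half b$ gives $2d \mid \half b \cdot \lip\la,\nu_0\rip$.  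In either case $L_{\varphi_\la}(\nu_0) \equiv \half \dim V_\la \cdot \lip\la,\nu_0\rip \mod 2d$, which is precisely the asserted criterion.

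The whole argument is bookkeeping with congruences once \eqref{chill} is available; the only steps deserving care are the observation that the presence of a weight killed by $\nu_0$ forces $n \mid \lip\la,\nu_0\rip$, and, in part (2), keeping track of the extra factor of $2$ coming from $\half b$.
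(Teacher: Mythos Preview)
Your argument is correct and follows essentially the same approach as the paper: both reduce to \eqref{chill}, use that all weights of $V_\la$ pair with $\nu_0$ congruently mod $n$, and split into cases according to whether some weight is killed by $\nu_0$. The only cosmetic difference is that you organize the even-dimensional case around the identity $\lip\la,\nu_0\rip\cdot a = \half\dim V_\la\cdot\lip\la,\nu_0\rip - \half b\cdot\lip\la,\nu_0\rip$ and split on $b=0$ versus $b>0$, whereas the paper splits directly on whether $n\mid\lip\la,\nu_0\rip$; these are equivalent dichotomies.
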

 
 \begin{proof} 
 
 If $\varphi_\la$ is orthogonal with odd degree, then the trivial weight must occur in $V_\la$, which implies that $\la \in Q(T)$. From \eqref{chill}, we see that $L_\varphi(\nu_0)$ is divisible by $n$, and the first statement follows.
 
 Now suppose $\varphi_\la$ has even degree. If $\lip \la,\nu_0 \rip$ is divisible by $n$, then $\ol \varphi_\la$ is spinorial and the second statement is clear.
 If $\lip \la,\nu_0 \rip$ is not divisible by $n$, then for all $\mu$ occurring in $V_\la$, it must be that $\lip \mu, \nu_0 \rip \neq 0$. It follows that
 \beq
\sum_{\mu : \lip \mu,\nu_0 \rip >0} m_\mu = \half  \dim V_\la,
 \eeq
 and the second statement follows from \eqref{chill}.
 \end{proof}

 Let $\varpi_i^o=\varpi_i+\varpi_{n-i}$ for $1 \leq i < \frac{n}{2}$ and
 \beq
 S_o= \left\{\varpi_i^o \mid 1 \leq i<\frac{n}{2} \right\} \cup \left\{ \varpi_{n/2} \right\}.
 \eeq
 
It is easy to see that  $S_o$ is a POSS for $G_d$. Note that $\lip \varpi_i^o,\nu_0 \rip=n$ for $1 \leq i < \frac{n}{2}$, and $\lip \varpi_{n/2}, \nu_0 \rip=n/2$.

\begin{prop} Suppose $2d$ divides $n$. For each $ 1 \leq i<\frac{n}{2}$, the representation $V_{\varpi_i^o}$ of $G_d$ is spinorial.
\end{prop}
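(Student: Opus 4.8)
The plan is to apply Proposition \ref{criterion.2020}, which says that the representation $V_{\varpi_i^o}$ of $G_d$ is spinorial iff $2d$ divides $L_{\varphi_{\varpi_i^o}}(\nu_0)$. Since $2d$ divides $n$, it suffices to show that $L_{\varphi_{\varpi_i^o}}(\nu_0)$ is divisible by $n$. Here $\varphi_{\varpi_i^o}$ denotes the irreducible representation of $\SL_n$ with highest weight $\varpi_i^o = \varpi_i + \varpi_{n-i}$; one must first check that this representation is orthogonal and descends to $G_d$, which is routine since $\varpi_i^o$ is self-dual and lies in the appropriate sublattice (the relevant congruence on $\sum x_i$ holds because $\varpi_i + \varpi_{n-i}$ pairs to $n$ against $\nu_0$).

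The key computation is to invoke \eqref{chill}, which gives
\beq
L_{\varphi_{\varpi_i^o}}(\nu_0) \equiv \lip \varpi_i^o, \nu_0 \rip \cdot \left( \sum_{\mu : \lip \mu,\nu_0 \rip > 0} m_\mu \right) \mod n.
\eeq
Since $\lip \varpi_i^o, \nu_0 \rip = n$ for $1 \leq i < \frac{n}{2}$, the right-hand side is congruent to $0 \mod n$ regardless of the multiplicity sum. Hence $n \mid L_{\varphi_{\varpi_i^o}}(\nu_0)$, and a fortiori $2d \mid L_{\varphi_{\varpi_i^o}}(\nu_0)$ because $2d \mid n$. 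By Proposition \ref{criterion.2020}, $V_{\varpi_i^o}$ is spinorial over $G_d$.

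The only real content beyond bookkeeping is verifying the hypotheses needed to cite \eqref{chill} and Proposition \ref{criterion.2020}: namely that $\nu_0(\zeta_d)$ generates $\mu_d$ (already established in this subsection), that $\varphi_{\varpi_i^o}$ is orthogonal, and that it descends to $G_d$. None of these is a genuine obstacle; the main point — and the place where the argument could go wrong if $i = n/2$ — is the clean fact that $\lip \varpi_i^o, \nu_0 \rip = n$ exactly when $i < n/2$, which is why the statement excludes $i = n/2$. (The case $i = n/2$ is governed instead by $\lip \varpi_{n/2}, \nu_0 \rip = n/2$ and will be handled separately.)
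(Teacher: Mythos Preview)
Your proof is correct and follows essentially the same approach as the paper. The paper simply cites Proposition~\ref{sl_n.done}, whose proof rests on precisely the ingredients you invoke directly---Proposition~\ref{criterion.2020} and the congruence~\eqref{chill}; your version is a slight streamlining in that observing $\lip \varpi_i^o,\nu_0\rip = n$ lets you bypass the odd/even dimension split in Proposition~\ref{sl_n.done} altogether.
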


\begin{proof} This follows from Proposition \ref{sl_n.done}.
\end{proof}
 
 \bigskip
 
  The representation $V_\la$ for $\la=\varpi_{n/2}$ is the exterior power $\wedge^{n/2}V_0$, where $V_0$ is the standard representation of $\SL_n$.

 \begin{prop} Let $\la=\varpi_{n/2}$. Then $\ol \varphi_\la$ is aspinorial iff $n$ is a power of $2$ and $d=n/2$.
 \end{prop}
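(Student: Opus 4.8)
The plan is to feed $\la=\varpi_{n/2}$ into Proposition~\ref{sl_n.done}(2) and then reduce the resulting divisibility to a $2$-adic valuation count. Since $V_\la=\wedge^{n/2}V_0$ has dimension $\binom{n}{n/2}=2\binom{n-1}{n/2-1}$, it is even-dimensional, so we land in case (2) of that proposition; combining this with $\lip\varpi_{n/2},\nu_0\rip=n/2$ (recorded just above it), the proposition says that $\ol\varphi_\la$ is spinorial exactly when $2d$ divides $\tfrac12\dim V_\la\cdot\lip\la,\nu_0\rip=\binom{n-1}{n/2-1}\cdot\tfrac n2$. So everything comes down to determining for which even $d$ with $2d\mid n$ this divisibility fails.

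Next I would pass to $2$-adic valuations. Writing $n=2^am$ with $m$ odd (note that $d$ even and $2d\mid n$ force $4\mid n$, so $a\ge 2$), the condition $d\mid n/2=2^{a-1}m$ forces the odd part of $d$ to divide $m$, hence to divide the odd part of $\binom{n-1}{n/2-1}\cdot\tfrac n2$; therefore the divisibility $2d\mid\binom{n-1}{n/2-1}\cdot\tfrac n2$ is equivalent to the purely $2$-adic inequality $1+\ord_2(d)\le (a-1)+\ord_2\!\binom{n-1}{n/2-1}$. The one genuine computation is $\ord_2\binom{n-1}{n/2-1}$: by Kummer's theorem this is the number of carries in adding $n/2-1$ and $n/2$ in base $2$, i.e.\ $s(n/2-1)+s(n/2)-s(n-1)$ where $s(\cdot)$ denotes the binary digit sum. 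Plugging in $n/2=2^{a-1}m$, $n/2-1=2^{a-1}(m-1)+(2^{a-1}-1)$, and $n-1=2^a(m-1)+(2^a-1)$ --- in each of which the two summands occupy disjoint bit positions, so the digit sums add --- this collapses to $\ord_2\binom{n-1}{n/2-1}=s(m)-1$.

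Finally I would assemble the inequality. Spinoriality becomes $1+\ord_2(d)\le a+s(m)-2$, so aspinoriality becomes $\ord_2(d)\ge a+s(m)-2$. Since $d\mid 2^{a-1}m$ gives $\ord_2(d)\le a-1$, aspinoriality forces $a-1\ge a+s(m)-2$, i.e.\ $s(m)\le 1$, i.e.\ $m=1$ and $n=2^a$; in that case the condition reads $\ord_2(d)\ge a-1$, which together with $\ord_2(d)\le a-1$ yields $\ord_2(d)=a-1$, i.e.\ $d=2^{a-1}=n/2$. Conversely, these two choices make the inequality fail, so $\ol\varphi_\la$ is aspinorial, proving the equivalence. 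The only step requiring care is the binary bookkeeping in the Kummer computation --- verifying that the low blocks $2^{a-1}-1,\,2^a-1$ and the high blocks $2^{a-1}(m-1),\,2^a(m-1)$ occupy disjoint bit positions --- together with the odd-part observation that lets one drop from ``$2d\mid N$'' to a pure power-of-$2$ comparison; everything else is a short calculation.
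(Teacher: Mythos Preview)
Your proof is correct and follows essentially the same route as the paper: both reduce via Proposition~\ref{sl_n.done}(2) to the divisibility of $\tfrac12\dim V_\la\cdot\lip\la,\nu_0\rip=\tfrac{n}{4}\binom{n}{n/2}$ by $2d$, and both hinge on the $2$-adic valuation of the central binomial coefficient. The only difference is presentation: where the paper invokes the elementary fact that $4\mid\binom{n}{n/2}$ iff $n$ is not a power of $2$ and then factors the expression as $d\cdot\tfrac{n}{2d}\cdot\tfrac12\binom{n}{n/2}$, you instead compute $\ord_2\binom{n-1}{n/2-1}=s(m)-1$ directly via Kummer and carry out the full $2$-adic comparison; these are equivalent, since $\ord_2\binom{n}{n/2}=s(m)$.
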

 
 \begin{proof} From elementary number theory we know that $\dim V_\la=\binom{n}{n/2}$ is even, and divisible by $4$ iff $n$ is not a power of $2$.
 We have
 \begin{equation} \label{binoms}
 \half \dim V_\la \cdot \lip \la,\nu_0 \rip=\frac{n}{4} \binom{n}{n/2}.
 \end{equation}
 This is divisible by $d$, since the binomial coefficient is always even, and $d$ divides $\frac{n}{2}$. Thus \eqref{binoms} is divisible by $2d$ unless both $\frac{n}{2d}$ is odd, and $n$ is a power of $2$. In this case it must be that $n$ is a power of $2$ and $n=2d$.
 \end{proof}

 \begin{thm} Suppose that $n/d$ is even. Unless $n=2^{k+1}$ for some $k \geq 1$ and $d=2^k$, every orthogonal representation of $G_d=\SL_n/\mu_d$ is spinorial.
 \end{thm}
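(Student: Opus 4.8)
The plan is to assemble this from the POSS formalism of Section~\ref{tensor.section} together with the spinoriality computations for the individual weights in $S_o$ just carried out. First I would dispose of the case $d$ odd: then $\pi_1(G_d)\cong\Z/d\Z$ has odd order, so the induced map $\varphi_*\colon\pi_1(G_d)\to\pi_1(\SO(V))\cong\Z/2\Z$ is automatically trivial for every orthogonal $\varphi$, hence $\varphi$ is spinorial. So from now on I may assume $d$ is even, and the hypothesis that $n/d$ is even then reads $2d\mid n$, which is exactly the running assumption of the preceding subsection.

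Next I would invoke Proposition~\ref{pos.basis}: since $\mf g=\mf{sl}_n$ is simple and $S_o=\{\varpi_i^o\mid 1\le i<\tfrac n2\}\cup\{\varpi_{n/2}\}$ is a POSS for $G_d$, it suffices to check that $\varphi_\la$ is spinorial for each $\la\in S_o$. For the weights $\varpi_i^o$ with $1\le i<\tfrac n2$ this was established above. For the remaining weight $\varpi_{n/2}$, recall $V_{\varpi_{n/2}}=\wedge^{n/2}V_0$, and that we showed $\ol\varphi_{\varpi_{n/2}}$ is spinorial unless $n$ is a power of $2$ and $d=n/2$. Hence, as long as we are not in that last situation, every member of $S_o$ yields a spinorial representation, and Proposition~\ref{pos.basis} gives that every orthogonal representation of $G_d$ is spinorial.

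Finally I would unwind the exceptional case: ``$d$ even, $n$ a power of $2$, and $d=n/2$'' is equivalent to ``$n=2^{k+1}$ and $d=2^k$ for some $k\ge 1$'', since $d=n/2$ even forces $n\ge 4$, so $n=2^{k+1}$ with $k\ge1$ and $d=2^k$, while the converse (which in particular has $n/d=2$ even, consistent with the hypothesis) is immediate. This is precisely the case excluded in the statement. I do not expect a serious obstacle: the substantive content — the reduction to the cocharacter $\nu_0$, the congruence \eqref{chill}, and the $2$-adic estimate for $\tfrac n4\binom{n}{n/2}$ — has already been packaged into the propositions of this section, so what remains is purely the bookkeeping of combining them and matching the parametrization of the exceptional case; the only point deserving a moment's care is verifying the hypotheses of Proposition~\ref{pos.basis}, namely that $\mf{sl}_n$ is simple and that $S_o$ genuinely spans the dominant orthogonal cone of $G_d$ over $\Z_{\ge 0}$, both of which are routine. (One may also remark that the exclusion is sharp: in that case $\wedge^{n/2}V_0$ is an aspinorial orthogonal representation of $G_d$, the case $n=4$, $d=2$ recovering the standard $6$-dimensional representation of $\SL_4/\mu_2$.)
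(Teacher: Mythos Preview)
Your proposal is correct and follows essentially the same route as the paper: reduce via Proposition~\ref{pos.basis} to the POSS $S_o$, then cite the two preceding propositions for $\varpi_i^o$ and $\varpi_{n/2}$. Your extra care in disposing of the $d$ odd case and in rephrasing the exceptional parameters $(n,d)=(2^{k+1},2^k)$ is harmless elaboration; the paper's own proof is the two-line version of what you wrote.
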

 
  \begin{proof} 
  By Proposition \ref{pos.basis}, it is enough to check that $\ol \varphi_\la$ is spinorial for each $\la \in S_o$. But we have done this.
  \end{proof}

\begin{example}
Although the adjoint representation of $G_2=\SL_4/\{ \pm 1\}$ is spinorial, the representation $V_{\varpi_2}=\wedge^2 F^4$ of $G_2$ is aspinorial. 
\end{example} 
 
 \begin{remark} What makes the ``descent method" work in the case of $G_d$ with $n/d$ even is the fortunate fact that $\lip q,\nu_0 \rip$ is divisible by $n$ for $q \in Q(T)$. In other contexts, it is unclear how to compute $L_\varphi(\nu) \mod 2d$.
 \end{remark}

  \subsection{Summary for the groups $G_d$}
  
  Let $n$ be a positive integer, $d$ a divisor of $n$, and $G_d=\SL_n/\mu_d$.  
  
  From the above we have:
  
  \begin{itemize}
  \item If $d$ is odd, then every orthogonal representation of $G_d$ is spinorial.
  \item If $n$ is even and $n/d$ is odd, then the adjoint representation of $G_d$ is aspinorial.
  \item If $n$ is a power of $2$ and $d=n/2$, then $\wedge^{d} V_0$ is an aspinorial representation of $G_d$.
  \item If $n/d$ is even, then every orthogonal representation of $G_d$ is spinorial, unless $n$ is a power of $2$ and $d=n/2$.
  \end{itemize}
 
 In particular, every orthogonal representation of $G_d$ is spinorial iff $n$ is odd, or $n/d$ is even with $(n,d) \neq (2^{k+1},2^k)$.

\section{Type $C_n$} \label{c.section}

Let $J$ be the $2n \times 2n$ matrix $\mat 0I{-I}0$, where $I$ is the $n \times n$ identity matrix.  We let 
\beq 
G_{\simp}=\Sp_{2n} =\{g \in \GL_{2n} \mid g^t Jg=J\}.
\eeq
Write $T_{\simp}$ for the diagonal torus in $G_{\simp}$. A typical element is $\diag(t_1, \ldots, t_n, t_1^{-1}, \ldots, t_n^{-1})$.  
We identify $X_*(T_{\simp})$ with $\Z^n$ by $(b_1, \ldots, b_n) \mapsto \nu$, where
\beq
\nu(t)=\diag(t^{b_1}, \ldots, t^{b_n}, \ldots).
\eeq

Put $G=\Sp_{2n}/\{ \pm 1\}$, and let $T$ be the image of $T_{\simp}$ under the quotient. Then $X_*(T_{\simp})$ has index $2$ in $X_*(T)$; more precisely we may write
\beq
X_*(T)=X_*(T_{\simp})+\Z \cdot \nu_0,
\eeq
where
$\nu_0=\half(1,1,\ldots, 1)$. In particular, $\pi_1(G)$ is cyclic of order $2$, generated by $\nu_0$.

\begin{remark} One way to understand $\nu_0$ is through the isomorphism $\Sp_{2n}/\{\pm 1\} \cong \GSp_{2n}/Z$, where $\GSp_{2n}$ is the general symplectic group defined with $J$, and $Z$ is its center. The cocharacter 
\beq
t \mapsto \diag(\underbrace{t,t, \ldots, t}_{n \text{ times}}, 1,1, \ldots, 1),
\eeq
of the diagonal torus of $\GSp_{2n}$, when projected to $T$, is $\nu_0$.
\end{remark}

 We have $p(\nu_0)=\half n(n+1)$.
Every representation $\varphi$ of $G$ is orthogonal. Since $\check h=n+1$, we have by Corollary \ref{blum}:
 \begin{equation} \label{cn.form}
\begin{split}
 q_\varphi(\nu_0) &= \frac{\half n(n+1)}{\check h} \dyn^o(\varphi) \\
 		&=\half n \cdot \dyn^o(\varphi). \\
		\end{split}
		\end{equation}

 \begin{prop}  Every representation of $\Sp_{2n}/\{ \pm 1\}$ is spinorial iff $4|n$.
 \end{prop}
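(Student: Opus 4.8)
The plan is to combine the formula $q_\varphi(\nu_0) = \frac{n}{2}\,\dyn^o(\varphi)$ from \eqref{cn.form} with Corollary \ref{Qnugen.even}. Since $\pi_1(G)$ is generated by $\nu_0$, that corollary says an orthogonal representation $\varphi$ of $G$ is spinorial exactly when $q_\varphi(\nu_0)$ is even, that is, exactly when $4 \mid n\,\dyn^o(\varphi)$. The ``if'' direction is then immediate: if $4 \mid n$ then $4 \mid n\,\dyn^o(\varphi)$ for every $\varphi$, so every representation of $G$ is spinorial.

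For the ``only if'' direction I would exhibit, for each $n$ with $4 \nmid n$, an aspinorial representation. The representations of $G = \Sp_{2n}/\{\pm 1\}$ are precisely those representations of $\Sp_{2n}$ whose highest weight lies in the root lattice of $C_n$; in particular the adjoint representation (highest weight $2\varpi_1$) descends to $G$, as does $\varphi_{\varpi_2}$ when $n \geq 2$. From Theorem \ref{dynkin.thm} applied to $V = \mf g$ one gets $\dyn(\ad) = 2\check h$, hence $\dyn^o(\ad) = \check h = n+1$; and the Dynkin index tables give $\dyn(\varphi_{\varpi_2}) = 2(n-1)$, hence $\dyn^o(\varphi_{\varpi_2}) = n-1$. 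Therefore $q_{\ad}(\nu_0) = \binom{n+1}{2}$ and $q_{\varphi_{\varpi_2}}(\nu_0) = \binom{n}{2}$.

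Now I would use that $\binom{m}{2}$ is odd exactly when $m \equiv 2$ or $3 \mod 4$. This shows that the adjoint representation is aspinorial whenever $n \equiv 1$ or $2 \mod 4$, and that $\varphi_{\varpi_2}$ is aspinorial whenever $n \equiv 2$ or $3 \mod 4$. Since $4 \nmid n$ forces $n \equiv 1, 2,$ or $3 \mod 4$, one of these two representations is aspinorial in every case (and for $n = 1$ the adjoint representation alone does the job), which gives the reverse implication. As a consistency check, the claim that $\ad$ is aspinorial iff $n \equiv 1, 2 \mod 4$ also matches the earlier criterion that the adjoint representation is spinorial iff $\delta \in X^*(T)$.

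The remaining verifications are routine: that $2\varpi_1$ and (for $n \geq 2$) $\varpi_2$ lie in the root lattice of $C_n$, so that $\ad$ and $\varphi_{\varpi_2}$ genuinely descend to $\Sp_{2n}/\{\pm 1\}$; the Dynkin-index values $\dyn^o(\ad) = n+1$ and $\dyn^o(\varphi_{\varpi_2}) = n-1$; and the parity of $\binom{m}{2}$. I do not anticipate any real obstacle — the only care needed is the bookkeeping modulo $4$, i.e.\ checking that the representations $\ad$ and $\varphi_{\varpi_2}$ between them handle all three nonzero residue classes.
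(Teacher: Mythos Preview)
Your proposal is correct and follows essentially the same route as the paper: both use \eqref{cn.form} for the ``if'' direction, and for the ``only if'' direction both exhibit the adjoint representation as aspinorial when $n\equiv 1,2 \pmod 4$ and the second fundamental representation $\varphi_{\varpi_2}$ when $n\equiv 3 \pmod 4$. Your version is a bit more explicit (computing $\dyn^o(\ad)=n+1$, $\dyn^o(\varphi_{\varpi_2})=n-1$, and checking descent), but the argument is the same.
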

 
\begin{proof} If $n \equiv 0 \mod 4$, then every representation is spinorial by \eqref{cn.form}. If $n \equiv 1,2 \mod 4$, then the adjoint representation is aspinorial, and if $n \equiv 3 \mod 4$, then 
the second fundamental representation is aspinorial.
\end{proof}

 \section{Type $D_n$} \label{d.section}
 
 The simply connected group of type $D_n$ is $G_{\simp}=\Spin_{2n}$.  The center $Z$ of $G_{\simp}$ has order $4$; in the notation of  Section \ref{tori.spin.section}, it is generated by $c^+$ when $n$ is odd, and generated by $c^+$ and $z$ when $n$ is even.

Thus the groups of type $D_n$ for $n$ odd are $G_{\simp}$ and its quotients $\SO_{2n}$ and $\PSO_{2n}$.  When $n$ is odd, the adjoint representation of $\PSO_{2n}$ is aspinorial, which ends our investigation in this case. Henceforth in this section, we will assume that $n$ is even, and to ensure $\mf g$ is simple we take $n>2$. (See Example \ref{so.4.example} for $\SO_4$.)

For $n$  even, there are two more groups of type $D_{2n}$: the quotient  $G^+_{2n}$  of $G_{\simp}$ by $\lip c^+ \rip$, and the quotient $G^-_{2n}$ of $G_{\simp}$ by $\lip -c^+ \rip$. Write 
$T_{2n}=T_V$ where $V=F^{2n}$, write $\widetilde T_{2n}< G_{\simp}$ for its preimage, and $T_{2n}^{\pm}$, and $\ol T_{2n}$ for the corresponding tori of $G_{2n}^{\pm}$ and $\PSO_{2n}$. The lattice of cocharacters corresponding to these quotients is depicted in Figure 1.

 \begin{figure}
\centering
\begin{tikzpicture}[node distance=2.5cm,line width=1pt]
\title{Cocharacter Lattice}
\node(PT) at (0,0)     {$X_*(\overline T_{2n})$};
\node(T+) [below left=0.5cm of PT] {$X_*(T_{2n}^+)$};
\node(T) [below =0.4cm of PT] {$X_*(T_{2n})$};
\node(T-) [below right=0.5cm of PT] {$X_*(T_{2n}^-)$};
\node(TSC) [below=0.5cm of T] {$X_*(\widetilde T_{2n})$};

\draw(PT) --(T+);
\draw(PT)-- (T);
\draw(PT) -- (T-);
\draw(T) --(TSC);
  \draw(T+) --(TSC);
   \draw(T-) --(TSC); 
 
\end{tikzpicture}
\caption{Cocharacter Lattice for $D_{2n}$}
\end{figure}
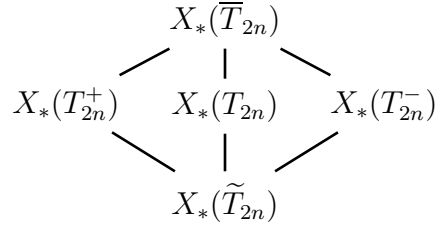

Recall that we identified $X_*(T_{2n})$ with $\Z^n$ in Section \ref{tori.spin.section}.
Let
\beq
Q=X_*(\widetilde T_{2n})=\left \{ (b_1, \ldots, b_n) \in  \Z^n \mid \sum b_i \text{ is even} \right\}.
\eeq
Then
\beq
X_*(T_{2n}^{\pm})=Q+\Z \cdot \half(1,1, \ldots, \pm 1)
\eeq
and
\beq 
X_*(\ol T_{2n})=\Z^n+\Z \cdot \half(1,1, \ldots,  1).
\eeq 
 All representations of groups of type $D_n$ are orthogonal.
The standard representation $V_0$ of $\SO_{2n}$ is evidently aspinorial.   Thus, for the rest of this section we   focus on the groups $\PSO_{2n}$ and $G^{\pm}_{2n}$ with $n$ even.

Tables 1 and 2 below record the quantities $p(\ul \nu)$, $\dim V_\la$, and $\chi_\la(C)$, that we need for our formulas.
Here $\varpi_k=(\underbrace{1,1,\ldots,1}_{k \text{ times}},0,\ldots, 0)$ and $\varpi_-=(1,1,\ldots, 1, -1)$. (We parametrize $X_*,X^*$ by $\Z^n$ as in Section \ref{tori.spin.section}

\begin{table}[ht]
	\caption{Computing $p(\ul \nu)$}
	\centering	

\tabcolsep=.5cm
 
\begin{tabular}{|cc|c|c|}
 \hline
 
 $G$  & &    $\ul \nu$ &  $p(\ul \nu)$    \\
\hline
\hline
$\SO_{2n}$ & & $(1,0,\ldots,0)$& $2n-2$ \\
\hline

 $\PSO_{2n}$   & $n \equiv 0 \mod 4$  &  $(1,0,\ldots,0)$, $\half(1,\ldots,1)$ &   $2n-2$\\  
 \hline
 $\PSO_{2n}$ & $n \equiv 2 \mod 4$  &  $(1,0,\ldots,0)$, $\half(1,\ldots,1)$ & $n-1$  \\
\hline
$G^{\pm}_{2n}$ &   & $ \half(1,\ldots,\pm 1)$ &$\dbinom{n}{2}$ \\
 
\hline

\end{tabular}
\label{exam3}
\end{table}

\begin{table}[ht]
	\caption{$\dim V_\la$ and $\chi_\la(C)$ for type $D_n$, $n$ even}
	\centering	

\tabcolsep=.5cm
\begin{tabular}{|c|c|c|}
 \hline
 
 $\la$  & $\dim V_\la$&  $\chi_\la(C)$    \\
\hline
$\varpi_k$ & $\dbinom{2n}{k}$ & $\dfrac{k(2n-k)}{4n-4}$ \\
\hline
$\half \varpi_-$ & $2^{n-1}$ & $\dfrac{n(2n-1)}{16(n-1)}$ \\
 \hline
 $\half \varpi_n$ & $2^{n-1}$ & $\dfrac{n(2n-1)}{16(n-1)}$\\
 \hline
 $\varpi_-$ & $\dfrac{(2n-1)!}{2^n}$ & $\dfrac{n^2}{4n-4}$ \\
 \hline

\end{tabular}
\label{exam}
\end{table}

\begin{remark} Let $n$ be odd. One has similarly $p(\nu_0)=n-2$ for $\SO_n$, with 
\beq
\nu_0(t)=\diag(t,1, \ldots, 1,t^{-1}).
\eeq
\end{remark}

 \subsection{The Case of $\PSO_{2n}$}
 
  If $n \equiv 2 \mod 4$, then the  representation $\varphi$ of   $\PSO_{2n}$ on  $\wedge^2 V_0$  is aspinorial by Corollary \ref{blum}:  here $\dyn^o(\varphi)=2n-2$ and $\check h=2n-2$, so 
  \beq
  p(\ul \nu) \cdot \frac{\dyn^o(\varphi)}{\check h}=n-1.
  \eeq
Let us assume for the rest of this section that $n$ is a multiple of $4$;  we will prove every orthogonal representation is spinorial in this case. We first consider $\varphi_\la$, with $\la$ in the set
\beq
 S_0=\{ \varpi_k, \varpi_- \mid k \text{ even}, 1 \leq k \leq n \}.
 \eeq

 \begin{prop} \label{s0.spinorial} Each representation $\varphi_\la$ of $\SO_{2n}$ with $\la \in S_0$ is spinorial. 
 \end{prop}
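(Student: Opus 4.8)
The plan is to verify spinoriality of each $\varphi_\la$ with $\la \in S_0$ directly, using the criterion from Corollary \ref{blum}: for $\mf g$ simple, $\varphi_\la$ is spinorial iff $p(\ul\nu) \cdot \tfrac{\chi_\la(C)\dim V_\la}{\text{(appropriate normalization)}}$ is even, or equivalently via Theorem \ref{mthm} that $p(\ul\nu)\cdot\tfrac{\tr(C,V_\la)}{\dim\mf g}$ is even. Here $\dim\mf g = n(2n-1)$ for $D_n$, and $\tr(C,V_\la) = \chi_\la(C)\dim V_\la$. Since $n \equiv 0 \bmod 4$, Table \ref{exam3} gives $p(\ul\nu) = 2n-2$ for $\PSO_{2n}$ (and also for $\SO_{2n}$). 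So for each $\la \in S_0$ I would compute the $2$-adic valuation of
\beq
(2n-2)\cdot\frac{\chi_\la(C)\,\dim V_\la}{n(2n-1)}
\eeq
and check it is at least $1$.

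First I would handle $\la = \varpi_k$ with $k$ even, $1 \le k \le n$. From Table \ref{exam}, $\chi_{\varpi_k}(C) = \tfrac{k(2n-k)}{4n-4}$ and $\dim V_{\varpi_k} = \binom{2n}{k}$, so the quantity becomes
\beq
(2n-2)\cdot\frac{k(2n-k)}{(4n-4)\,n(2n-1)}\binom{2n}{k} = \frac{k(2n-k)}{2n(2n-1)}\binom{2n}{k}.
\eeq
Using $\binom{2n}{k} = \tfrac{2n}{k}\binom{2n-1}{k-1}$ and then $\binom{2n-1}{k-1} = \tfrac{2n-k}{2n-1}\cdot\tfrac{2n-1}{2n-k}\binom{2n-1}{k-1}$, I would simplify to an expression of the form $(2n-k)\binom{2n-1}{k-1}$ divided by something odd, or more cleanly rewrite it as a constant times $k(2n-k)\binom{2n}{k}/(2n)$; the key point is that after cancellation the odd factor $2n-1$ disappears and one is left with something manifestly even because $k$ is even (or because the surviving binomial coefficient is even). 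I would track $\ord_2$ of each factor carefully — $\ord_2(k)$, $\ord_2(2n-k) = \ord_2(k)$ when $\ord_2(k) < \ord_2(2n)$, and $\ord_2\binom{2n}{k}$ via Kummer's theorem — to confirm the total valuation is $\ge 1$.

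Then I would treat $\la = \varpi_-$, where $\chi_{\varpi_-}(C) = \tfrac{n^2}{4n-4}$ and $\dim V_{\varpi_-} = \tfrac{(2n-1)!}{2^n}$; the quantity becomes $(2n-2)\cdot\tfrac{n^2(2n-1)!}{(4n-4)\,2^n\,n(2n-1)} = \tfrac{n(2n-2)!}{2^{n+1}}$, and since $n \equiv 0 \bmod 4$ and $(2n-2)!$ carries a large power of $2$ (by Legendre's formula $\ord_2((2n-2)!) = 2n-2-s_2(2n-2) \ge n$ for $n \ge 4$), this is even. The main obstacle I anticipate is the $\varpi_k$ case: the cancellations among $k$, $2n-k$, $2n$, $2n-1$ and $\binom{2n}{k}$ are delicate, and one must be careful that the normalization in Corollary \ref{blum} (involving $\check h = 2n-2$ and $\dyn^o$) is applied consistently with Theorem \ref{mthm}; getting the $2$-adic bookkeeping right for all even $k \le n$ simultaneously, especially near $k = n$ and when $k$ is a high power of $2$, is where the real work lies. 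I would organize this by splitting into cases according to $\ord_2(k)$ versus $\ord_2(n)$.
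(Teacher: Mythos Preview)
Your approach is the same as the paper's: compute $q_{\la}(\nu_0) = p(\ul\nu)\cdot\dfrac{\dim V_\la\,\chi_\la(C)}{\dim\mf g}$ from Tables~1 and~2 and check it is even. Your formula for $\varpi_-$ agrees exactly with the paper's, $q_{\varpi_-} = \dfrac{n(2n-2)!}{2^{n+1}}$, and your Legendre-type argument is fine.

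Where you diverge is in the $\varpi_k$ case. You correctly arrive at
\[
q_{\varpi_k}(\nu_0)=\frac{k(2n-k)}{2n(2n-1)}\binom{2n}{k},
\]
but then propose a case analysis via Kummer's theorem, splitting on $\ord_2(k)$ versus $\ord_2(n)$, and you flag this as the ``main obstacle''. There is no obstacle: the expression simplifies identically to
\[
\frac{k(2n-k)}{2n(2n-1)}\binom{2n}{k}
=\frac{(2n-2)!}{(k-1)!\,(2n-k-1)!}
=\binom{2n-2}{k-1}.
\]
Now $2n-2$ is even and $k-1$ is odd, so $\binom{2n-2}{k-1}$ is automatically even (e.g.\ from $(k-1)\binom{2n-2}{k-1}=(2n-2)\binom{2n-3}{k-2}$, or by Kummer: adding the odd numbers $k-1$ and $2n-k-1$ forces a carry in the units bit). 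No case split, no tracking of $\ord_2(2n-k)$ or $\ord_2\binom{2n}{k}$, is needed. This single identity is the whole content of the paper's proof for $\varpi_k$, and it is what your sketch is missing.
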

 
 \begin{proof} 
  Tables 1 and 2 give
 \begin{equation} \label{flicker1}
 \begin{split}
 q_{\varpi_k} &=\frac{1}{2} \frac{\binom{2n}{k}}{\binom{2n}{2}} \cdot k(2n-k) \\
 		&=\binom{2n-2}{k-1}.\\
		\end{split}
 \end{equation}
 
 Since $k$ is even, this is necessarily even, and we deduce that each $\varphi_{\varpi_k}$ is spinorial. Similarly  
 \begin{equation} \label{flicker2}
 q_{\varpi_-}=\frac{(2n-2)!n}{2^{n+1}};
 \end{equation}
 it is easy to see this is even for all $n$ divisible by $4$, thus $\varphi_{\varpi_-}$ is spinorial. \end{proof}
 
 These representations descend to $\PSO_{2n}$, which are also spinorial by Corollary \ref{ps.equal}. Moreover, formulas \eqref{flicker1} and  \eqref{flicker2}  remain the same when computed for  $\PSO_{2n}$ (since the $p(\ul \nu)$ are the same).

 Let $S_1$ be the set
 \beq
 S_0 \cup \{ \varpi_k+\varpi_\ell \mid k \equiv \ell \mod 2, 1 \leq k,\ell \leq n \} \cup \{\varpi_k+\varpi_- \mid k \text{ even} \} \cup \{2 \varpi_- \}.
 \eeq
 
 Note that $S_1$ has the following property: If $\la \in S_1$ and $\la'$ is a dominant weight with $\la' \prec \la$, then $\la' \in S_1$.

 \begin{prop} \label{s1.ad} Each $\varphi_\la$ with $\la \in S_1$ is spinorial.
 \end{prop}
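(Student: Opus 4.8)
The plan is to induct on $\la$ with respect to the order $\prec$ — equivalently on $|\la+\delta|$, which is strictly increasing along $\prec$ among dominant weights — using Lemma \ref{induct.spin.here}(1) together with the fact, recorded just above, that $S_1$ is closed downward under $\prec$. We work with $\SO_{2n}$ (so that every $\varpi_k$ is a genuine weight and $\pi_1$ is generated by the single cocharacter $\nu_0=(1,0,\ldots,0)$); spinoriality transfers to $\PSO_{2n}$ by Corollary \ref{ps.equal}, since $p(\ul\nu)=2n-2$ has the same $2$-adic valuation in both cases when $4\mid n$. Recall also that every irreducible representation of $\SO_{2n}$ is orthogonal. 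The base of the induction is $S_0$, already handled by Proposition \ref{s0.spinorial}; so assume $\la_0\in S_1\setminus S_0$ and that $\varphi_\la$ is spinorial for all $\la\in S_1$ with $|\la+\delta|<|\la_0+\delta|$.

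For each such $\la_0$ I will produce a splitting $\la_0=\mu_0+\nu_0$ into dominant weights with $\Phi=\varphi_{\mu_0}\otimes\varphi_{\nu_0}$ spinorial; then condition (1) of Lemma \ref{induct.spin.here} holds, because any dominant orthogonal $\la\prec\la_0$ with $\la\neq\la_0$ lies in the root lattice (as $\la\equiv\la_0\equiv 0 \bmod Q$, each element of $S_1\setminus S_0$ having even coordinate sum), hence lies in $S_1$ by downward closure, hence is spinorial — by the inductive hypothesis, or, if $\la=0$, trivially. When $\la_0=\varpi_k+\varpi_\ell$ with $k,\ell$ both even, or $\la_0=\varpi_k+\varpi_-$ with $k$ even, or $\la_0=2\varpi_-$, one takes the obvious splitting into two members of $S_0$; both tensor factors are spinorial by Proposition \ref{s0.spinorial}, so $\Phi$ is spinorial by Corollary \ref{simple.tensor}, and Lemma \ref{induct.spin.here}(1) gives $\varphi_{\la_0}$ spinorial.

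The only delicate case is $\la_0=\varpi_k+\varpi_\ell$ with $k,\ell$ both odd (so $k,\ell\le n-1$ since $n$ is even). Here neither $\varphi_{\varpi_k}$ nor $\varphi_{\varpi_\ell}$ need be spinorial, so Corollary \ref{simple.tensor} is unavailable and I compute directly. Taking $\mu_0=\varpi_k$, $\nu_0=\varpi_\ell$ and using the internal tensor-product formula \eqref{internal.qs} with the value $q_{\varpi_j}(\nu_0)=\binom{2n-2}{j-1}$ (valid for $1\le j\le n-1$ by Proposition \ref{this.cor} and Tables 1--2, exactly as in \eqref{flicker1}), one gets
\beq
q_{\Phi}(\nu_0)=\binom{2n}{k}\binom{2n-2}{\ell-1}+\binom{2n}{\ell}\binom{2n-2}{k-1}.
\eeq
Because $4\mid n$, the units digit of $2n$ in base $2$ is $0$, so $\binom{2n}{j}$ is even for every odd $j$; hence both summands are even, $q_\Phi(\nu_0)$ is even, and $\Phi$ is spinorial by Corollary \ref{Qnugen.even}. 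Lemma \ref{induct.spin.here}(1) then yields that $\varphi_{\la_0}$ is spinorial, closing the induction. The points needing care are precisely this parity bookkeeping and the (harmless) observation that although $\varpi_k,\varpi_\ell$ with $k,\ell$ odd are not weights of $\PSO_{2n}$, the product $\Phi$ and all irreducible summands relevant to condition (1) are genuine representations of $\SO_{2n}$, so the framework applies without change.
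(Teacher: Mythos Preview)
Your argument is correct and follows essentially the same route as the paper's proof: a $\prec$-minimal counterexample/induction argument, reducing via Lemma~\ref{induct.spin.here}(1) and the downward-closure of $S_1$ to showing that a suitable tensor product $\Phi$ is spinorial, with the only nontrivial case being $\la_0=\varpi_k+\varpi_\ell$ for $k,\ell$ odd, handled by the same computation of $q_\Phi$ via \eqref{internal.qs}. Two minor remarks: your appeal to $4\mid n$ for the parity of $\binom{2n}{j}$ with $j$ odd is unnecessary, since $j\binom{2n}{j}=2n\binom{2n-1}{j-1}$ already forces $\binom{2n}{j}$ even whenever $j$ is odd; and your choice to work uniformly in $\SO_{2n}$ and transfer once at the end via Corollary~\ref{ps.equal} is arguably cleaner than the paper's presentation, which mixes the two groups.
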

 
 \begin{proof} 
 Suppose, by way of contradiction, that there are aspinorial $\la \in S_1$. Let $\la \in S_1$ be a $\prec$-minimal  with $\varphi_\la$ aspinorial.
 We show below that for each of the three possibilities of $\la \in S_1$, the representation $\varphi_\la$ is spinorial, a contradiction.
 
  If $\la \in S_0$  it is spinorial by Proposition \ref{s0.spinorial}.
 Otherwise $\la=\la_1+\la_2$ with $\la_1,\la_2 \in S_0$, or $\la=\varpi_k+\varpi_\ell$ with $k,\ell$ odd.
 
   In the first case, let $\Phi=\varphi_{\la_1} \otimes \varphi_{\la_2}$, which is spinorial by Proposition \ref{s0.spinorial}.
By the property of $S_1$ mentioned above, we may apply Lemma \ref{induct.spin.here} (1) to deduce that $\varphi_{\la}$ is spinorial.
 
 In the second case we have $\la=\varpi_k+\varpi_\ell$ with $k,\ell$ odd. Consider the representation $\Phi=\varphi_{\varpi_k} \otimes \varphi_{\varpi_\ell}$ of $\SO_{2n}$. Applying Equation \eqref{internal.qs} to the representations $\varphi_{\varpi_k}$ and $\varphi_{\varpi_\ell}$ of $\SO_{2n}$ gives 
\beq
q_{\Phi}=\binom{2n}{\ell} \binom{2n-2}{k-1}+ \binom{2n}{k} \binom{2n-2}{\ell-1}.
\eeq
Since this is even,  $\Phi$ is a spinorial representation of $\SO_{2n}$. By Corollary \ref{ps.equal} it descends to  a spinorial representation  $\ol \Phi$  of $\PSO_{2n}$.
 
Again, we may apply Lemma \ref{induct.spin.here} (1) to deduce that $\varphi_{\la}$ is spinorial. 
 Thus in all cases we have a contradiction. \end{proof}
 
 \begin{thm} \label{pso.all} When $n$ is divisible by $4$, every representation of $\PSO_{2n}$ is spinorial.
 \end{thm}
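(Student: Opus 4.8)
The plan is to deduce this from Proposition \ref{pos.basis}: it suffices to exhibit a positive orthogonal spanning set for $\PSO_{2n}$ all of whose members are highest weights of spinorial representations, and I claim that the set $S_1$ of Proposition \ref{s1.ad} already works. Since Proposition \ref{s1.ad} shows $\varphi_\la$ is spinorial for every $\la \in S_1$, all that remains is to verify that $S_1$ is a POSS for $\PSO_{2n}$.

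First I would pin down $X_{\orth}^+$ for $\PSO_{2n}$. As every representation of a group of type $D_n$ is orthogonal, $X_{\orth}^+$ is just the set of dominant weights of $\PSO_{2n}$: in the $\Z^n$-coordinates of Section \ref{tori.spin.section}, the $\la=(a_1, \ldots, a_n)$ with $a_1 \ge \cdots \ge a_{n-1} \ge |a_n|$, all $a_i \in \Z$, and $\sum_i a_i$ even --- the dominant part of the $D_n$ root lattice. Given such a $\la$, I would write it in the standard way as a nonnegative integral combination $\sum_k b_k \varpi_k$ of $\varpi_1, \ldots, \varpi_n$ when $a_n \ge 0$, or of $\varpi_1, \ldots, \varpi_{n-1}$ together with $\varpi_-$ when $a_n < 0$. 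The coordinates of $\varpi_k$ sum to $k$, and, because $n$ is even, those of $\varpi_-$ sum to the even number $n-2$; hence the hypothesis that $\sum_i a_i$ be even forces the odd-index weights $\varpi_k$ to appear an even number of times in total. One can therefore group those odd-index terms into pairs $\varpi_k+\varpi_\ell$ with $k,\ell$ odd, each of which lies in $S_1$, while the even-index $\varpi_k$ and the copies of $\varpi_-$ already lie in $S_0 \subseteq S_1$. Thus $\la$ is a nonnegative integral combination of elements of $S_1$, so $S_1$ is a POSS, and since $\mf{so}_{2n}$ is simple (as $n>2$) Proposition \ref{pos.basis} yields the theorem.

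The main obstacle is precisely this combinatorial repackaging: one must check that the decomposition of an arbitrary dominant weight into fundamental weights can always be rewritten as a nonnegative combination of $S_1$-elements, the delicate point being that the root-lattice (even-coordinate-sum) constraint is exactly what makes the odd-index fundamental weights pairable, and that the case $a_n<0$ --- where $\varpi_-$ enters in place of $\varpi_n$ --- is handled uniformly. This is where evenness of $n$ is used; the stronger hypothesis $4 \mid n$ has already been spent, in Propositions \ref{s0.spinorial} and \ref{s1.ad}, to ensure the individual $\varphi_\la$ ($\la \in S_1$) are spinorial.
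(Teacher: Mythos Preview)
Your proposal is correct and follows exactly the paper's own approach: the paper's proof simply asserts that ``it is elementary to see that $S_1$ is a POSS'' and then invokes Propositions \ref{pos.basis} and \ref{s1.ad}. You have supplied the elementary verification the paper omits, and your argument for it is sound (in particular, your observation that the even-coordinate-sum condition forces the odd-index $\varpi_k$ to appear with even total multiplicity, so they can be paired).
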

 
 \begin{proof} It is elementary to see that  $S_1$ is a POSS. 
 Thus the theorem follows by Propositions  \ref{pos.basis} and \ref{s1.ad}.
 \end{proof}
 
  \subsection{The groups $G^{\pm}_{2n}$}
  
Here $\check h=2n-2$ and $p(\nu_0)=\binom{n}{2}$, so by Corollary \ref{blum}:
 \begin{equation} \label{dn.form}
\begin{split}
 q_\varphi(\nu_0) &= \binom{n}{2} \cdot \frac{\dyn^o(\varphi)}{\check h} \\
 		&=\frac{n}{4}\cdot \dyn^o(\varphi). \\
		\end{split}
		\end{equation}

 If $n \equiv 2 \mod 4$, then the  representation $\varphi$ of   $G^{\pm}_{2n}$ on  $\wedge^2 V_0$  is aspinorial, since again $\dyn^o(\varphi)=2n-2$. The half-spin representation $(\varphi_{\half \varpi_4}, V_{\half \varpi_4})$ of $G^+_8$, and the half-spin representation $(\varphi_{\half \varpi_-},V_{\half \varpi_-})$ of $G^-_8$ are also aspinorial, since
  here $\dyn^o(\varphi)=1$.

\begin{thm} Suppose $n>4$ and a multiple of $4$.  Then every orthogonal representation of $G^+_{2n}$ and $G^-_{2n}$ is spinorial.
\end{thm}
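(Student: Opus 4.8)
\emph{Reduction.} Since $G^{\pm}_{2n}$ is semisimple, Proposition~\ref{Samelson.book}, Proposition~\ref{l.vs.s(nu)} (which disposes of the $S(\sigma)$ summands) and Corollary~\ref{even.aspins} reduce us to showing that every irreducible orthogonal representation $\varphi_\la$ is spinorial; by \eqref{dn.form} this means $\frac{n}{4}\dyn^o(\varphi_\la)$ is even. Moreover the diagram automorphism of $D_n$ interchanges the two half-spin nodes, hence the subgroups $\langle c^+\rangle$ and $\langle -c^+\rangle$ of $Z(\Spin_{2n})$, so $G^+_{2n}\cong G^-_{2n}$ and it suffices to treat $G=G^+_{2n}$. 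If $n\equiv 0\bmod 8$ then $\frac{n}{4}$ is even, so $q_\varphi(\nu_0)=\frac{n}{4}\dyn^o(\varphi)$ is even for \emph{every} orthogonal representation and we are done by Corollary~\ref{Qnugen.even}; so assume $n\equiv 4\bmod 8$, whence $n\ge 12$ and $\frac{n}{4}$ is odd, and the task becomes: $\dyn^o(\varphi_\la)$ is even for every irreducible orthogonal $\varphi_\la$ of $G$.

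\emph{The root-lattice coset.} The dominant weights of $G^+_{2n}$ fall into two cosets of the root lattice: the root lattice itself, and the ``$S^+$'' half-spin coset. If $\la$ lies in the root lattice then $\varphi_\la$ descends to $\PSO_{2n}$, and since $\ord_2(p(\nu_0))=\ord_2\binom{n}{2}=\ord_2(n)-1=1=\ord_2(2n-2)$ when $n\equiv 4\bmod 8$, Corollary~\ref{ps.equal} together with Theorem~\ref{pso.all} shows $\varphi_\la$ is spinorial. (Equivalently one re-runs the computations \eqref{flicker1}, \eqref{flicker2} and the tensor-product argument of Proposition~\ref{s1.ad}, the extra factor $\frac{n}{4}$ being an odd integer and so changing no parities.)

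\emph{The half-spin coset.} For $\la$ in the $S^+$-coset I would argue by induction on $|\la|^2$, organized via a POSS obtained by adjoining half-spin generators to the set $S_1$ of Theorem~\ref{pso.all}, so that Proposition~\ref{pos.basis} reduces us to the generators. The crucial base case is $\la=\half\varpi_n$: from Table~2, $\dim V_{\half\varpi_n}=2^{n-1}$ and $\chi_{\half\varpi_n}(C)=\frac{n(2n-1)}{16(n-1)}$, so by Theorem~\ref{dynkin.thm}, $\dyn^o(\varphi_{\half\varpi_n})=2^{n-4}$, which is even precisely because $n>4$ (for $n=4$ it equals $1$, the aspinorial half-spin example already noted). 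A general dominant $\la$ in the $S^+$-coset can be written $\la=\mu+\nu$ in one of two ways: either $\mu$ is dominant in the root-lattice coset and $\nu=\half\varpi_n$, so that $\varphi_\mu$ and $\varphi_\nu$ are both spinorial representations of $G$ (by the root-lattice step and the base case) and $\varphi_\mu\otimes\varphi_\nu$ is spinorial by Corollary~\ref{simple.tensor}; or $\mu$ is the other half-spin fundamental weight and $\nu$ is a dominant vector-coset weight, in which case $\varphi_\mu,\varphi_\nu$ are representations of $\Spin_{2n}$ but $\varphi_\mu\otimes\varphi_\nu$ is a representation of $G$, and one computes $q_{\varphi_\mu\otimes\varphi_\nu}(\nu_0)$ directly from \eqref{internal.qs} and Proposition~\ref{this.cor} (valid for all $\nu\in\mf t$), obtaining an even value for $n\ge 12$ since each $\dyn^o$ that occurs carries a large power of $2$ or is even by the root-lattice analysis. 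In either case $\varphi_\mu\otimes\varphi_\nu$ is spinorial, and Lemma~\ref{induct.spin.here}(2) upgrades this to spinoriality of $\varphi_\la$, the other orthogonal constituents of $\varphi_\mu\otimes\varphi_\nu$ being of strictly smaller norm and spinorial by the inductive hypothesis.

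\emph{Main obstacle.} The only computation of real content is $\dyn^o(\varphi_{\half\varpi_n})=2^{n-4}$, drawn from Table~2 and Theorem~\ref{dynkin.thm}; its parity is exactly what pins down the hypothesis $n>4$ (i.e.\ excludes $D_4$). Everything else is bookkeeping — sorting the dominant weights of $G^{\pm}_{2n}$ into the two root-lattice cosets, exhibiting a provably spanning (POSS) set in the $S^+$-coset whose members decompose compatibly with Lemma~\ref{induct.spin.here}, and checking at each inductive step that the finitely many lower orthogonal constituents have already been handled.
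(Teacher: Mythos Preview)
Your overall strategy matches the paper's: split into $n\equiv 0,4\bmod 8$; dispose of $n\equiv 0\bmod 8$ trivially via \eqref{dn.form}; for $n\equiv 4\bmod 8$, handle the root-lattice coset by descent to $\PSO_{2n}$ (Corollary~\ref{ps.equal} and Theorem~\ref{pso.all}), and check the half-spin generator directly.  The paper computes $q_{\half\varpi_n}(\nu_0)=n\,2^{n-6}$, which is your $\tfrac{n}{4}\dyn^o=\tfrac{n}{4}\cdot 2^{n-4}$.  Your diagram-automorphism reduction to $G^+$ is a harmless shortcut the paper omits.

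Where you differ is instructive.  The paper simply asserts that $S^{\pm}=S_1\cup\{\half\varpi_{\pm}\}$ is a POSS for $G^{\pm}$ and invokes Proposition~\ref{pos.basis}.  You instead run the induction by hand and split into two cases for half-spin-coset $\la$, depending on whether $b_+$ (the $\half\varpi_n$-coefficient) is positive.  This is actually more careful than the paper: a dominant weight such as $\la=\varpi_1+\half\varpi_-$ lies in $X^*(T^+)$ (one checks $\la-\half\varpi_n=(1,0,\ldots,0,-1)\in Q$) but has $b_+=0$, so $\la-\half\varpi_n$ is not dominant and $\la$ is \emph{not} a nonnegative combination of $S^+$.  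Your ``second case'' is exactly the patch needed.

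That said, your second case is written loosely.  The phrase ``each $\dyn^o$ that occurs carries a large power of $2$ or is even by the root-lattice analysis'' is not quite right: the root-lattice analysis says nothing about vector-coset $\nu$, and $\dyn^o(\varphi_{\varpi_1})=1$ is odd.  The correct reason is the half-spin \emph{dimension}: from \eqref{internal.qs},
\[
q_{\varphi_{\half\varpi_-}\otimes\varphi_\nu}(\nu_0)=2^{n-1}\cdot\tfrac{n}{4}\dyn^o(\varphi_\nu)+\dim V_\nu\cdot n\,2^{n-6},
\]
and for $n\ge 12$ both $2^{n-1}$ and $2^{n-6}$ are even.  Also note that in this case $\varphi_{\half\varpi_-}$ and $\varphi_\nu$ are not representations of $G^+$, so Lemma~\ref{induct.spin.here} does not literally apply; but its \emph{proof} does, since the tensor product $\Phi$ is a representation of $G^+$ and decomposes there.
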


\begin{proof}
If $n$ is a multiple of $8$, then the conclusion follows from \eqref{dn.form}.

If $n \equiv 4 \mod 8$, then $\ord_2(p(\ul \nu))=1=\ord_2(p(\ul \nu'))$. Therefore we may apply Corollary \ref{ps.equal} to see that a representation of $G^{\pm}_{2n}$ which descends to $\PSO_{2n}$ is spinorial iff it was originally spinorial. Thus by Theorem \ref{pso.all}, all such representations of $G^{\pm}_{2n}$ are spinorial.

However there are   representations of $G^{\pm}_{2n}$ which don't descend, so we must enlarge our POSS.  Let $S^+=S_1 \cup \{\half \varpi_n\}$ and $S^-=S_1 \cup \{ \half \varpi_-\}$. Then $S^{\pm}$ is a POSS for $G^{\pm}$.  By \eqref{dn.form}, we have
 
\beq
q_{\half \varpi_n} =q_{\half \varpi_-}=n 2^{n-6}, 
\eeq
which is certainly even.  Thus for each $\la \in S^{\pm}$, the representation $V_\la$ of $G^{\pm}_{2n}$ is spinorial. The conclusion then follows by Proposition \ref{pos.basis}. \end{proof}

    \subsection{Summary for groups of type $D_n$}
  
  Let $n>2$ be a positive integer. From the above we know:
  
  \begin{itemize}
  \item The standard representation of $\SO_{2n}$ is aspinorial.
  \item If $n$ is a multiple of $4$, then every representation of $\PSO_{2n}$ is spinorial.
  \item If $n$ is odd, then the adjoint representation of $\PSO_{2n}$ is aspinorial.
  \item If $n \equiv 2 \mod 4$, then the  representations of $\PSO_{2n}$ and $G^{\pm}_{2n}$ on  $\wedge^2 V_0$  are aspinorial.
 \item The half-spin representation $\varphi_{\half \varpi_4}$ of $G^+_8$, and the half-spin representation $\varphi_{\half \varpi_-}$ of $G^-_8$ are aspinorial.
 \item For $n>4$ a multiple of $4$, all representations of $G^+_{2n}$ and $G^-_{2n}$ are spinorial.
  \end{itemize}
 
 \section{Summary for Simple $\mf g$} \label{summary.section}

Here is a list of all $G$ with simple $\mf g$, with the property that all orthogonal representations of $G$ are spinorial:

\begin{itemize}
\item All $G$ whose fundamental group has odd order
\item All $\SL_n/\mu_d$, when $n/d$ is even, except when $n$ is a power of $2$ and $d=n/2$
\item $\Sp_{2n}/{\pm 1}$, when $n$ is a multiple of $4$
\item The groups $\PSO_n$, when $n$ is a multiple of $8$
\item The groups $G^{\pm}_{2n}$, when $n>4$ is a multiple of $4$
\end{itemize}

   For the reader's convenience, we recall  the $G$ whose fundamental groups have odd order:
   
   \begin{itemize}
   \item Simply connected $G$
   \item $\SL_n/\mu_d$ with $d$ odd
\item The adjoint group of type $E_6$  
\end{itemize}

 Aspinorial representations for most groups not on this list have already been mentioned. To finish, we remark that the standard representation of an odd orthogonal group is aspinorial, and the adjoint representation of the adjoint group of type $E_7$ is aspinorial.

\section{Periodicity}   \label{s10}

For the irreducible orthogonal  representations $\varphi_\la$, our lifting criterion amounts to determining the parity of one or more $q_\la(\nu)$, each an integer-valued polynomial function of $\la$.  As we explain in this section, this entails a certain periodicity of the spinorial highest weights in the character lattice. 

\subsection{Polynomials with Integer Values}

Let $V$ be a finite-dimensional rational vector space, $V^*$ its dual, $L$ a lattice in $V$, and $L^\vee$ the dual lattice in $V^*$.  Recall that $L^\vee$ is the $\Z$-module of $\Q$-linear maps $f: V \to \Q$ so that $f(L) \subseteq \Z$.
Denote by $\dbinom{L^\vee}{\Z}$ the $\Z$-algebra of polynomial functions on $V$ which take integer values on $L$.  Given $f \in L^\vee$, and $n \in \N$, define $\dbinom{f}{n}  \in \dbinom{L^\vee}{\Z}$ by the prescription
\beq
\binom{f}{n}: x \mapsto \binom{f(x)}{n}=\frac{f(x)(f(x)-1) \cdots (f(x)-n+1)}{n!}
\eeq
for $x \in L$.   

\begin{prop} \label{bourb.lattice} The $\Z$-algebra $\dbinom{L^\vee}{\Z}$ is generated by the $\dbinom{f}{n}$ for $f \in L^\vee$ and $n \in \N$.  If $\{f_1, \ldots, f_r\}$ is a $\Z$-basis of $L^\vee$, then the products
\beq
\binom{f_1}{n_1} \cdots \binom{f_r}{n_r},
\eeq
where $n_1, \ldots, n_r \in \N$, form a basis of the $\Z$-module $\dbinom{L^\vee}{\Z}$.
\end{prop}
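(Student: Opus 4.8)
The plan is to reduce to the standard lattice and then run the classical finite-difference calculus. Fix a $\Z$-basis $\{f_1,\dots,f_r\}$ of $L^\vee$; its dual basis $\{e_1,\dots,e_r\}$, characterized by $f_i(e_j)=\delta_{ij}$, is a $\Z$-basis of $L$ by standard lattice duality ($L=(L^\vee)^\vee$). Using $f_1,\dots,f_r$ as linear coordinates identifies $(V,L)$ with $(\Q^r,\Z^r)$, under which $\binom{L^\vee}{\Z}$ becomes the ring $\mathrm{Int}(\Z^r)$ of polynomials in $x_1,\dots,x_r$ taking integer values on $\Z^r$, and $\binom{f_i}{n}$ becomes the usual binomial polynomial $\binom{x_i}{n}$. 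So it suffices to prove that the products $\binom{x_1}{n_1}\cdots\binom{x_r}{n_r}$, with $(n_1,\dots,n_r)\in\N^r$, form a $\Z$-basis of $\mathrm{Int}(\Z^r)$; the generation statement (1) then follows a fortiori, since each $\binom{x_i}{n}$ is among the $\binom{f}{n}$.

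For each $i$ let $\Delta_i$ be the partial difference operator $(\Delta_i p)(x)=p(x+e_i)-p(x)$. Two elementary facts drive the argument: (a) $\Delta_i$ carries $\mathrm{Int}(\Z^r)$ into itself, since on an integer point it returns a difference of two integers; and (b) $\Delta_i\binom{x_i}{n}=\binom{x_i}{n-1}$ (Pascal's rule, with the convention $\binom{x}{m}=0$ for $m<0$), so that $\Delta_i$ acts on a product of binomials in distinct variables by lowering only the $i$-th index. Because $\binom{x_j}{n_j}$ has degree $n_j$, the products $\prod_j\binom{x_j}{n_j}$ form a $\Q$-basis of $\Q[x_1,\dots,x_r]$; hence any $p\in\mathrm{Int}(\Z^r)$ has a unique expansion $p=\sum_{\vec n}c_{\vec n}\prod_j\binom{x_j}{n_j}$ with $c_{\vec n}\in\Q$.

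The crux is coefficient extraction. Applying $\Delta_1^{m_1}\cdots\Delta_r^{m_r}$ to this expansion and evaluating at the origin kills every term with $\vec n\neq\vec m$, since $\binom{0}{k}$ equals $1$ for $k=0$ and $0$ otherwise; thus $c_{\vec m}=(\Delta_1^{m_1}\cdots\Delta_r^{m_r}p)(0)$, which by (a) is the value of an integer-valued polynomial at an integer point, hence lies in $\Z$. Therefore the products span $\mathrm{Int}(\Z^r)$ over $\Z$, and being $\Q$-linearly independent they are a fortiori $\Z$-linearly independent, so they form a $\Z$-basis; this is (2), and (1) follows. I do not expect a serious obstacle here; the only point demanding care is the multivariable bookkeeping in fact (b) --- making sure $\Delta_i$ touches only the $i$-th factor of a product of binomials in separate variables, so that the iterated difference genuinely isolates a single coefficient --- together with the sign and support conventions for $\binom{x}{m}$ with $m$ negative. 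The one-variable theorem of P\'olya is the case $r=1$ of exactly this argument, so no separate input is needed.
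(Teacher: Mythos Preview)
Your argument is correct and is the standard finite-difference proof of this classical fact. The paper, however, does not prove the proposition at all: it simply cites Bourbaki (\cite{Bou.Lie.7-9}, Chapter VIII, \S12, no.\ 4, Proposition 2), so your write-up is strictly more complete than what appears in the paper, and indeed is essentially the argument Bourbaki gives.
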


\begin{proof} See Proposition 2 in  \cite{Bou.Lie.7-9}, Chapter 8, Section 12, no. 4.
\end{proof} 

Given a basis of $V$, we can form the set $C$ of its nonnegative linear combinations.  Call $C$ a ``full polyhedral cone" if it arises in this way, and write
 $L^+=L \cap C$.
\begin{prop} \label{rohitsproof} Suppose $f$ is a polynomial map from $V$ to $\Q$ that take integer values on $L^+$.  Then $f \in \binom{L^\vee}{\Z}$.
\end{prop}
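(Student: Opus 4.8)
The plan is to reduce to the one–variable case: given an arbitrary $x \in L$, I will translate it along a lattice vector lying deep inside $C$, so that all translates land in $L^+$, where $f$ is known to be integral, and then descend back to $x$. Concretely, fix a basis $v_1,\dots,v_r$ of $V$ with $C=\{\sum_i a_i v_i \mid a_i\ge 0\}$, and choose $N\in\N$ with $Nv_i\in L$ for every $i$. Put $z:=N(v_1+\cdots+v_r)\in L$; its $v_i$-coordinates are all equal to the positive number $N$. Thus for any $x\in L$, writing $x=\sum_i b_i v_i$, the $v_i$-coordinate of $x+Mz$ is $b_i+MN$, which is positive as soon as $M>\max_i(-b_i/N)$. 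Hence $x+Mz\in L\cap C=L^+$ for all sufficiently large $M\in\N$, so $f(x+Mz)\in\Z$ for all such $M$.

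Now introduce the single-variable polynomial $g(t)=f(x+tz)\in\Q[t]$. By the previous step, $g$ takes integer values at all sufficiently large integers $M\ge M_0$. I then apply the classical fact that a polynomial in $\Q[t]$ which is integer-valued on $\{M_0,M_0+1,M_0+2,\dots\}$ is integer-valued on all of $\Z$. This follows by induction on $\deg g$ using the difference operator $(\Delta g)(t)=g(t+1)-g(t)$: the polynomial $\Delta g$ has strictly smaller degree and is integer-valued on the same half-line, so by the inductive hypothesis it is integer-valued on all of $\Z$; combining $g(M_0)\in\Z$ with the recursion $g(m)=g(m+1)-(\Delta g)(m)$ then propagates integrality downward to every $m\le M_0$, while $m>M_0$ is already known. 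The base case $\deg g=0$ is immediate, $g$ being the constant $g(M_0)\in\Z$. (Alternatively one could invoke Proposition~\ref{bourb.lattice} in rank one to identify $g$ with an element of $\binom{\Z^\vee}{\Z}$, but the direct induction is shorter.)

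Specializing to $t=0$ gives $f(x)=g(0)\in\Z$. Since $x\in L$ was arbitrary, $f$ takes integer values on all of $L$, i.e. $f\in\binom{L^\vee}{\Z}$. I do not anticipate a genuine obstacle here; the only points requiring a little care are the choice of the translation vector $z$, which must have all $v_i$-coordinates strictly positive so that the ray $x+Mz$ genuinely enters $C$ while remaining in $L$, and getting the direction of the induction correct in the one-variable lemma (integrality is propagated \emph{downward} from $M_0$, using that $\Delta g$ is already known to be globally integer-valued).
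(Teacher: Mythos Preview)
Your argument is correct and follows essentially the same strategy as the paper: find a lattice vector pointing into the cone, translate the given point along it into $L^+$, and reduce to the one-variable fact that a rational polynomial integer-valued on a half-line of integers is integer-valued on all of $\Z$.

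The only difference is in the construction of the translation vector. The paper records a geometric lemma (left to the thesis) that for each $p\in L$ there exists $v\in L$ with $p+nv\in L^+$ for \emph{all} positive integers $n$; this $v$ depends on $p$. You instead choose a single $z=N(v_1+\cdots+v_r)\in L$ that works uniformly for every $x$, at the price of landing in $L^+$ only for $M$ sufficiently large. Your choice is more explicit and avoids the auxiliary lemma entirely, while the paper's version puts you in $L^+$ from $n=1$ onward; either way the one-variable reduction goes through, and your finite-difference argument for that step is perfectly fine.
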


\begin{proof}

We omit the elementary proof (see \cite{Rohit.thesis}) of  the following lemma:

\begin{lemma} Suppose that $V$ is a finite-dimensional rational vector space, that $C$ is a full polyhedral cone in $V$, and that $L \subset V$ is a lattice.  Let $p \in L$.  Then 
\begin{enumerate}
\item $C \cap (p+C)$ is a translation of $C$.
\item The intersection $L \cap C \cap (p+C)$ is nonempty.
\item Suppose $p'$ is in the above intersection, and write $v=p'-p$.  Then $p+nv \in L \cap C \cap (p+C)$ for all positive integers $n$.
\end{enumerate}
\end{lemma}

Continuing with the proof of the proposition, let $\ell \in L$; we must show that $f(\ell) \in \Z$.  By the lemma there is a $v \in L$ so that $\ell+nv \in L^+$ for all positive integers $n$.
For $x \in \Z$, put $g(x)=f(\ell+x v)$.  Then $g \in \Q[x]$, and by hypothesis it takes integer values on positive integers.  It is elementary to see that such a polynomial 
takes integer values at {\it all} integers, and in particular $g(0)=f(\ell) \in \Z$. \end{proof}

\begin{lemma} \label{binom.period} Fix an integer $n \geq 1$ and put $k=\left[ \log_2n \right]+1$.  Then $\dbinom{a+2^k}{n} \equiv \dbinom{a}{n} \mod 2$ for every integer $a \geq 1$.
\end{lemma}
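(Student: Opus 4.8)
The plan is to reduce the claim to the ``freshman's dream'' identity modulo $2$ in the polynomial ring $\Z[x]$, together with the single arithmetic fact that the choice $k = \left[\log_2 n\right]+1$ forces $n < 2^k$.

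First I would record the meaning of $k$: it gives $2^{k-1} \le n \le 2^k - 1$, and the only consequence needed below is that $n < 2^k$. Next I would establish the congruence $(1+x)^{2^k} \equiv 1 + x^{2^k} \pmod 2$ in $\Z[x]$ by induction on $k$. The case $k=0$ is trivial, and assuming it for $k$, squaring yields
\beq
(1+x)^{2^{k+1}} = \left((1+x)^{2^k}\right)^2 \equiv \left(1 + x^{2^k}\right)^2 = 1 + 2x^{2^k} + x^{2^{k+1}} \equiv 1 + x^{2^{k+1}} \pmod 2.
\eeq

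Then, for a fixed integer $a \ge 1$, I would multiply this congruence by $(1+x)^a$ to get
\beq
(1+x)^{a+2^k} = (1+x)^a (1+x)^{2^k} \equiv (1+x)^a\left(1 + x^{2^k}\right) \pmod 2
\eeq
as elements of $\Z[x]$, and then compare the coefficient of $x^n$ on the two sides. The left side contributes $\binom{a+2^k}{n}$, while the right side contributes $\binom{a}{n} + \binom{a}{n-2^k}$. Since $n < 2^k$, the index $n - 2^k$ is negative, so $\binom{a}{n-2^k} = 0$, and one concludes $\binom{a+2^k}{n} \equiv \binom{a}{n} \pmod 2$, as desired.

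There is no serious obstacle here: the argument is entirely formal once one notices that $k$ is calibrated precisely so that $n < 2^k$, and this is sharp --- with a smaller exponent the statement already fails, e.g.\ $\binom{5}{2}=10$ is even while $\binom{3}{2}=3$ is odd. The only point requiring a little care is that the coefficient extraction is legitimate because we are comparing genuine polynomials with integer coefficients. Alternatively, the same conclusion follows from Lucas' theorem mod $2$, since the binary digits of $n$ all lie in positions $<k$ and adding $2^k$ to $a$ does not alter those positions of $a$.
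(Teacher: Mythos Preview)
Your argument is correct. The paper's own proof is a single sentence: it just invokes the Lucas congruence. Your generating-function computation is essentially the standard proof of the $p=2$ case of Lucas' theorem, so the two approaches are the same mathematics packaged differently --- you give a self-contained derivation (and then note the Lucas shortcut at the end), whereas the paper simply cites the result. Either way the key point is exactly the one you isolate: $k$ is chosen so that $n<2^k$, hence adding $2^k$ to $a$ leaves the first $k$ binary digits of $a$ untouched.
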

 
 \begin{proof} This follows from the Lucas Congruence (see e.g.,\cite{ec1}).
 \end{proof}
 
 \begin{prop} \label{2.power.period} Let $f \in \dbinom{L^\vee}{\Z}$.  Then there is a $k \in \N$ so that for all $x,y \in L$ we have
 \beq
 f(x + 2^ky) \equiv f(x) \mod 2.
 \eeq
 \end{prop}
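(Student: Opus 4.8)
The plan is to reduce the statement to the one-variable case via the basis description of $\binom{L^\vee}{\Z}$ given in Proposition \ref{bourb.lattice}, and then invoke the periodicity of binomial coefficients modulo $2$ from Lemma \ref{binom.period}. First I would fix a $\Z$-basis $\{f_1,\ldots,f_r\}$ of $L^\vee$. By Proposition \ref{bourb.lattice}, the element $f$ can be written as a finite $\Z$-linear combination of products $\binom{f_1}{n_1}\cdots\binom{f_r}{n_r}$. Let $n$ be the maximum exponent $n_i$ that appears in any of these products, and set $k=\lfloor \log_2 n\rfloor+1$. I claim this $k$ works.

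The key step is to show that each basis product $P(x)=\binom{f_1(x)}{n_1}\cdots\binom{f_r(x)}{n_r}$ satisfies $P(x+2^k y)\equiv P(x)\bmod 2$ for all $x,y\in L$. Write $a_i=f_i(x)$ and $b_i=f_i(y)$, which are integers since $f_i\in L^\vee$; then $f_i(x+2^k y)=a_i+2^k b_i$. For a single factor, I want $\binom{a_i+2^k b_i}{n_i}\equiv\binom{a_i}{n_i}\bmod 2$. Lemma \ref{binom.period} gives exactly $\binom{a+2^k}{n}\equiv\binom{a}{n}\bmod 2$ for the specific $k=[\log_2 n]+1$ and $a\ge 1$; iterating the shift by $2^k$ a total of $b_i$ times (and noting $k\ge[\log_2 n_i]+1$ since $n\ge n_i$, together with the obvious adjustment when $b_i<0$ or when the arguments are $\le 0$, handled by the polynomial identity underlying Lucas rather than the inequality $a\ge 1$) yields $\binom{a_i+2^k b_i}{n_i}\equiv\binom{a_i}{n_i}\bmod 2$. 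Multiplying over $i=1,\ldots,r$ gives $P(x+2^k y)\equiv P(x)\bmod 2$. Summing the resulting congruences over the basis products appearing in $f$, weighted by the (integer) coefficients, gives $f(x+2^k y)\equiv f(x)\bmod 2$, as desired.

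The main obstacle I anticipate is purely bookkeeping: Lemma \ref{binom.period} is stated only for a shift by $+2^k$ and for arguments $a\ge 1$, whereas here I need a shift by an arbitrary integer multiple $2^k b_i$ (possibly negative) and must allow $a_i\le 0$. The cleanest fix is to observe that $\binom{X+2^k}{n}-\binom{X}{n}$, viewed as a polynomial in $X$ with rational coefficients, has all coefficients of $2$-adic valuation $\ge 1$ once $k=[\log_2 n]+1$ — this is the polynomial strengthening of the Lucas congruence — so the congruence holds at every integer, and iteration in either direction is immediate. With that in hand the rest is a routine finite sum. I would also remark explicitly that the value of $k$ depends only on the degree data of $f$, not on $x$ or $y$, which is what the statement requires.
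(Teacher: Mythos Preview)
Your proposal is correct and follows essentially the same route as the paper: use Proposition~\ref{bourb.lattice} to express $f$ in terms of finitely many $\binom{f_i}{n_i}$, apply Lemma~\ref{binom.period} to each factor, and take $k$ to be the maximum of the individual $k_i$. The only cosmetic difference is that the paper uses the algebra-generator statement (writing $f=g\bigl(\binom{f_1}{n_1},\ldots,\binom{f_r}{n_r}\bigr)$ for some $g\in\Z[x_1,\ldots,x_r]$) rather than the module-basis statement, but this changes nothing. You are in fact more careful than the paper about the bookkeeping issue you flag---the restriction $a\ge 1$ and the shift by a general multiple $2^k b_i$---which the paper simply glosses over; your proposed fix via the polynomial form of Lucas is the right way to handle it.
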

 
 \begin{proof} By Proposition \ref{bourb.lattice}, there are $f_1, \ldots, f_r \in L^\vee$, integers $n_1, \ldots, n_r$, and a polynomial $g \in \Z[x_1, \ldots, x_r]$ so that
 \beq
 f=g \left(\binom{f_1}{n_1}, \ldots, \binom{f_r}{n_r} \right).
 \eeq
Let $k_i=\left[ \log_2n_i \right]+1$;  by Lemma \ref{binom.period} we have
 \beq
\binom{f_i}{n_i}(x+2^{k_i}y) \equiv \binom{f_i}{n_i}(x) \mod 2
\eeq
for all $x,y \in L$.  If we put     $k=\max(k_1, \ldots, k_r)$ we obtain the proposition.
 \end{proof}

\subsection{Example:  Parity of Dimensions}
 
To illustrate the above, let $G$ be connected reductive with notation as before.  Take $L=X^*(T) \subset V=X^*(T) \otimes \Q \hookrightarrow \mf t^*$.
Define $f: \mf t^* \to F$ by
\beq
f(\la)=\frac{d_{\la+\delta}}{d_\delta}=\dim V_\la.
\eeq
From Propositions \ref{rohitsproof} and \ref{2.power.period}  we deduce:
\begin{cor} \label{earlier...} With notation as above:
\begin{enumerate}
\item $f(\la) \in \Z$ for all $\la \in X^*(T)$; equivalently $f \in \dbinom{X_*(T)}{\Z}$.
\item There is a $k \in \N$ so that $f(\la_0+2^k \la) \equiv f(\la_0) \mod 2$ for all $\la_0,\la \in X^*(T)$.
\end{enumerate}
\end{cor}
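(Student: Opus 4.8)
The plan is to obtain both parts as immediate consequences of Propositions \ref{rohitsproof} and \ref{2.power.period}, once the mild combinatorial hypotheses are checked.

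First I would record that $f$ is a polynomial function on $V=X^*(T)\otimes\Q$ with rational coefficients. This is clear from the Weyl dimension formula: $f(\la)=d_{\la+\delta}/d_\delta$, where $\la\mapsto d_{\la+\delta}=\prod_{\alpha\in R^+}\lip\la+\delta,\alpha^\vee\rip$ is a polynomial of degree $N=|R^+|$ with rational coefficients, and $d_\delta$ is a nonzero rational constant. Moreover $f$ takes the positive integer values $\dim V_\la$ on the dominant weights $\la\in X^*(T)^+$.

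Next I would exhibit a full polyhedral cone $C\subseteq V$ on which $f$ is integer-valued on lattice points, so as to apply Proposition \ref{rohitsproof}. When $G$ is semisimple one may simply take $C$ to be the dominant Weyl chamber, which is the set of nonnegative combinations of the fundamental weights, a $\Z$-basis of $V$; then $X^*(T)\cap C=X^*(T)^+$, on which $f$ is integer-valued. When $G$ is only reductive the dominant cone $\{x\in V\mid \lip x,\alpha^\vee\rip\ge 0 \text{ for all }\alpha\in R^+\}$ is still full-dimensional (it contains an open neighbourhood of $\delta$), so I would choose $\dim V$ linearly independent vectors in its interior and let $C$ be their nonnegative span; since $X^*(T)\cap C$ then consists of dominant weights, $f$ is integer-valued there. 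Applying Proposition \ref{rohitsproof} with $L=X^*(T)$, and noting that the perfect pairing $X^*(T)\times X_*(T)\to\Z$ identifies $L^\vee$ with $X_*(T)$, yields $f\in\dbinom{X_*(T)}{\Z}$, which is exactly the statement of part (1). Part (2) is then immediate from Proposition \ref{2.power.period} applied to this $f$, taking $x=\la_0$ and $y=\la$.

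Essentially all the work is already done in the earlier propositions, so there is no serious obstacle; the only point that needs a moment's care is the passage from semisimple to reductive $G$, where the dominant cone is not literally the nonnegative span of a basis of $V$ and one must instead fit a genuine full polyhedral cone inside it. Everything else is a direct citation.
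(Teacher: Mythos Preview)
Your proposal is correct and follows the same approach as the paper, which simply states that the corollary follows from Propositions \ref{rohitsproof} and \ref{2.power.period}. You have filled in the verification that $f$ is a rational polynomial and that a full polyhedral cone of dominant weights exists, including the mild extra care needed in the reductive (non-semisimple) case; one minor slip is calling the fundamental weights ``a $\Z$-basis of $V$'' when you mean a $\Q$-basis of $V$, but this does not affect the argument since Proposition \ref{rohitsproof} only requires a basis of the rational vector space.
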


\subsection{Proof of Theorem \ref{period.intro}}  
 
We continue with $G$ connected reductive.

If $\mf g$ is simple put
\beq
\eta_{\ul \nu}(\la)=p(\ul \nu) \cdot \frac{\dim V_\la \cdot \chi_\la(C)}{\dim \mf g}.
\eeq
 Then:
\begin{enumerate}
\item $\eta_{\ul \nu}$ is a polynomial in $\la$,
\item $\eta_{\ul \nu}(\la)\in \Z$ for $\la \in X_{\sd}^+$, and
\item $\varphi_\la$ is spinorial iff $\eta_{\ul \nu}(\la)$ is even.
\end{enumerate}

If $\mf g$ is not necessarily simple, we may instead put 
\beq 
\eta_{\ul \nu}(\la) = 1+ \prod_{\nu \in \ul \nu}(q_\la(\nu)-1),
\eeq   
  and the same three properties hold.  From Propositions \ref{rohitsproof} and \ref{2.power.period}  we deduce:
  
\begin{cor} With notation as above,
\begin{enumerate}
\item $\eta_{\ul \nu}(\la) \in \Z$ for all $\la \in X_{\orth}$; equivalently $\eta_{\ul \nu} \in \dbinom{X_{\orth}^\vee}{\Z}$.
\item There is a $k \in \N$ so that $\eta_{\ul \nu}(\la_0+2^k \la) \equiv \eta_{\ul \nu}(\la_0) \mod 2$ for all $\la_0,\la \in X_{\orth}$.
\end{enumerate}
\end{cor}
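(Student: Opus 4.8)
The claim is deduced from Propositions \ref{rohitsproof} and \ref{2.power.period}, so the plan has three parts: (i) record that $\eta_{\ul \nu}$ enjoys the three properties listed above; (ii) exhibit a full polyhedral cone $C$ in the rational span of $X_{\orth}$ with $X_{\orth}\cap C\subseteq X_{\orth}^+$; and (iii) apply those two propositions.

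For (i): that $\eta_{\ul \nu}$ is a polynomial function on $V:=X_{\orth}\otimes\Q$ and that $\varphi_\la$ is spinorial iff $\eta_{\ul \nu}(\la)$ is even are exactly the bulleted properties above. When $\mf g$ is simple this is Theorem \ref{mthm}, using that $\dim V_\la=d_{\la+\delta}/d_\delta$ is polynomial in $\la$ by the Weyl dimension formula and $\chi_\la(C)=(\la,\la+2\delta)$ is quadratic. When $\mf g$ is general one uses the closed form of $q_\la(\nu)$ from Proposition \ref{ss.Qnu} — valid on $V$ because $w_0\la=-\la$ forces the $\mf z$-component of any $\la\in X_{\sd}$ to vanish — together with Corollary \ref{Qnugen.even} and the observation that $1+\prod_{\nu\in\ul \nu}(q_\la(\nu)-1)$ is even iff every factor $q_\la(\nu)-1$ is odd, i.e.\ iff every $q_\la(\nu)$ is even. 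In particular, for $\la\in X_{\orth}^+$ the representation $\varphi_\la$ is self-dual, so every $q_\la(\nu)$ is an integer (Section \ref{s4}) and hence $\eta_{\ul \nu}(\la)\in\Z$.

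For (ii): set $L=X_{\orth}$. The weight $2\delta=\sum_{\alpha\in R^+}\alpha$ lies in $X_{\orth}^+$ (it is self-dual and $\lip 2\delta,2\delta^\vee\rip$ is even) and is regular; since the regular dominant weights form a nonempty open subset of $V$, the cone $X^*(T)^+\cap V$ is full-dimensional in $V$. Hence we may pick a $\Q$-basis $b_1,\dots,b_m$ of $V$ lying in $X_{\orth}^+$ — choose rational regular dominant points of $V$ that span $V$ and rescale each by a positive integer so that it lands in $L$ — and let $C$ be the full polyhedral cone generated by the $b_i$. Every element of $C$ is a nonnegative combination of dominant weights and hence dominant, so $L^+=L\cap C\subseteq X_{\orth}^+$, and by (i) the polynomial $\eta_{\ul \nu}$ takes integer values on $L^+$.

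For (iii): Proposition \ref{rohitsproof} now gives $\eta_{\ul \nu}\in\dbinom{X_{\orth}^\vee}{\Z}$, which is part (1); and Proposition \ref{2.power.period} applied to $\eta_{\ul \nu}$ furnishes a $k\in\N$ with $\eta_{\ul \nu}(\la_0+2^k\la)\equiv\eta_{\ul \nu}(\la_0)\mod 2$ for all $\la_0,\la\in X_{\orth}$, which is part (2). Combined with property (i) and the fact that $\la_0+2^k\la\in X_{\orth}^+$ whenever $\la_0,\la\in X_{\orth}^+$, this yields Theorem \ref{period.intro}. I expect the only step that needs genuine care to be (ii): one must be certain that the dominant orthogonal weights span a full-dimensional cone inside their own rational span, rather than lying in a proper subspace, and this is precisely what the regularity of $2\delta\in X_{\orth}$ secures; the remainder is routine bookkeeping with the formulas already established.
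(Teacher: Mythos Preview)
Your argument is correct and follows the same route as the paper, which simply invokes Propositions \ref{rohitsproof} and \ref{2.power.period} after recording the three properties of $\eta_{\ul\nu}$. You have filled in the only nontrivial omitted step, namely the construction of a full polyhedral cone inside $X_{\orth}\otimes\Q$ contained in the dominant chamber, and your use of the regularity of $2\delta\in X_{\orth}$ to secure this is exactly what is needed.
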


Theorem \ref{period.intro} in the introduction follows from this.  If we put $L^+=2^k X_{\orth}^+$, then the theorem says that the set of spinorial highest weights is stable under  addition from $L^+$.  
Since the index $[X_{\orth}: 2^k X_{\orth}]$ is finite, the determination of the full set of spinorial weights amounts to a finite computation.

The problem of finding the exact largest lattice $L \subseteq X_{\orth}$ so that the spinorialities of $\varphi_{\la_0}$ and $\varphi_{\la_0+ \ell}$ agree for all $\la \in X_{\orth}^+$ and $\ell \in L^+$ seems interesting, as does the problem of determining the proportion of spinorial irreducible representations.  We do not settle these questions here, but see the next section for $\PGL_2$ and $\SO_4$, and \cite{Rohit.thesis} for more examples. 
   
 \subsection{Examples} 
 
  Let us examine $G=\PGL_2$ more closely.  We have $X^*(T)=X_{\sd}=X_{\orth}$.   For integers $j \geq 0$ define $\lambda_j \in X^*(T)$ by
\beq
\lambda_j\left( \mat a{} {} b \right)=(ab^{-1})^j.
\eeq
Then $\dim V_{\la_j}=2j+1$ and $\chi_{\la_j}(C)=\half (j^2+j)$, so $\varphi_{\lambda_j}$ is spinorial iff 
\beq
\frac{j(j+1)(2j+1)}{2}
\eeq
is even.  Equivalently, $j \equiv 0,3 \mod 4$.  We may therefore take $k=2$ in Theorem \ref{period.intro}.

\bigskip 

As a second example, recall the representations $V_{a,b}$ of $\SO_4$ from Example \ref{so.4.example}.
If we put 
 \beq
 F(a,b)=\frac{1}{4} \left(  (b+1) \binom{a+2}{3}+(a+1) \binom{b+2}{3} \right),
 \eeq
then $V_{a,b}$ is spinorial iff $F(a,b)$ is even.   It is elementary to see that $F(a+8i,b+8j) \equiv F(a,b) \mod 2$ for integers $i,j$.  In particular we may take $k=3$ in Theorem   \ref{period.intro}.

\section{Reduction to Algebraically Closed Fields} \label{reduction.closed}

For this section, $G$ is a connected reductive group defined over a field $F$ of characteristic $0$, not necessarily algebraically closed.  
Let $V$ be a quadratic vector space over $F$, and $\varphi: G \to \SO(V)$ a morphism defined over $F$.  The isogeny $\rho: \Spin(V) \to \SO(V)$ is also defined over $F$.  By extending scalars to the algebraic closure $\ol F$ of $F$, we may use the rest of this paper to determine whether there exists a lift 
$\hat \varphi: G \to \Spin(V)$ of $\varphi$ defined over $\ol F$.  

\begin{lemma}   If $\hat \varphi: G \to \Spin(V)$ is a lift defined over $\ol F$, then it arises from a lift defined over $F$.
\end{lemma}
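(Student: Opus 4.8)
The plan is to exploit the uniqueness clause of Proposition \ref{lift.proposition} together with Galois descent, exactly as indicated in the Remark following that proposition. Write $\Gamma = \Gal(\ol F/F)$ for the absolute Galois group; since $\mathrm{char}\, F = 0$ we have $\ol F^{\Gamma} = F$, and morphisms between finite-type $F$-schemes satisfy Galois descent: a morphism $G_{\ol F} \to \Spin(V)_{\ol F}$ that is defined over $\ol F$ descends to $F$ precisely when it is fixed by the natural $\Gamma$-action on $\Hom_{\ol F}(G_{\ol F}, \Spin(V)_{\ol F})$.

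First I would fix $\sigma \in \Gamma$ and form the conjugate morphism $\hat\varphi^{\sigma} : G \to \Spin(V)$ obtained by applying $\sigma$ to coefficients (equivalently $\hat\varphi^{\sigma} = \sigma \circ \hat\varphi \circ \sigma^{-1}$ on $\ol F$-points). Because $\rho$ and $\varphi$ are already defined over $F$, they are $\Gamma$-invariant, so applying $\sigma$ to the identity $\rho \circ \hat\varphi = \varphi$ yields $\rho \circ \hat\varphi^{\sigma} = \varphi$. Hence $\hat\varphi^{\sigma}$ is again a lift of $\varphi$ along the central isogeny $\rho$.

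Next I would invoke the final assertion of Proposition \ref{lift.proposition}: a lift of $\varphi$ along $\rho$ is unique whenever it exists. Therefore $\hat\varphi^{\sigma} = \hat\varphi$, and since $\sigma \in \Gamma$ was arbitrary, $\hat\varphi$ is $\Gamma$-fixed. By Galois descent it is then defined over $F$, and it is a lift of $\varphi$ over $F$, as required.

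The only step that needs any care is the descent statement itself — that a $\Gamma$-invariant $\ol F$-morphism of $F$-group schemes is genuinely defined over $F$. This is, however, the morphism case of faithfully flat descent along $\ol F / F$ (using that $\ol F/F$ is a filtered union of finite Galois extensions), so no real obstacle arises; the entire content of the lemma is the uniqueness of the lift, which is already in hand.
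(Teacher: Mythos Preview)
Your proof is correct and follows essentially the same line as the paper's: both show that every Galois conjugate $\hat\varphi^{\sigma}$ is again a lift of $\varphi$ and then conclude $\hat\varphi^{\sigma}=\hat\varphi$, hence $\hat\varphi$ descends to $F$. The only difference is cosmetic: you invoke the uniqueness clause of Proposition~\ref{lift.proposition} directly (as the Remark after that proposition already suggests), whereas the paper reproves that uniqueness inline via the ``connected domain, discrete target'' argument applied to $x \mapsto \hat\varphi(x)^{-1}\cdot{}^{\sigma^{-1}}\hat\varphi({}^{\sigma}x)$.
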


\begin{proof} 
The Galois group acts by Zariski-continuous automorphisms on the $\ol F$-points of $G$ and $\Spin(V)$.   
We must show that for every $\sigma \in \Gal(F)$ and $x \in G(\ol F)$, we have ${}^\sigma \hat \varphi(x)=\hat \varphi({}^\sigma x)$.
Since $\rho$ and $\varphi$ are defined over $F$, the identity $\rho(\hat \varphi(x))=\varphi(x)$ implies that
\beq
\rho(\hat \varphi(x)^{-1} \cdot {}^{\sigma^{-1}} \hat \varphi({}^\sigma x))=1.
\eeq
Thus the argument of $\rho$ above gives a Zariski-continuous map $G(\ol F) \to \ker \rho$.  Since $G$ is connected and $\ker \rho$ is discrete, it must be that $\hat \varphi(x)= {}^{\sigma^{-1}} \hat \varphi({}^\sigma x)$, and the lemma follows. \end{proof}

 Therefore:  The $F$-representation $\varphi$ is spinorial iff its extension to ${\ol F}$-points is spinorial.  

 \bibliographystyle{alpha}
\bibliography{refs}

\begin{thebibliography}{Sam90}

\bibitem[Ada69]{Adams}
J.~F. Adams.
\newblock {\em Lectures on {L}ie groups}.
\newblock W. A. Benjamin, Inc., New York-Amsterdam, 1969.

\bibitem[Bou02]{Bou.Lie.4-6}
N.~Bourbaki.
\newblock {\em Lie groups and {L}ie algebras. {C}hapters 4--6}.
\newblock Elements of Mathematics (Berlin). Springer-Verlag, Berlin, 2002.
\newblock Translated from the 1968 French original by Andrew Pressley.

\bibitem[Bou05]{Bou.Lie.7-9}
N.~Bourbaki.
\newblock {\em Lie groups and {L}ie algebras. {C}hapters 7--9}.
\newblock Elements of Mathematics (Berlin). Springer-Verlag, Berlin, 2005.
\newblock Translated from the 1975 and 1982 French originals by Andrew
  Pressley.

\bibitem[Dyn52]{Dynkin.Russian}
E.~B. Dynkin.
\newblock Semisimple subalgebras of semisimple {L}ie algebras.
\newblock {\em Mat. Sbornik N.S.}, 30(72):349--462 (3 plates), 1952.

\bibitem[Dyn00]{Dynkin.Selected}
E.~B. Dynkin.
\newblock {\em Selected papers of {E}. {B}. {D}ynkin with commentary}.
\newblock American Mathematical Society, Providence, RI; International Press,
  Cambridge, MA, 2000.
\newblock Edited by A. A. Yushkevich, G. M. Seitz and A. L. Onishchik.

\bibitem[FH91]{Fulton.Harris}
W.~Fulton and J.~Harris.
\newblock {\em Representation theory}, volume 129 of {\em Graduate Texts in
  Mathematics}.
\newblock Springer-Verlag, New York, 1991.
\newblock A first course, Readings in Mathematics.

\bibitem[GW09]{Goodman.Wallach}
R.~Goodman and N.R. Wallach.
\newblock {\em Symmetry, representations, and invariants}, volume 255 of {\em
  Graduate Texts in Mathematics}.
\newblock Springer, Dordrecht, 2009.

\bibitem[Jan03]{Jantzen}
J.C. Jantzen.
\newblock {\em Representations of algebraic groups}, volume 107 of {\em
  Mathematical Surveys and Monographs}.
\newblock American Mathematical Society, Providence, RI, second edition, 2003.

\bibitem[Jos18]{Rohit.thesis}
R.~Joshi.
\newblock {\em Spinorial Representations of Lie Groups}.
\newblock PhD thesis, Indian Institute of Science Education and Research, Pune,
  2018.

\bibitem[Jr.09]{KRV}
A.~Kirillov Jr.
\newblock {\em An introduction to {L}ie groups and {L}ie algebras}, volume 113
  of {\em Cambridge Studies in Advanced Mathematics}.
\newblock Cambridge University Press, Cambridge, 2009.

\bibitem[Kac90]{Kac}
V.G. Kac.
\newblock {\em Infinite-dimensional {L}ie algebras}.
\newblock Cambridge University Press, Cambridge, third edition, 1990.

\bibitem[Kos76]{Kostant}
B.~Kostant.
\newblock On {M}acdonald's {$\eta $}-function formula, the {L}aplacian and
  generalized exponents.
\newblock {\em Advances in Math.}, 20(2):179--212, 1976.

\bibitem[PR95]{Prasad.Ramakrishnan}
D.~Prasad and D.~Ramakrishnan.
\newblock Lifting orthogonal representations to spin groups and local root
  numbers.
\newblock {\em Proc. Indian Acad. Sci. Math. Sci.}, 105(3):259--267, 1995.

\bibitem[Sam90]{Samelson}
H.~Samelson.
\newblock {\em Notes on Lie Algebras (Universitext)}.
\newblock Springer, 2nd edition, 1 1990.

\bibitem[Spa66]{Spanier}
E.~H. Spanier.
\newblock {\em Algebraic topology}.
\newblock McGraw-Hill Book Co., New York-Toronto, Ont.-London, 1966.

\bibitem[Spr79]{Springer.Corvallis}
T.~A. Springer.
\newblock Reductive groups.
\newblock In {\em Automorphic forms, representations and {$L$}-functions
  ({P}roc. {S}ympos. {P}ure {M}ath., {O}regon {S}tate {U}niv., {C}orvallis,
  {O}re., 1977), {P}art 1}, Proc. Sympos. Pure Math., XXXIII, pages 3--27.
  Amer. Math. Soc., Providence, R.I., 1979.

\bibitem[Spr98]{Springer}
T.~A. Springer.
\newblock {\em Linear algebraic groups}, volume~9 of {\em Progress in
  Mathematics}.
\newblock Birkh\"auser Boston, Inc., Boston, MA, second edition, 1998.

\bibitem[Sta12]{ec1}
R.P. Stanley.
\newblock {\em Enumerative Combinatorics}, volume~1.
\newblock Cambridge University Press, second edition, 2012.

\bibitem[SV00]{Veldkamp}
T.~A. Springer and F.D. Veldkamp.
\newblock {\em Octonions, {J}ordan algebras and exceptional groups}.
\newblock Springer Monographs in Mathematics. Springer-Verlag, Berlin, 2000.

\end{thebibliography}

 \end{document}